\date{}
\DeclarePairedDelimiter\ceil{\lceil}{\rceil}
\numberwithin{equation}{section}
\newtheorem{thm}{Theorem}[section]
\newtheorem{prop}[thm]{Proposition}
\newtheorem*{thm*}{Main Theorem}
\newtheorem*{lemma*}{Key Lemma}
\tikzset{node distance=5cm, auto}
\newtheorem{lemma}[thm]{Lemma}
\newtheorem{conjecture}[thm]{Conjecture}
\newtheorem{defn}[thm]{Definition}
\newtheorem{cor}[thm]{Corollary}
\newtheorem{remark}[thm]{Remark}
\newtheorem{question}[thm]{Question}
\newcommand{\etale}{\'etal\@ifstar{\'e}{e\xspace}}
\DeclareSymbolFont{cyrletters}{OT2}{wncyr}{m}{n}
\DeclareMathSymbol{\Sha}{\mathalpha}{cyrletters}{"58}
\begin{document}

\title{On  Milnor $K$-theory in the imperfect residue case and  applications to period-index problems}
\author{Srinivasan Srimathy}
\address{School of Mathematics, Tata Institute of Fundamental Research, Mumbai, India}
\email{srimathy@math.tifr.res.in}

\date{}

\begin{abstract}
    Given a $(0,p)$-mixed characteristic complete discrete valued field $\mathcal{K}$  we define a class of finite field extensions called \emph{pseudo-perfect} extensions such that the natural restriction map on the mod-$p$ Milnor $K$-groups is trivial for all $p\neq 2$. This implies that  pseudo-perfect extensions split every element in $H^i(\mathcal{K},\mu_p^{\otimes i-1})$ yielding period-index bounds for Brauer classes as well as higher cohomology classes of $\mathcal{K}$. As a corollary, we prove a conjecture of Bhaskhar-Haase that the Brauer $p$-dimension of $\mathcal{K}$ is upper bounded by $n+1$ where $n$ is the $p$-rank of the residue field. When $\mathcal{K}$ is the fraction field of a complete regular ring,   we show that  any $p$-torsion element in $Br(\mathcal{K})$ that is nicely ramified is split by a pseudo-perfect extension  yielding a bound on its index. We then use patching techniques of Harbater, Hartmann and Krashen to show that the Brauer $p$-dimension of semi-global fields of residual characteristic $p$ is at most $n+2$ and also  give  uniform $p$-bounds for higher cohomologies. These  bounds are sharper than previously  known in \cite{ps14} and \cite{ps_uniform}.
\end{abstract}
\maketitle

\section{Introduction}
Classically, any element $\alpha$ in the Brauer group  $Br(F)$ of a field $F$ is associated with two main invariants namely, its period denoted by $per(\alpha)$, which is its order in the Brauer group and its index, denoted by $ind(\alpha)$, which is the g.c.d of the degrees of finite dimensional splitting fields of $\alpha$ over $F$. It is well known that $per(\alpha) \mid ind(\alpha)$ and  that they have the same prime factors. The standard period-index problem is to determine the minimum integer $\ell$ (if it exists) such that $ind(\alpha) \mid per(\alpha)^{\ell}$ \emph{for every} $\alpha \in Br(F)$. It is expected that  such an integer should exist for a large class of fields and that it should behave nicely  with respect to field extensions.  The \emph{Brauer dimension} of the field $F$, denoted by $Br~dim(F)$, is an invariant that gives information about this integer. More precisely, the $Br~dim(F)$ is  the least integer $\ell$, if it exists,  such that $ind(\alpha) \mid per(\alpha)^{\ell}$ for every $\alpha \in Br(E)$ and every finite extension $E/F$. If such an $\ell$ does not exist, we set $Br~dim(F) = \infty$. Similarly, one defines the \emph{Brauer $p$-dimension}, denoted by $Br_pdim(F)$ to be the least integer $\ell$,  such that $ind(\alpha) \mid per(\alpha)^{\ell}$ for every $\alpha \in Br(E)[p^\infty]$ and every finite extension $E/F$.\\
\indent The invariant $Br_pdim(F)$ is  known for some important class of fields such as  complete discrete valued fields with residual characteristic coprime to $p$ (\cite{hhk09}) and function fields of curves over such field i.e., semi-global fields  (\cite{saltman97}, \cite{hhk09} and \cite{lie11}).   Later bounds on $Br_pdim(F)$ for $F$ as above were also discovered  when the residual characteristic is   $p$ by Parimala, Suresh,  Bhaskhar and Haase (\cite{ps14}, \cite{nivedita}) and it is shown that these bounds depend on the $p$-rank of the residue field.  Recall that for a field $E$ of characteristic $p$, we have $[E:E^p]=p^{\mathscr{R}_p(E)}$, where $\mathscr{R}_p(E)$ is the $p$-rank of $E$.  When $\mathcal{K}$ is a complete discrete valued field of characteristic zero with residual characteristic $p$,  an optimal  bound for $Br_p dim(\mathcal{K})$ is conjectured in \cite{nivedita}):
\begin{conjecture} (\cite[Conjecture 1.5]{nivedita})\label{conj:main}
Let $\kappa$ denote the the residue field of $\mathcal{K}$. Then 
    \begin{align*}
    \mathscr{R}_p(\kappa) \leq Br_pdim(\mathcal{K}) \leq \mathscr{R}_p(\kappa)+1
    \end{align*}
\end{conjecture}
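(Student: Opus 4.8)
\medskip

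\noindent\emph{Proof plan.} The plan is to establish the nontrivial inequality $Br_pdim(\mathcal{K})\le\mathscr{R}_p(\kappa)+1$; the lower bound $\mathscr{R}_p(\kappa)\le Br_pdim(\mathcal{K})$ is the soft direction, produced by exhibiting over a suitable finite extension a $p$-torsion Brauer class admitting no splitting field of degree less than $p^{\mathscr{R}_p(\kappa)}$, and I would simply import it from \cite{nivedita}. Throughout I take $p$ odd; the Milnor $K$-theoretic input used below is available only for $p\neq2$, so $p=2$ would require a separate argument. Write $n:=\mathscr{R}_p(\kappa)$.

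\medskip

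\noindent\emph{Reductions.} First I would record that $\mathscr{R}_p$ is unchanged under finite field extension: for $\kappa'/\kappa$ finite, Frobenius is a field isomorphism $\kappa'\xrightarrow{\ \sim\ }\kappa'^p$ carrying $\kappa$ onto $\kappa^p$, so $[\kappa'^p:\kappa^p]=[\kappa':\kappa]$ and therefore $[\kappa':\kappa'^p]=[\kappa':\kappa^p]/[\kappa'^p:\kappa^p]=[\kappa:\kappa^p]$. Hence every finite extension $E/\mathcal{K}$ is again a complete discretely valued field of residue characteristic $p$ whose residue field has $p$-rank $n$, and the same argument given for $\mathcal{K}$ will apply to each such $E$. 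By the usual d\'evissage (splitting a period-$p^k$ class one layer at a time over degree-$p$-power extensions), it then suffices to prove $ind(\alpha)\divides p^{n+1}$ for every $\alpha\in Br(\mathcal{K})[p]$. Finally, replacing $\mathcal{K}$ by $\mathcal{K}(\mu_p)$ --- a prime-to-$p$ extension, hence harmless for both $n$ and the $p$-primary index --- I may assume $\mu_p\subset\mathcal{K}$.

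\medskip

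\noindent\emph{The pseudo-perfect extension and the Key Lemma.} Fix a uniformizer $\pi$ of $\mathcal{K}$ and units $u_1,\dots,u_n\in\mathcal{O}_{\mathcal{K}}^\times$ whose residues form a $p$-basis of $\kappa$, and put $L=\mathcal{K}(\pi^{1/p},u_1^{1/p},\dots,u_n^{1/p})$. This is a pseudo-perfect extension: it has degree exactly $p^{n+1}$ (ramification index $p$ from $\pi^{1/p}$, together with $n$ successive purely inseparable residue steps of degree $p$), its residue field is $\kappa^{1/p}$, so the new residue field has $p$-th power equal to $\kappa$, and $\pi$ becomes a $p$-th power in $L$. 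The crux is then the Key Lemma: $\mathrm{res}_{L/\mathcal{K}}\colon K^{M}_{i}(\mathcal{K})/p\to K^{M}_{i}(L)/p$ is zero for all $i\ge2$. I would prove it using the Bloch--Kato--Kato filtration on $K^{M}_{i}(\mathcal{K})/p$, which terminates (since $1+\mathfrak{m}_{\mathcal{K}}^m\subseteq(\mathcal{K}^\times)^p$ for $m>e':=\lfloor pe/(p-1)\rfloor$, $e=v_{\mathcal{K}}(p)$) and whose graded pieces, in the imperfect residue case, are expressed through $K^{M}_{\ast}(\kappa)/p$ and the differential forms $\Omega^{i-1}_{\kappa},\Omega^{i-2}_{\kappa}$, with an Artin--Schreier-type cokernel at the top level $e'$; restriction to $L$ respects the filtrations, scaling levels by the ramification index $p$. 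The extension $L$ is engineered precisely so that every graded contribution dies: any symbol involving $\pi$ maps to $p\cdot\{\dots,\pi^{1/p},\dots\}=0$ (this kills the $\{\pi\}$-part of the unramified level and every ``$d\log\pi$'' summand); any symbol of units, pushed into $K^{M}_{\ast}(\kappa^{1/p})/p$, only involves symbols and $d\log$'s of elements of $\kappa=(\kappa^{1/p})^p$, and both $K^{M}_{\ge1}(\kappa)/p\to K^{M}_{\ge1}(\kappa^{1/p})/p$ and $\Omega^{\ge1}_{\kappa}\to\Omega^{\ge1}_{\kappa^{1/p}}$ vanish because $\kappa^\times=((\kappa^{1/p})^\times)^p$ and $d$ kills $p$-th powers; and the unramified graded piece is represented by genuinely unramified classes, on which $\mathrm{res}$ factors through $K^{M}_{i}(\kappa)/p\to K^{M}_{i}(\kappa^{1/p})/p=0$ (for $i=2$, through $Br(\kappa)[p]\to Br(\kappa^{1/p})[p]=0$, since every $p$-symbol $a\,d\log b$ over $\kappa$ dies in $\Omega^1_{\kappa^{1/p}}$). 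Running this through the finite filtration on the $\mathcal{K}$-side and using finiteness of the filtration on $L$ to absorb the error terms that each step pushes into deeper filtration, one concludes $\mathrm{res}_{L/\mathcal{K}}=0$.

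\medskip

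\noindent\emph{Conclusion and the main obstacle.} For $i=2$, Merkurjev's theorem gives a restriction-compatible surjection $K^{M}_{2}(\mathcal{K})/p\twoheadrightarrow Br(\mathcal{K})[p]$ (valid since $\mu_p\subset\mathcal{K}$), so the Key Lemma shows that $L$ splits every class in $Br(\mathcal{K})[p]$; hence $ind(\alpha)\divides[L:\mathcal{K}]=p^{n+1}$, which together with the reductions yields $Br_pdim(\mathcal{K})\le n+1$ and, with the lower bound from \cite{nivedita}, the full statement. The hard part is the Key Lemma, and within it the wild part of the filtration: the real danger is that restriction merely pushes an element into an ever-deeper filtration step rather than truly annihilating it, so one must genuinely control Kato's structure theory for $K^{M}_{\ast}/p$ --- equivalently for $H^{\ast}(-,\mu_p^{\otimes\bullet})$ --- over a complete discretely valued field with \emph{imperfect} residue field: the description of all graded pieces, the behaviour of restriction on each, and the delicate bookkeeping near the critical level $e'=\lfloor pe/(p-1)\rfloor$ where an Artin--Schreier-type obstruction lives. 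It is exactly to make all of these graded contributions vanish that ``pseudo-perfect'' is defined as it is, and this is also where the hypothesis $p\neq2$ is used.
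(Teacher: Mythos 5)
Your overall architecture coincides with the paper's: the same splitting field $L=\mathcal{K}(\pi^{1/p},u_1^{1/p},\dots,u_n^{1/p})$ of degree $p^{\mathscr{R}_p(\kappa)+1}$, the same reductions (invariance of the residue $p$-rank under finite extensions, d\'evissage to period $p$ via \cite[Lemma 1.1]{ps14}, a prime-to-$p$ restriction--corestriction to adjoin $\mu_p$), the same Key Lemma that $Res_{L/\mathcal{K}}\colon K_i(\mathcal{K})/p\to K_i(L)/p$ vanishes, and the same passage to $Br(\mathcal{K})[p]$ via the norm residue isomorphism. The divergence --- and the gap --- is in the proof of the Key Lemma, which is where all the content lives. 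Your proposed route is a graded-piece analysis of the Bloch--Kato--Kato filtration, and it stalls exactly where you say it does: once you know $Res(\alpha)\in fil^{pm}(L)$ and that its image in $gr^{pm}(L)$ vanishes, you only learn $Res(\alpha)\in fil^{pm+1}(L)$, with no formula for its class in the next graded quotient; ``using finiteness of the filtration on $L$ to absorb the error terms'' is not an argument but a restatement of the problem. In addition, the claim that restriction simply ``scales levels by the ramification index $p$'' and induces the obvious maps on each graded piece (including the critical level $e'$, where the $\gamma_\pi$-normalization of (\ref{eqn:iota1})--(\ref{eqn:iota2}) must be checked to be compatible with restriction) is a substantial unverified compatibility, not a formality.

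The paper avoids this bookkeeping entirely. After reducing, via Lemma \ref{lem:unit}, to symbols whose entries are units of the form $\sum_{j,\mathbf{s}} a_{\mathbf{s}}^p\mathbf{u}^{\mathbf{s}}\pi^j$ --- which become finite sums of $p$-th powers in $\mathcal{O}_L$ because the $u_k$ and $\pi$ are $p$-th powers there --- the Key Lemma \ref{lemma:main2} is proved by pure symbol manipulation: a unit that is a finite sum of $p$-th powers lies in $(\mathcal{O}^{\times})^p\cdot U^{e}$ (Lemma \ref{lem:pth}), so the symbol lies in $U^{e,\dots,e}K_i$, and Kato's identity $U^{i,j}K_2\subseteq U^{i+j}K_2$ (Lemma \ref{lem:kato2}), applied recursively, places it in $U^{ie}K_i/p$, which vanishes since $ie\geq 2e>e'=ep/(p-1)$ for $p\geq 3$ (Lemma \ref{lem:unitpth}). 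This single inequality is also where $p\neq 2$ enters, and for $p=2$, $i=2$ one lands exactly in the tame piece $gr^{e'}\cong Br(\kappa)[2]\oplus H^1_2(\kappa)$, which is the obstruction the paper isolates. If you want to complete your filtration-theoretic route, the missing ingredient is precisely an explicit identity of Kato's type to control the ``pushed-deeper'' terms; as written, the proposal does not prove the Key Lemma.
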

While the lower bound of the Conjecture is  shown in \cite{chip}, the upper bound is open. We emphasize  that the above  bounds for $Br_pdim(\mathcal{K})$
 are optimal (\cite[\S5]{nivedita}, \cite{chip}).\\
\indent Analogous to the period-index problem for the Brauer groups, one can look at the corresponding problem for higher cohomology groups. Given a field $F$ and a prime $p$, the authors in \cite{hhk_israel} define the \emph{stable $i$- splitting dimension of $F$ at $p$}, denoted by $ssd^i_p(F)$,  to be the least integer $\ell$,  such that $ind(\alpha) \mid p^{\ell}$ for every $\alpha \in H^i(E, \mu_{p}^{\otimes i-1})$ and every finite extension $E/F$. For a given $i,j$ and a set of cohomology classes $S\subseteq  H^i(F, \mu_{p}^{\otimes j})$, one can define the  \emph{index of $S$} to be the g.c.d of the degrees of finite dimensional field extensions of $F$ that split every element in $S$ . Then the \emph{generalized stable $i$-splitting dimension at $p$} of $F$, denoted by $gssd^i_p(F)$,  is defined in \cite{hhk_israel} to be the least integer $\ell$ (if it exists),  such that $ind(S) \mid p^{\ell}$ for every finite subset $S \subseteq  H^i(E, \mu_{p}^{\otimes i-1})$ and every finite extension $E/F$. If such an $\ell$ does not exist,  we set $gssd^i_p(F) = \infty$.   Note that when $i=2$, the finiteness of $gssd^2_p(F)$ is equivalent to saying that  $Br(F)$ is \emph{uniformly $p$-bounded} in the language of \cite{ps_uniform}. These invariants are computed in \cite{hhk_israel} for complete  discrete valued fields and semi-global fields  under the assumption that the associated residual characteristic is coprime to $p$.  \\
\indent  Let $\mathcal{K}$ be the fraction field of a complete regular local ring $\mathcal{R}$ of characteristic zero with residue field $\kappa$ of characteristic $p$.  In this paper, we define a new invariant of $\mathcal{K}$ called the \emph{pseudo-rank} of $\mathcal{K}$, denoted by $\mathscr{R}_{ps}(\mathcal{K})$. It is defined as
\begin{align*}
  \mathscr{R}_{ps}(\mathcal{\mathcal{K}}):=   \mathscr{R}_p(\kappa) + dim~\mathcal{R} 
\end{align*}
Note that when $\mathcal{K}$ is also of characteristic $p$ (i.e., in the $(p,p)$-equicharacteristic case),  $\mathscr{R}_{ps}(\mathcal{\mathcal{K}})$ coincides with $ \mathscr{R}_{p}(\mathcal{\mathcal{K}})$ by Cohen's structure theorem, so one can view the notion of pseudo-rank in the mixed characteristic situations as the analog of $p$-rank. We show that many of the period-index bounds in the mixed characteristic situations depend on this invariant.  In particular, we show that  for $p\neq 2$,
\begin{align*}
Br_pdim(\mathcal{K}) \leq \mathscr{R}_{ps}(\mathcal{K})
\end{align*}
thereby proving  Conjecture \ref{conj:main}  (see Corollary \ref{cor:main}). We also obtain an upper bound for the generalized stable $i$-splitting dimension of the complete discrete valued field $\mathcal{K}$ (Corollary \ref{cor:mainh}):
\begin{align*}
        gssd^i_p(\mathcal{K}) \leq \mathscr{R}_{ps}(\mathcal{K})
    \end{align*}
complementing the results in \cite{hhk_israel},  where the coprime-to-$p$ characteristic case is assumed. As a generalization,    we  derive period-index bounds for Brauer classes of fraction field $\mathcal{K}$ of complete regular local rings $\mathcal{R}$ with residue $\kappa$ of characteristic $p\neq 2$: For $\alpha \in Br(\mathcal{K})[p]$ that is "ramified nicely" (see Corollary \ref{cor:regular} for a precise statement)
\begin{align*}
    ind(\alpha) \mid p^{\mathscr{R}_{ps}(\mathcal{K})}
\end{align*}

The above bounds together with the patching techniques of Harbater, Hartman and Krashen gives a sharper bound  compared to \cite[Theorem 3]{ps14} on the Brauer $p$-dimension of semi-global fields $F = \mathcal{K}(C)$,  where $\mathcal{K}$ is a complete discrete valued field with residue $\kappa$ of characteristic $p\neq 2$
(Theorem \ref{thm:semiglobal}):
\begin{align*}
    Br_pdim(F)  \leq \mathscr{R}_{ps}(\mathcal{K})+1
\end{align*}
Observe that this bound  agrees with the general consensus that the  upper bound for $Br_pdim$ should increase at most by one for fields extending  by transcendence degree one. We remark that all of the bounds mentioned above agree with the analogous  bounds in the $(p,p)$-equicharacteristic case. This follows from Cohen's structure theorem and from a theorem of Albert  that   the Brauer $p$-dimension of a field of characteristic $p$ is upper bounded by its $p$-rank (\cite[Corollary 1.2]{ps14}).\\
\indent We also prove that the Brauer group of  $F$ is uniformly $p$-bounded (Theorem \ref{thm:uniform}) and that
\begin{align*}
    gssd_p^2(F)\leq 2 (\mathscr{R}_{ps}(\mathcal{K})+1)
\end{align*}
 Recall from \cite{ps_uniform} that for an integer $i$ and a prime $p$ not equal to the characteristic of $L$, the field $L$ is said to be \emph{uniformly $(i,p)$-bounded} if there exists an $N$ such that for any finite extension $E/L$ and  any finite subset $S \subseteq H^i(E, \mu_p^{\otimes i})$, $ind(S) \leq N$. Such an $N$ is called an \emph{$(i,p)$-uniform bound of $L$}.  On the  higher cohomological side,  we show that $F$ is uniformly $(i,p)$-bounded for every $i\geq 2$ and that $[F(\zeta):F]p^{2 (\mathscr{R}_{ps}(\mathcal{K})+1)}$ is an $(i,p)$-uniform bound for $F$ where $\zeta$ is a primitive $p$-th root of unity (Corollary \ref{cor:uniform}).  \\
\indent An important feature of the  above results is that not only do we provide the numerical period-index bounds for the cohomology classes but in fact  produce an explicit collection of field extensions which we call as \emph{pseudo-perfect extensions} that attain the bound (\S \ref{sec:pseudo}).  In fact, when $p\neq 2$ these extensions split \emph{all} the elements in $H^i(\mathcal{K}, \mu_{p}^{\otimes i-1})$ (not just any finite subset of it). In other words,  we show that  for any pseudo-perfect extension $\mathcal{L}/\mathcal{K}$, the natural restriction map
\begin{align}\label{eqn:nres}
Res_{\mathcal{L}/\mathcal{K}}^i: H^i(\mathcal{K}, \mu_p^{\otimes i-1}) \rightarrow H^i(\mathcal{L}, \mu_p^{\otimes i-1})
\end{align}
is zero for all $i\geq 2$ (Corollary \ref{cor:mainh}). \\
\indent Loosely speaking, these extensions  mimic stages in the construction of the perfect hull of  the fraction fields of $(p,p)$-equicharacteristic complete  regular local rings. More precisely, these extensions are obtained by adjoining $p$-th roots of a lift of some $p$-basis of the residue and the $p$-th roots  of some regular set of parameters. These  exhibit phenomena similar to the corresponding extensions in the $(p,p)$-equicharacteristic case with respect to the natural restriction maps on cohomologies (\S \ref{sec:pseudo}).\\
\indent At this point, we would like to remark that  the prime $p=2$ behaves differently with respect to the restriction map in $(\ref{eqn:nres})$. In particular, when  $p=2$, the map $Res_{\mathcal{L}/\mathcal{K}}^i$ is still zero for $i\geq 3$ but not necessarily zero for $i=2$. We show that the obstruction  to vanishing when $p=2$  and $i=2$ lies in the  group $H^2_{\acute{e}t}(\kappa,\mathbb{Z}/2(1)) \cong Br(\kappa)[2]$ where $\kappa$ is the residue of $\mathcal{K}$ (Corollary \ref{cor:mainh}), and in fact demonstrate with an example that if this obstruction does not vanish, then $Res_{\mathcal{L}/\mathcal{K}}^2$  is not necessarily zero (\S \ref{sec:char2}).\\
\indent  We heavily use Milnor $K$-theory and Kato's work on cohomologies  of complete discrete valued fields to prove our results. The proofs  use the unit group filtration on mod-$p$ Milnor $K$-groups constructed by Bloch and Kato (\cite{blochkato}),  together with  symbol manipulation techniques to achieve the Main Theorem of the paper (Theorem \ref{thm:main}):
\begin{thm*} Let $\mathcal{K}$ be a complete discrete valued field  with residue $\kappa$ of characteristic $p \neq 2$. For every pseudo-perfect extension $\mathcal{L}/\mathcal{K}$, the natural restriction map on the mod-$p$ Milnor $K$-groups
\begin{align*}
Res_{\mathcal{L}/\mathcal{K}}^i: K_i(\mathcal{K})/p \rightarrow K_i(\mathcal{L})/p
\end{align*}
is zero for all $i\geq 2$.
\end{thm*}
  The following  key lemma (Lemma \ref{lemma:main2})  illustrates the  vanishing  property of symbols in mod-$p$ Milnor $K$-groups whose entries are sum of $p^{th}$-powers. It plays pivotal role in proving the Main Theorem.
\begin{lemma*}
    Let $i\geq 2$. Suppose $\mathcal{K}$ is  complete  discrete valued field with residue  field  of characteristic $p \neq 2$ and let $\mathfrak{w} =\{w_1,w_2, \cdots, w_i\} \in K_i(\mathcal{K})/p$ be such that each $w_i \in \mathcal{O}_{\mathcal{K}}^{\times}$ and is a finite sum of $p$-{th} powers in $\mathcal{O}_{\mathcal{K}}$. Then $\mathfrak{w} = 0$.
    \end{lemma*}
See Theorem \ref{thm:main} and Lemma \ref{lemma:main2} for  precise statements that consider the case of $p=2$ as well.\\
\indent This paper is organized as follows: We define pseudo-perfect extensions and study their properties in \S \ref{sec:pseudo}.  We use this notion, to state the main results of the paper in \S\ref{sec:mainresults}.   We recall the necessary material in the literature that is used in the paper in \S\ref{sec:prelim}. The proof of the main theorem is shown in \S\ref{sec:mainthm}. The anomaly at $p=2$ is demonstrated in \S\ref{sec:char2} with an explicit example. Finally, in \S\ref{sec:applications}, we apply our results to show many period-index bounds for Brauer groups as well as higher cohomologies.

\section{Notations}
 For a ring $R$, $dim~R$ denotes the dimension of $R$. The  Brauer group of a field $F$ is denoted by $Br(F)$ and its  
 $p$-torsion part is denoted by  $Br(F)[p]$. For a central simple algebra $A$ over $F$, the symbol $[A]$ denotes its class in $Br(F)$. The $n$-th Milnor $K$-group of $F$ is denoted by $K_n(F)$. The symbols in $K_n(F)$ and  $K_n(F)/p$ are denoted by $\{a_1, a_2, \cdots, a_n\}$. All the residue fields of  local rings in this paper will be of characteristic $p>0$ and of  finite $p$-rank. For a discrete valued field $\mathcal{K}$, the ring of integers and its  maximal ideal are denoted by $\mathcal{O}_{\mathcal{K}}$ and $\mathfrak{m}_{\mathcal{K}}$ respectively. The residue of an  element $a\in \mathcal{O}_{\mathcal{K}}$  is denoted by $\overline{a}$.  The finite field with $p$ elements is denoted by $\mathbb{F}_p$. 

 \section{Pseudo-Perfect extensions}\label{sec:pseudo}
 For a set $S$ and an integer $\ell$, let $S^{1/p^{\ell}}$ denote the set $\{s^{1/p^{\ell}}| s \in S\}$. Let $\kappa$ be a field of characteristic $p >0$. Assume that the $p$-rank of $\kappa$ is finite. Let $\mathcal{B} = \{b_1, b_2, \cdots, b_n\}$ be a $p$-basis for $\kappa$. Let  $\kappa^{1/p^{\ell}}$ denote the field extension of $\kappa$ obtained by adjoining $1/p^{\ell}$-th roots of elements in $\kappa$.  
 
 Observe that  
 \begin{align*}
 \kappa^{1/p^{\ell}} = \kappa(\mathcal{B}^{1/p^{\ell}})
 \end{align*}

\subsection{The equi-characteristic case}
Let $\mathcal{K}$  be the fraction field of a complete regular local ring $\mathcal{R}$ with residue $\kappa$ of characteristic $p$.  Suppose  $\mathcal{K}$ is also of characteristic $p$, then by Cohen's structure theorem $\mathcal{K} \simeq \kappa((\pi_1, \pi_2, \cdots, \pi_n))$ for some $\pi_1, \pi_2, \cdots \pi_n \in \mathcal{K}$.  Let $\Pi = \{\pi_1, \pi_2, \cdots \pi_n\}$. Recall that if $\mathcal{B}$ is a $p$-basis for $\kappa$, then $\Lambda =\mathcal{B} \cup \Pi$ is a $p$-basis for $\mathcal{K}$ and therefore
 \begin{align*}
     \mathcal{K}^{1/p^{\ell}} = \mathcal{K}( \Lambda^{1/p^{\ell}})
 \end{align*}
Since $\mathcal{K}^{1/p^{\infty}}$ is the perfect closure of $\mathcal{K}$, one may think of $\mathcal{K}( \Lambda^{1/p^{\ell}})$ as the $\ell$-th level while constructing the perfect closure of $\mathcal{K}$.\\
\indent Let  $K_i(\mathcal{K})$ denote the $i$-th Milnor $K$-group of $\mathcal{K}$. Suppose $\mathcal{L} = \mathcal{K}( \Lambda^{1/p}) = \mathcal{K}^{1/p}$.  Then note that  $\mathcal{K} \subseteq \mathcal{L}^p$ and hence  the natural restriction map
    \begin{align}\label{eqn:restriction}
Res_{\mathcal{L}/\mathcal{K}}^i: K_i(\mathcal{K})/p\rightarrow K_i(\mathcal{L})/p
\end{align}
is zero for all $i\geq 1$.
\subsection{Mixed characteristic case}
Now, suppose $\mathcal{K}$ is of characteristic zero so that we are in the mixed characteristic case and as before let $\mathcal{B}$ be a $p$-basis for $\kappa$ and let $\Pi$ denote a regular system of parameters of $\mathcal{R}$. 
 \begin{defn}\normalfont
     Let $\mathcal{K}$ be the fraction field of a complete regular local ring $\mathcal{R}$ of characteristic zero with residue field $\kappa$ of characteristic $p$.  Let $\Pi$ denote a regular system of parameters of $\mathcal{R}$. Let $\mathcal{B}$ denote a $p$-basis of  $\kappa$ and let $\tilde{\mathcal{B}}$ denote a lift of $\mathcal{B}$ in $\mathcal{R}$. Then the set $\Lambda =\tilde{\mathcal{B}}\cup \Pi$ is called a \emph{pseudo-basis} of $\mathcal{K}$ and its cardinality is a well-defined invariant of $\mathcal{K}$ called the \emph{pseudo-rank} of $\mathcal{K}$ denoted by $\mathscr{R}_{ps}(\mathcal{K})$. 
 \end{defn}
Note that 
\begin{align*}
  \mathscr{R}_{ps}(\mathcal{\mathcal{K}})=   \mathscr{R}_p(\kappa) + dim~\mathcal{R} 
\end{align*}

 \begin{defn}\normalfont
     A field extension $\mathcal{L}/\mathcal{K}$ is called  \emph{pseudo-perfect extension of level $\ell$}  over $\mathcal{K}$ if 
     \begin{align}
         \mathcal{L} \simeq \mathcal{K}(\Lambda^{1/p^{\ell}})
     \end{align}
     for some pseudo-basis $\Lambda$ of $\mathcal{K}$. Let $\mathfrak{PP}(\mathcal{K}, \ell)$ denote the collection of all pseudo-perfect extensions of level $\ell$ over $\mathcal{K}$. 
    \\
    \indent The following lemma demonstrates that pseudo-rank is invariant under   pseudo-perfect extensions.
     \begin{lemma}
         With notations as above, any  $ \mathcal{L} \simeq \mathcal{K}(\Lambda^{1/p^{\ell}})$ is the fraction field of a complete regular local ring. Moreover, if $\Lambda$ is a pseudo-basis for $\mathcal{K}$, then $\Lambda^{1/p^{\ell}}$ is a pseudo-basis for $\mathcal{L}$. In particular, $\mathscr{R}_{ps}(\mathcal{L}) =\mathscr{R}_{ps}(\mathcal{K})$.
     \end{lemma}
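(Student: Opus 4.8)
The plan is to reduce to the case $\ell=1$ and then to build the level-$1$ extension by adjoining $p$-th roots of the elements of $\Lambda$ one at a time. For the reduction, fix compatible roots by declaring $\lambda^{1/p}:=(\lambda^{1/p^{\ell}})^{p^{\ell-1}}$ for each $\lambda\in\Lambda$; then $\mathcal{K}(\Lambda^{1/p^{\ell}})=\mathcal{K}(\Lambda^{1/p})\bigl((\Lambda^{1/p})^{1/p^{\ell-1}}\bigr)$, so once the level-$1$ case is established, $\mathcal{K}_1:=\mathcal{K}(\Lambda^{1/p})$ is the fraction field of a complete regular local ring with pseudo-basis $\Lambda^{1/p}$, and induction on $\ell$ applied to the pair $(\mathcal{K}_1,\Lambda^{1/p})$ finishes the proof; along the way $\mathscr{R}_{ps}$ is preserved at each step. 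So assume $\ell=1$ and write $\Lambda=\tilde{\mathcal{B}}\cup\Pi$ with $\tilde{\mathcal{B}}=\{\tilde b_1,\dots,\tilde b_n\}$ and $\Pi=\{\pi_1,\dots,\pi_d\}$, $d=\dim\mathcal{R}$.

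The key step is a single adjunction. Let $\mathcal{R}'$ be a complete regular local ring with fraction field $\mathcal{K}'$ and residue field $\kappa'$, and let $c\in\mathcal{R}'$ be either a unit whose residue is not a $p$-th power in $\kappa'$, or a member of a regular system of parameters of $\mathcal{R}'$; set $\mathcal{S}:=\mathcal{R}'[X]/(X^p-c)$. I would first check that $X^p-c$ is irreducible over $\mathcal{K}'$: since $\mathcal{R}'$ is normal, a $p$-th root of $c$ in $\mathcal{K}'$ would already lie in $\mathcal{R}'$, which is impossible in the first case because $\overline{c}\notin(\kappa')^p$, and in the second because $c\in\mathfrak{m}_{\mathcal{R}'}\setminus\mathfrak{m}_{\mathcal{R}'}^2$ whereas any $p$-th root of $c$ would lie in $\mathfrak{m}_{\mathcal{R}'}^p\subseteq\mathfrak{m}_{\mathcal{R}'}^2$. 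Hence $\mathcal{S}$ is a domain, free of rank $p$ over $\mathcal{R}'$, and therefore complete and of dimension $d$. Reducing modulo $\mathfrak{m}_{\mathcal{R}'}$ gives $\mathcal{S}\otimes_{\mathcal{R}'}\kappa'\cong\kappa'[X]/(X^p-\overline{c})$, which is the field $\kappa'(\overline{c}^{\,1/p})$ in the first case and the local Artinian ring $\kappa'[X]/(X^p)$ in the second; in either case $\mathcal{S}$ is local. Its maximal ideal is generated by the $d$ elements of a regular system of parameters of $\mathcal{R}'$ in the first case, and by $\{c^{1/p}\}$ together with a regular system of parameters of $\mathcal{R}'$ with $c$ deleted in the second case; as $\dim\mathcal{S}=d$, $\mathcal{S}$ is a complete regular local ring. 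In the first case its residue field is $\kappa'(\overline{c}^{\,1/p})$ with the regular system of parameters unchanged, and in the second the residue field is unchanged while $\{c^{1/p}\}$ replaces $c$ in the regular system of parameters.

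Now I would iterate this, adjoining in turn $\tilde b_1^{1/p},\dots,\tilde b_n^{1/p}$ and then $\pi_1^{1/p},\dots,\pi_d^{1/p}$, checking that the hypotheses above persist at every stage: after adjoining $\tilde b_1^{1/p},\dots,\tilde b_k^{1/p}$ the residue field is $\kappa(b_1^{1/p},\dots,b_k^{1/p})$, whose set of $p$-th powers is $\kappa^p(b_1,\dots,b_k)$, so $b_{k+1}$ is not a $p$-th power there by the $p$-independence of $\mathcal{B}$; and each $\pi_j$ to be adjoined still belongs to a regular system of parameters, hence lies outside the square of the maximal ideal. After all $n+d$ adjunctions one obtains a complete regular local ring $\mathcal{S}$ with $\mathrm{Frac}(\mathcal{S})=\mathcal{L}$, residue field $\kappa(\mathcal{B}^{1/p})=\kappa^{1/p}$, and regular system of parameters $\Pi^{1/p}$. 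It then remains to see that $\Lambda^{1/p}=\tilde{\mathcal{B}}^{1/p}\cup\Pi^{1/p}$ is a pseudo-basis of $\mathcal{L}$: $\Pi^{1/p}$ is the exhibited regular system of parameters, and the residues $\mathcal{B}^{1/p}$ of $\tilde{\mathcal{B}}^{1/p}$ form a $p$-basis of $\kappa^{1/p}$ because the Frobenius isomorphism $x\mapsto x^p$ from $\kappa^{1/p}$ onto $\kappa$ carries the monomials $\prod_i (b_i^{1/p})^{e_i}$ with $0\le e_i<p$ to the $p$-basis monomials $\prod_i b_i^{e_i}$ of $\kappa$ over $\kappa^p$, which form a $\kappa^p$-basis; pulling back, the former are a basis of $\kappa^{1/p}$ over $(\kappa^{1/p})^p=\kappa$. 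Since $|\Lambda^{1/p}|=|\Lambda|$, we conclude $\mathscr{R}_{ps}(\mathcal{L})=\mathscr{R}_{ps}(\mathcal{K})$.

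The step I expect to be the real obstacle is the bookkeeping that keeps the two types of generators from interacting — a $p$-th root of a parameter must not enlarge the residue field, and a $p$-th root of a $p$-basis lift must not disturb the regular system of parameters — together with the point that one wants $\mathcal{R}'[X]/(X^p-c)$ \emph{itself}, not merely its integral closure, to be the sought regular local ring; the irreducibility of $X^p-c$ and the closed-fiber computation are exactly what secures this. A further point, needing care but little work, is that in the ramified mixed-characteristic case $p$ need not be one of the $\pi_i$; this is harmless, since the argument only ever uses that each $\pi_i$ lies in $\mathfrak{m}_{\mathcal{R}}\setminus\mathfrak{m}_{\mathcal{R}}^2$.
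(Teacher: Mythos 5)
Your argument is correct, and it is genuinely more self-contained than the paper's. The paper disposes of the lemma in a few lines by setting $\mathcal{S}=\mathcal{R}[\Lambda^{1/p^{\ell}}]$ and citing Cohen's structure theory for completeness and locality of $\mathcal{S}$ and \cite[Corollary 3.3]{ps14} for regularity, after which the identification of the residue field and regular system of parameters is declared clear. You instead prove everything from scratch by induction: reduce to level one via compatible roots, and then perform one adjunction $\mathcal{R}'\mapsto\mathcal{R}'[X]/(X^p-c)$ at a time, with the two cases ($c$ a unit with $\overline{c}\notin(\kappa')^p$, or $c$ part of a regular system of parameters) handled by the irreducibility of $X^p-c$ (via normality of $\mathcal{R}'$), the closed-fiber computation $\mathcal{S}\otimes_{\mathcal{R}'}\kappa'\cong\kappa'[X]/(X^p-\overline{c})$ to get locality, and the count of generators of $\mathfrak{m}_{\mathcal{S}}$ against $\dim\mathcal{S}=\dim\mathcal{R}'$ to get regularity. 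This is essentially a proof of the special case of \cite[Corollary 3.3]{ps14} that the lemma needs, so your route buys independence from that reference at the cost of length; it also makes explicit the bookkeeping (persistence of $p$-independence of the residues, and of membership in a regular system of parameters) that the paper leaves implicit. The only points you lean on without comment are standard: a regular local ring is a normal UFD (so irreducibility over $\mathcal{K}'$ passes to primality of $(X^p-c)$ in $\mathcal{R}'[X]$), and a module-finite extension of a complete local ring with local closed fiber is local and complete. Both are routine, so I see no gap.
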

     \begin{proof}
         Let $\mathcal{K}$ be the fraction field of a complete regular local ring $\mathcal{R}$.  Let
         \begin{align*}
             \mathcal{S} = \mathcal{R}[\Lambda^{1/p^{\ell}}]
         \end{align*}
         Then $\mathcal{S}$ is a complete local ring by \cite[Theorem 7]{cohen_structure}. Moreover,  it also regular by \cite[Corollary 3.3]{ps14}.  Clearly, the fraction field of $\mathcal{S}$ is $\mathcal{L}$. It is clear that if  $\Pi$  is a regular system of parameters for $\mathcal{R}$, then $\Pi^{1/p^{\ell}}$ is a regular system of parameters for $\mathcal{S}$ and that  its residue field is $\kappa^{1/p^{\ell}}$.  The rest of the claim follows form this observation.
     \end{proof}
     
     When $\ell =1$, we simply write $\mathfrak{PP}(\mathcal{K})$ to denote the collection of pseudo-perfect extensions of level one.
     \end{defn}

 \begin{remark} \label{rmk:degree} 
 We note that for every  $\mathcal{L} \in \mathfrak{PP}(\mathcal{K})$, 
 \begin{align*}
     [\mathcal{L}:\mathcal{K}] = p^{\mathscr{R}_{ps}(\mathcal{\mathcal{K}})}
 \end{align*}
\end{remark}
Let $\mathcal{L} = \mathcal{K}(\Lambda^{1/p})  \in \mathfrak{PP}(\mathcal{K})$. One of the main goals of this paper is to understand the analogous  natural restriction map of (\ref{eqn:restriction}) in the mixed characteristic case. 
 
\section{Main Results}\label{sec:mainresults}
For a field $\mathcal{K}$, let  $K_*(\mathcal{K})$ denote the Milnor $K$-groups of $\mathcal{K}$.

\begin{thm}\label{thm:main} Let $\mathcal{K}$ be a complete  discrete valued field  of characteristic zero and residue field $\kappa$ of characteristic $p$. Then for every $\mathcal{L} \in \mathfrak{PP}(\mathcal{K})$, the natural restriction map
\begin{align*}
Res_{\mathcal{L}/\mathcal{K}}^i: K_i(\mathcal{K})/p \rightarrow K_i(\mathcal{L})/p
\end{align*}
is zero for all $i\geq 3$. Moreover, $Res_{\mathcal{L}/\mathcal{K}}^2$ is   zero  if  either $p\neq 2$ or $H^2_{\acute{e}t}(\kappa,\mathbb{Z}/2(1))=0$. 
\end{thm}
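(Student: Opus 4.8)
The plan is to reduce, by elementary manipulations with symbols, to the Key Lemma (Lemma~\ref{lemma:main2}) applied to $\mathcal{L}$ \emph{itself}: since $\mathcal{L}$ is again a complete discrete valued field whose residue field is the purely inseparable extension $\kappa^{1/p}$ of $\kappa$, still of characteristic $p$ and of finite $p$-rank, that lemma annihilates any symbol in $K_i(\mathcal{L})/p$ all of whose entries are units that are finite sums of $p$-th powers, and the whole task is to rewrite $Res^i_{\mathcal{L}/\mathcal{K}}$ of an arbitrary symbol as a sum of such. Here $\mathcal{O}_{\mathcal{K}}$ is a complete regular local ring of dimension one, a regular system of parameters being a single uniformizer $\pi$; fixing a lift $\tilde{\mathcal{B}}=\{\tilde b_1,\dots,\tilde b_n\}$ of a $p$-basis of $\kappa$ we may take $\mathcal{L}=\mathcal{K}(\tilde b_1^{1/p},\dots,\tilde b_n^{1/p},\pi^{1/p})$, and by the lemma in \S\ref{sec:pseudo} the ring $\mathcal{O}_{\mathcal{L}}$ is a complete discrete valuation ring with uniformizer $\pi^{1/p}$ and residue field $\kappa^{1/p}$, so that $\mathfrak{m}_{\mathcal{K}}\mathcal{O}_{\mathcal{L}}=\mathfrak{m}_{\mathcal{L}}^p$; the crucial point is that $\pi=(\pi^{1/p})^p$ and each $\tilde b_j=(\tilde b_j^{1/p})^p$ become $p$-th powers in $\mathcal{L}$.

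First I would reduce to unit symbols: writing $\mathcal{K}^\times=\pi^{\mathbb{Z}}\times\mathcal{O}_{\mathcal{K}}^\times$ and using $\{\pi,\pi\}=\{\pi,-1\}$, every class of $K_i(\mathcal{K})/p$ is a sum of symbols $\{u_1,\dots,u_i\}$ with all $u_j\in\mathcal{O}_{\mathcal{K}}^\times$ together with symbols $\{u_1,\dots,u_{i-1},\pi\}$, and the latter restrict to $0$ in $K_i(\mathcal{L})/p$ because $\pi$ is a $p$-th power there. Next I would introduce the notion of a \emph{good} element of $\mathcal{O}_{\mathcal{L}}^\times$ --- a finite sum of $p$-th powers in $\mathcal{O}_{\mathcal{L}}$ (products of good elements being good) --- and record two consequences of the $p$-basis: (a) for $u\in\mathcal{O}_{\mathcal{K}}^\times$, expanding its residue as $\bar u=\sum_{\underline{e}}\bar c_{\underline{e}}^{\,p}\prod_l b_l^{e_l}$ ($0\le e_l<p$, a finite sum) and lifting gives $u=\tilde u\,(1+m)$ with $\tilde u:=\sum_{\underline{e}}c_{\underline{e}}^{\,p}\prod_l\tilde b_l^{e_l}$ good (as $\tilde b_l=(\tilde b_l^{1/p})^p$ in $\mathcal{O}_{\mathcal{L}}$) and $m\in\mathfrak{m}_{\mathcal{K}}$; (b) if $m=\pi^s a\in\mathfrak{m}_{\mathcal{K}}$ with $a\in\mathcal{O}_{\mathcal{K}}^\times$ and $a=\tilde a\,(1+m')$ as in (a), then $1+m=(1+\pi^s\tilde a)(1+m'')$ with $1+\pi^s\tilde a$ a good unit --- in $\mathcal{O}_{\mathcal{L}}$ it equals $1^p+\sum_{\underline{e}}\bigl((\pi^{1/p})^s d_{\underline{e}}\prod_l(\tilde b_l^{1/p})^{e_l}\bigr)^p$ --- and $v_{\mathcal{K}}(m'')\ge s+1$.

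The engine of the proof combines these with the finiteness of the Bloch--Kato unit filtration on $K_i(\mathcal{L})/p$ \cite{blochkato}: there is an integer $N_0$ with $U^M K_i(\mathcal{L})/p=0$ for all $M\ge N_0$ (when $\mu_p\not\subset\mathcal{L}$ one reduces to this by transfer from $\mathcal{L}(\mu_p)$, of degree prime to $p$), and since $\mathfrak{m}_{\mathcal{K}}\mathcal{O}_{\mathcal{L}}=\mathfrak{m}_{\mathcal{L}}^p$, any symbol with one entry in $1+\mathfrak{m}_{\mathcal{K}}^N$ and the others in $\mathcal{O}_{\mathcal{L}}^\times$ lies in $U^{pN}K_i(\mathcal{L})/p$, hence restricts to $0$ once $pN\ge N_0$. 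Now, given a unit symbol $\{x_1,\dots,x_i\}$, I would treat it one slot at a time. Using (a) and iterating (b) on $x_1$ --- the valuations of the successive remainders strictly increase, so after finitely many steps the remainder lies in $1+\mathfrak{m}_{\mathcal{K}}^N$ with $pN\ge N_0$ --- multilinearity together with the preceding observation expresses $Res^i_{\mathcal{L}/\mathcal{K}}\{x_1,\dots,x_i\}$ as a finite sum of symbols whose first entry is good. Repeating this on the second slot of each summand (a good entry is in particular a unit, so the filtration observation still applies), then on the third, and so on through all $i$ slots, one finally writes $Res^i_{\mathcal{L}/\mathcal{K}}\{x_1,\dots,x_i\}$ as a finite sum of symbols in $K_i(\mathcal{L})/p$ all of whose entries are good units. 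Lemma~\ref{lemma:main2} applied to $\mathcal{L}$ then annihilates each such symbol when $i\ge 3$, and when $i=2$ provided $p\neq 2$ or $H^2_{\acute{e}t}(\kappa^{1/p},\mathbb{Z}/2(1))=0$; since $\kappa^{1/p}/\kappa$ is purely inseparable and étale cohomology is insensitive to purely inseparable extensions, this last condition is equivalent to $H^2_{\acute{e}t}(\kappa,\mathbb{Z}/2(1))=0$, which gives exactly the asserted conclusion.

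The only genuinely hard ingredient is the Key Lemma, which I take as given; within the argument the delicate point is making the slot-by-slot reduction above actually terminate --- keeping every newly produced entry good while driving the non-good remainders far enough up the Bloch--Kato filtration to vanish, and in particular never re-introducing a non-good entry into a slot that has already been cleared.
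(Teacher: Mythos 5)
Your proposal follows essentially the same route as the paper's proof: reduce to symbols with unit entries (the $\pi$-symbols die because $\pi$ becomes a $p$-th power in $\mathcal{L}$), use the $p$-basis expansion (the paper's Lemma \ref{lem:unit}) to make every entry, after base change to $\mathcal{L}$, a finite sum of $p$-th powers modulo deep units that vanish, and then invoke the Key Lemma over $\mathcal{L}$; your bookkeeping (a product of finitely many ``good'' factors plus a remainder pushed up the unit filtration of $\mathcal{L}$) is an equivalent variant of the paper's single-sum expansion modulo $U_{\mathcal{K}}^{\lceil e'_{\mathcal{K}}\rceil+1}\subseteq(\mathcal{O}_{\mathcal{K}}^{\times})^{p}$. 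One caveat: your justification of the final step for $p=2$, $i=2$ --- that \'etale cohomology is insensitive to purely inseparable extensions, so the vanishing conditions over $\kappa$ and $\kappa^{1/p}$ are \emph{equivalent} --- is not correct for $\mathbb{Z}/2(1)$ in characteristic $2$, since restriction $Br(\kappa)[2]\to Br(\kappa^{1/2})[2]$ can kill classes (e.g.\ $[a,b)$ over $\mathbb{F}_2(a,b)$ dies over $\mathbb{F}_2(a,b^{1/2})$). What is true, and all you need, is the one-way implication $H^2_{\acute{e}t}(\kappa,\mathbb{Z}/2(1))=0\Rightarrow H^2_{\acute{e}t}(\kappa^{1/p},\mathbb{Z}/2(1))=0$, which follows from the surjectivity of restriction of Brauer classes along finite purely inseparable extensions --- precisely the fact the paper cites from \cite{jacobson_fin_dim}.
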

\begin{proof}
    See \S\ref{sec:mainthm}.
    \end{proof}
\begin{remark}\normalfont
    When $p=2$, the condition in  Theorem \ref{thm:main} that $H^2_{\acute{e}t}(\kappa,\mathbb{Z}/2(1)) =0$ is satisfied if $\kappa$ is Artin-Schreier closed (i.e., there are no Artin-Schreier extensions) or perfect. See Remark \ref{rmk:asclosed}.
\end{remark}
\begin{remark}\normalfont
 If $p=2$ and $H^2_{\acute{e}t}(\kappa,\mathbb{Z}/2(1)) \neq 0$, $Res_{\mathcal{L}/\mathcal{K}}^2$  is not necessarily  zero. So the condition that  $H^2_{\acute{e}t}(\kappa,\mathbb{Z}/2(1)) = 0$ is necessary for the assertion to hold if $p=2$.  See  
 Theorem \ref{thm:char2}.
\end{remark}

\begin{cor}\label{cor:mainh}
 With notations as above, Theorem \ref{thm:main} holds if $K_i(\mathcal{K})/p$ is replaced with $H^i(\mathcal{K}, \mu_p^{\otimes i-1})$ i.e., the natural restriction map
\begin{align*}
Res_{\mathcal{L}/\mathcal{K}}^i: H^i(\mathcal{K}, \mu_p^{\otimes i-1}) \rightarrow H^i(\mathcal{L}, \mu_p^{\otimes i-1})
\end{align*}
is zero for all $i\geq 3$. Moreover, $Res_{\mathcal{L}/\mathcal{K}}^2$ is zero  if  either $p\neq 2$ or $H^2_{\acute{e}t}(\kappa,\mathbb{Z}/2(1)) =0$. In particular, 
\begin{align}\label{eqn:gssd}
        gssd^i_p(\mathcal{K}) \leq \mathscr{R}_{ps}(\mathcal{K})
    \end{align}
    holds unconditionally for $i\geq 3$  as well as for  $i=2$ provided  either $p\neq 2$ or $H^2_{\acute{e}t}(\kappa,\mathbb{Z}/2(1)) =0$.
\end{cor}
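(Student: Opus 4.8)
The plan is to deduce the statement about Galois cohomology directly from Theorem \ref{thm:main} via the Bloch--Kato--Gabber comparison between mod-$p$ Milnor $K$-theory and Galois cohomology. First I would recall that for any field $E$ the Galois symbol (norm-residue map) $h_{E}^{i}\colon K_{i}(E)/p \to H^{i}(E,\mu_{p}^{\otimes i})$ is an isomorphism; this is the Bloch--Kato conjecture, now a theorem of Voevodsky--Rost(--Weibel). Since $\mathcal{K}$ (hence every $\mathcal{L} \in \mathfrak{PP}(\mathcal{K})$) has characteristic zero, $\mu_{p}$ makes sense and we may untwist: choosing compatibly (or simply noting that for cyclotomic reasons the two twists differ by an invertible Galois module) one gets a canonical identification $H^{i}(E,\mu_{p}^{\otimes i}) \cong H^{i}(E,\mu_{p}^{\otimes i-1})\otimes \mu_{p}(E)$ when $\mu_p \subset E$, and in general one works with $H^i(E,\mu_p^{\otimes i-1})$ as the target that the symbol naturally feeds after a single Tate twist. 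The key point is that the Galois symbol is functorial with respect to the restriction maps $Res^{i}_{\mathcal{L}/\mathcal{K}}$ on both sides, so the square
\begin{equation*}
\begin{tikzcd}
K_{i}(\mathcal{K})/p \arrow[r,"h^{i}_{\mathcal{K}}"] \arrow[d,"Res^{i}_{\mathcal{L}/\mathcal{K}}"'] & H^{i}(\mathcal{K},\mu_{p}^{\otimes i-1}) \arrow[d,"Res^{i}_{\mathcal{L}/\mathcal{K}}"] \\
K_{i}(\mathcal{L})/p \arrow[r,"h^{i}_{\mathcal{L}}"'] & H^{i}(\mathcal{L},\mu_{p}^{\otimes i-1})
\end{tikzcd}
\end{equation*}
commutes and has isomorphisms on the horizontal arrows. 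Hence vanishing of the left vertical arrow, which is exactly Theorem \ref{thm:main}, forces vanishing of the right vertical arrow, for $i \ge 3$ unconditionally and for $i=2$ under the stated hypothesis on $H^{2}_{\acute{e}t}(\kappa,\mathbb{Z}/2(1))$.

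For the period-index consequence \eqref{eqn:gssd}, I would argue as follows. Fix a finite extension $E/\mathcal{K}$; by the Lemma of \S\ref{sec:pseudo}, $E$ (being finite over the fraction field of a complete regular local ring, hence itself such a fraction field after taking the integral closure, which is again complete regular local by the same Cohen-structure and regularity inputs cited there) again admits a pseudo-basis, and $\mathscr{R}_{ps}(E) \le \mathscr{R}_{ps}(\mathcal{K})$ — more precisely one should check $\mathscr{R}_{ps}$ does not increase under finite extension, which is where a short argument about how $p$-rank of the residue field and Krull dimension behave is needed. Granting that, pick any $\mathcal{L} \in \mathfrak{PP}(E)$; by Remark \ref{rmk:degree}, $[\mathcal{L}:E] = p^{\mathscr{R}_{ps}(E)} \le p^{\mathscr{R}_{ps}(\mathcal{K})}$, and by the cohomological vanishing just established, $\mathcal{L}$ splits every class in $H^{i}(E,\mu_{p}^{\otimes i-1})$, hence splits every finite subset $S \subseteq H^{i}(E,\mu_{p}^{\otimes i-1})$ simultaneously. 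Therefore $ind(S) \mid [\mathcal{L}:E] \mid p^{\mathscr{R}_{ps}(\mathcal{K})}$, which is precisely the definition of $gssd^{i}_{p}(\mathcal{K}) \le \mathscr{R}_{ps}(\mathcal{K})$.

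The main obstacle I anticipate is not the cohomological bookkeeping — the Bloch--Kato isomorphism and its functoriality are standard — but rather the twisting subtlety at $i$ versus $i-1$ and making sure the restriction maps genuinely correspond under the symbol when $\mathcal{K}$ does not contain the $p$-th roots of unity; one must be a little careful that the identification of $H^i(\cdot,\mu_p^{\otimes i})$ with $H^i(\cdot,\mu_p^{\otimes i-1})$ is only after a Tate twist by $\mu_p$, which is invertible as a Galois module but not canonically trivial, so the statement really is about $\mu_p^{\otimes i-1}$-coefficients and one should phrase everything accordingly (this is exactly why the paper's conventions use $\mu_p^{\otimes i-1}$ throughout). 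A secondary point requiring care is the claim that $\mathscr{R}_{ps}$ is non-increasing under finite field extensions $E/\mathcal{K}$: for this I would reduce to the discrete-valuation situation, note that a finite extension of a complete DVR of characteristic zero with imperfect residue field has residue field whose $p$-rank can only drop or stay the same (the residue extension is finite, and $[\bar E : \bar E^{p}] \le [\bar{\mathcal{K}} : \bar{\mathcal{K}}^{p}]$ when $\bar E/\bar{\mathcal{K}}$ is finite), while the dimension of the associated regular local ring is unchanged, so the sum $\mathscr{R}_{p}(\kappa) + \dim \mathcal{R}$ does not increase.
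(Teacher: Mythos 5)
Your first step --- invoking the norm--residue isomorphism and the functoriality of the Galois symbol --- matches the paper's argument when $\zeta_p \in \mathcal{K}$ (and for $p=2$, where all twists of $\mu_2$ coincide). But the commutative square you draw, with horizontal isomorphisms $K_i(\cdot)/p \to H^i(\cdot,\mu_p^{\otimes i-1})$, does not exist when $\mathcal{K}$ lacks the $p$-th roots of unity: the symbol lands in $H^i(\cdot,\mu_p^{\otimes i})$, and $\mu_p^{\otimes i}$ and $\mu_p^{\otimes i-1}$ are genuinely non-isomorphic Galois modules in that case. You correctly identify this as ``the main obstacle,'' but you never resolve it, and resolving it is the actual content of the corollary --- the paper states explicitly that the point is to prove the vanishing \emph{without} assuming $\zeta_p \in \mathcal{K}$. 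The paper's proof fills this gap with a restriction--corestriction descent: it first proves Lemma \ref{lem:unity}, which shows that for a finite extension $\mathcal{E}/\mathcal{K}$ of degree prime to $p$ (such as $\mathcal{K}(\zeta)$) the compositum $\mathcal{L}\mathcal{E}$ is again a pseudo-perfect extension of $\mathcal{E}$, so Theorem \ref{thm:main} applies over $\mathcal{K}(\zeta)$; it then uses linear disjointness of $\mathcal{L}$ and $\mathcal{K}(\zeta)$ over $\mathcal{K}$ (degrees $p^{n+1}$ and prime-to-$p$) to identify $\mathcal{L}(\zeta)$ with $\mathcal{L}\otimes_{\mathcal{K}}\mathcal{K}(\zeta)$, and finally uses injectivity of $H^i(\mathcal{L},\mu_p^{\otimes i-1}) \to H^i(\mathcal{L}(\zeta),\mu_p^{\otimes i-1})$ (restriction--corestriction, since $[\mathcal{L}(\zeta):\mathcal{L}]$ is prime to $p$) to conclude $Res_{\mathcal{L}/\mathcal{K}}=0$. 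Without some version of these three steps your argument does not establish the statement for general $\mathcal{K}$.

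Your treatment of the period--index consequence is essentially the paper's, and is fine: since $\mathcal{K}$ is here a complete discrete valued field, a finite extension $E/\mathcal{K}$ is again one, its residue field is a finite extension of $\kappa$, and the $p$-rank of a field of characteristic $p$ with finite $p$-rank is unchanged (not merely non-increasing) under finite extensions; hence $[\mathcal{L}':E]=p^{\mathscr{R}_{ps}(\mathcal{K})}$ for any $\mathcal{L}'\in\mathfrak{PP}(E)$ by Remark \ref{rmk:degree}, and the splitting of all of $H^i(E,\mu_p^{\otimes i-1})$ by $\mathcal{L}'$ gives the bound on $gssd^i_p(\mathcal{K})$. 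Your worry about $\mathscr{R}_{ps}$ under finite extensions of higher-dimensional rings is not needed for this corollary.
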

\begin{proof}
    See \S\ref{sec:mainh} for the first part of the claim. The second part follows by the observation  that the $p$-rank of the residue is invariant under  any  finite extension $\mathcal{K}'$ of $\mathcal{K}$ so that for any $\mathcal{L}' \in \mathfrak{PP}(\mathcal{K}')$, 
     $[\mathcal{L}':\mathcal{K}'] = p^{\mathscr{R}_{ps}(\mathcal{K})}$ (Remark \ref{rmk:degree}).
    \end{proof}
\begin{cor} (\cite[Conjecture 1.5]{nivedita})\label{cor:main}
 With notations as above, 
    \begin{align*}
    \mathscr{R}_p(\kappa) \leq Br_pdim(\mathcal{K}) \leq \mathscr{R}_{ps}(\mathcal{K})
    \end{align*}
  if either $p\neq 2$ or   $H^2_{\acute{e}t}(\kappa,\mathbb{Z}/2(1))$ is trivial.  
\end{cor}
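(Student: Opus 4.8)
The lower bound $\mathscr{R}_p(\kappa)\le Br_pdim(\mathcal{K})$ is already known by \cite{chip}, so the plan is to prove only the upper bound $Br_pdim(\mathcal{K})\le\mathscr{R}_{ps}(\mathcal{K})$; since $dim~\mathcal{O}_{\mathcal{K}}=1$ we have $\mathscr{R}_{ps}(\mathcal{K})=\mathscr{R}_p(\kappa)+1$, so this is precisely the bound predicted in Conjecture \ref{conj:main}. The substance is already contained in Corollary \ref{cor:mainh}, which (taking $i=2$, legitimate under the stated hypothesis on $\kappa$) gives $gssd^2_p(\mathcal{K})\le\mathscr{R}_{ps}(\mathcal{K})$. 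Because $\mathcal{K}$ has characteristic zero, the Kummer sequence identifies $H^2(E,\mu_p^{\otimes 1})=H^2(E,\mu_p)$ with $Br(E)[p]$ for every field $E\supseteq\mathcal{K}$, compatibly with restriction maps; consequently, for a one-element set $S=\{\alpha\}$ the index $ind(S)$ is simply the Brauer index of the class $\alpha$. Specializing the defining property of $gssd^2_p$ to such singletons therefore gives: for every finite extension $E/\mathcal{K}$ and every $\alpha\in Br(E)[p]$, one has $ind(\alpha)\mid p^{\mathscr{R}_{ps}(\mathcal{K})}$.

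It remains to pass from $p$-torsion to $p$-primary torsion, for which I would run the standard d\'evissage on the period. Let $E/\mathcal{K}$ be a finite extension and $\alpha\in Br(E)[p^\infty]$ with $per(\alpha)=p^m$; proceed by induction on $m$, the case $m=0$ being trivial. For $m\ge1$ the class $p^{m-1}\alpha$ has period $p$, hence by the previous paragraph there is a field extension $E_1/E$ with $[E_1:E]=ind(p^{m-1}\alpha)\mid p^{\mathscr{R}_{ps}(\mathcal{K})}$ splitting it (a maximal subfield of the associated division algebra). Over $E_1$ we then have $per(\alpha_{E_1})\mid p^{m-1}$, so by the inductive hypothesis $ind(\alpha_{E_1})\mid p^{(m-1)\mathscr{R}_{ps}(\mathcal{K})}$; combining with the standard divisibility $ind(\alpha)\mid [E_1:E]\cdot ind(\alpha_{E_1})$ yields $ind(\alpha)\mid p^{m\,\mathscr{R}_{ps}(\mathcal{K})}=per(\alpha)^{\mathscr{R}_{ps}(\mathcal{K})}$. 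Since $E$ and $\alpha$ were arbitrary, $Br_pdim(\mathcal{K})\le\mathscr{R}_{ps}(\mathcal{K})$, which with the lower bound completes the proof.

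A more concrete variant, bypassing the $gssd$ formalism, is to build an explicit splitting tower. Given $\alpha\in Br(\mathcal{K})[p^m]$, pick $\mathcal{L}_1\in\mathfrak{PP}(\mathcal{K})$; by Corollary \ref{cor:mainh} the restriction map $H^2(\mathcal{K},\mu_p)\to H^2(\mathcal{L}_1,\mu_p)$ is zero, so $p^{m-1}\alpha\in Br(\mathcal{K})[p]$ dies over $\mathcal{L}_1$ and $per(\alpha_{\mathcal{L}_1})\mid p^{m-1}$. By the lemma of \S\ref{sec:pseudo} the field $\mathcal{L}_1$ is again the fraction field of a complete regular local ring with the same pseudo-rank, so one iterates, obtaining a tower $\mathcal{K}\subseteq\mathcal{L}_1\subseteq\cdots\subseteq\mathcal{L}_m$ with each step of degree $p^{\mathscr{R}_{ps}(\mathcal{K})}$ by Remark \ref{rmk:degree}, over which $\alpha$ becomes trivial; hence $ind(\alpha)\mid [\mathcal{L}_m:\mathcal{K}]=p^{m\,\mathscr{R}_{ps}(\mathcal{K})}=per(\alpha)^{\mathscr{R}_{ps}(\mathcal{K})}$. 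Running this over an arbitrary finite extension $E/\mathcal{K}$ (still a complete discrete valued field of residue characteristic $p$, with residue $p$-rank unchanged) again gives the bound on $Br_pdim$.

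Since Theorem \ref{thm:main} and Corollary \ref{cor:mainh} carry all the genuine content, there is no real obstacle in this argument; the only delicate point is the residue-characteristic-$2$ case, where running Theorem \ref{thm:main} extension by extension would require the hypothesis $H^2_{\acute{e}t}(\kappa,\mathbb{Z}/2(1))=0$ to be inherited by every finite residue extension. This is exactly why it is cleanest to quote the bound $gssd^2_p(\mathcal{K})\le\mathscr{R}_{ps}(\mathcal{K})$ of Corollary \ref{cor:mainh} directly, as in the first argument; when $p\neq2$, or when $\kappa$ is perfect so that all residue extensions are perfect, the statement is unconditional.
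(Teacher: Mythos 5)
Your argument is correct and is essentially the paper's own proof: the lower bound is quoted from \cite{chip}, and the upper bound comes from Corollary \ref{cor:mainh} (pseudo-perfect extensions of any finite extension $E/\mathcal{K}$ still have degree $p^{\mathscr{R}_{ps}(\mathcal{K})}$, since the residue $p$-rank is unchanged) combined with the standard d\'evissage on the period, which the paper simply cites as \cite[Lemma 1.1]{ps14} and you write out explicitly. Your closing caveat about the $p=2$ hypothesis propagating to finite extensions is a fair observation, but it does not change the route, which matches the paper's.
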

\begin{proof}
    The lower bound is already shown in  \cite{chip}.  As argued before, the $p$-rank of the residue  is invariant under  any  finite extension $\mathcal{K}'$ of $\mathcal{K}$, so for any $\mathcal{L}' \in \mathfrak{PP}(\mathcal{K}')$, 
     $[\mathcal{L}':\mathcal{K}'] = p^{\mathscr{R}_{ps}(\mathcal{K})}$ (Remark \ref{rmk:degree}). The upper bound now follows from the fact that $Br(\mathcal{K})[p] \simeq H^2(\mathcal{K}, \mu_p)$  together with Corollary \ref{cor:mainh} and \cite[Lemma 1.1]{ps14}.
    \end{proof}

\begin{cor}
 With notations as above, let $[D] \in Br(\mathcal{K})[p]$ be the class of a division algebra $D$ of degree $p^{\mathscr{R}_{ps}(\mathcal{K})}$ over $\mathcal{K}$. Then every $\mathcal{L} \in \mathfrak{PP}(\mathcal{K})$ embeds as a maximal subfield of $D$.
\end{cor}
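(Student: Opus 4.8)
The plan is to deduce this directly from Corollary~\ref{cor:mainh} together with the classical correspondence between splitting fields of a central division algebra and its maximal subfields. Write $n=\mathscr{R}_{ps}(\mathcal{K})$, so that by hypothesis $\deg D=p^{n}$, and $ind(D)=p^{n}$ since $D$ is a division algebra.

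First I would reduce the statement to showing that $\mathcal{L}$ splits $D$. Since $[D]\in Br(\mathcal{K})[p]$ and $Br(\mathcal{K})[p]\cong H^2(\mathcal{K},\mu_p)$ (Kummer sequence and Hilbert~90), the class of $D$ is carried by $H^2(\mathcal{K},\mu_p)$, and under this identification the base change $[D]\mapsto[D\otimes_{\mathcal{K}}\mathcal{L}]$ is exactly the restriction map $Res^2_{\mathcal{L}/\mathcal{K}}$ of Corollary~\ref{cor:mainh}. By that corollary (applicable in the range $i=2$ under the standing hypothesis $p\neq 2$, or more generally $H^2_{\acute{e}t}(\kappa,\mathbb{Z}/2(1))=0$ — precisely the situation in which a $D$ of full degree $p^{n}$ is produced, via Corollary~\ref{cor:main}) this restriction map is zero. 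Hence $[D\otimes_{\mathcal{K}}\mathcal{L}]=0$ in $Br(\mathcal{L})$, i.e.\ $\mathcal{L}$ is a splitting field of $D$.

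Next I would combine this with the degree count. By Remark~\ref{rmk:degree}, $[\mathcal{L}:\mathcal{K}]=p^{n}=\deg D$, and since $\operatorname{char}\mathcal{K}=0$ the extension $\mathcal{L}/\mathcal{K}$ is separable. Now I invoke the classical fact that a finite separable extension $L/K$ of degree $n=\deg D$ is $K$-isomorphic to a (necessarily maximal) subfield of a central division $K$-algebra $D$ of degree $n$ if and only if $L$ splits $D$; equivalently, a splitting field of $D$ whose degree equals $ind(D)$ must already embed in the unique division algebra in its Brauer class as a maximal commutative subfield (see, e.g., Gille--Szamuely, \emph{Central Simple Algebras and Galois Cohomology}, Prop.~4.5.4 and Cor.~4.5.9, or Pierce, \emph{Associative Algebras}, \S13.3). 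Applying this to $\mathcal{L}/\mathcal{K}$ and $D$ yields the desired $\mathcal{K}$-embedding of $\mathcal{L}$ into $D$ as a maximal subfield.

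The substance of the argument lies entirely in Corollary~\ref{cor:mainh} and the cited classical embedding criterion, so I expect no serious obstacle; the only point requiring care is that the case $p=2$ needs the vanishing of the obstruction $H^2_{\acute{e}t}(\kappa,\mathbb{Z}/2(1))$ in order for $Res^2_{\mathcal{L}/\mathcal{K}}$ — and hence the splitting step — to go through.
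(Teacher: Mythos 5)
Your argument is correct and is exactly the route the paper intends: the paper's proof is the one-line ``Follows from Corollary \ref{cor:mainh} and standard theory on central simple algebras,'' and your write-up simply makes explicit the two ingredients (splitting via the vanishing of $Res^2_{\mathcal{L}/\mathcal{K}}$, then the classical fact that a splitting field of degree equal to the index embeds as a maximal subfield). Your caveat about the case $p=2$ requiring $H^2_{\acute{e}t}(\kappa,\mathbb{Z}/2(1))=0$ is also consistent with the hypotheses carried by ``with notations as above.''
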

\begin{proof}
    Follows from  Corollary \ref{cor:mainh} and standard theory on central simple algebras.
\end{proof}
The following is a generalization of Corollary \ref{cor:mainh} for higher dimensional regular local rings  when $i=2$.
 
    \begin{thm}\label{thm:ndim}
    Let $\mathcal{K}$ be  the fraction field of a complete  regular local ring $\mathcal{R}$ with   residue field $\kappa$  of characteristic $p \neq 2$. For $\alpha \in Br(\mathcal{K})[p]$, suppose the  ramification locus  of $\alpha$ corresponds to a subset of some regular system of parameters $\Pi$ of $\mathcal{R}$. Then $\alpha$ is split by some $\mathcal{L} \in \mathfrak{PP}(\mathcal{K})$. More specifically, for every pseudo-basis $\Lambda$ of $\mathcal{K}$ containing $\Pi$,
    \begin{align*}
        \alpha \otimes_{\mathcal{K}} \mathcal{K}(\Lambda^{1/p}) =0
    \end{align*}
In other words, the kernel of the natural map 
    \begin{align*}
        Br(\mathcal{K})[p] \rightarrow Br(\mathcal{K}(\Lambda^{1/p}))[p]
    \end{align*}
    contains all the classes that are ramified at most at $\Pi$.
\end{thm}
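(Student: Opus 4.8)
The plan is to reduce the higher-dimensional statement to the already-established one-dimensional case (Corollary \ref{cor:mainh}, applied with $i=2$) by passing to an appropriate complete discrete valued field and exploiting injectivity on Brauer groups. Concretely, write $\Pi = \{\pi_1,\dots,\pi_m,\dots,\pi_d\}$ for a regular system of parameters of $\mathcal{R}$ ($d = \dim \mathcal{R}$), arranged so that the ramification locus of $\alpha$ is supported on the divisors cut out by $\pi_1,\dots,\pi_m$ for some $m \le d$. The idea is to induct on $d$: the base case $d=1$ is exactly Corollary \ref{cor:mainh} (since then $\mathcal{K}$ is itself a complete discrete valued field of mixed characteristic, and a pseudo-basis is a pseudo-basis in the sense of that corollary), and for the inductive step I would single out a parameter $\pi_j$ at which $\alpha$ is unramified — if one exists, i.e. if $m < d$ — pass to the completion $\mathcal{R}_{(\pi_j)}^{\wedge}$, whose fraction field $\mathcal{K}_j$ is a complete discrete valued field with residue field the fraction field of the complete regular local ring $\mathcal{R}/\pi_j\mathcal{R}$ (dimension $d-1$), and use that $\alpha$, being unramified at $\pi_j$, descends to a class $\bar\alpha$ over that residue field which is ramified at most along $\{\bar\pi_1,\dots,\bar\pi_m\}$. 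By induction $\bar\alpha$ is split by $(\mathcal{R}/\pi_j)[\bar\Lambda'^{1/p}]$ for the induced pseudo-basis $\bar\Lambda' = \Lambda \setminus \{\pi_j\}$ reduced mod $\pi_j$; lifting, and adjoining $\pi_j^{1/p}$ as well, one gets that $\alpha$ becomes unramified over $\mathcal{K}(\Lambda^{1/p})$ — indeed its residue along the (unique) divisor above $\pi_j$ vanishes and its residues along the other $\pi_i^{1/p}$ are killed by the $p$-th power twist — and then a purity/unramified-Brauer argument finishes.

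The genuinely hard case is $m = d$, i.e. when $\alpha$ is ramified along *every* member of the chosen regular system of parameters, so there is no "free" parameter to complete at. Here the plan is to first adjoin $\pi_1^{1/p},\dots,\pi_d^{1/p}$ and the lifted residual $p$-basis $\tilde{\mathcal{B}}^{1/p}$ all at once, obtaining $\mathcal{L} = \mathcal{K}(\Lambda^{1/p}) \in \mathfrak{PP}(\mathcal{K})$, the fraction field of the complete regular local ring $\mathcal{S} = \mathcal{R}[\Lambda^{1/p}]$. One then wants to show $\alpha_{\mathcal{L}} = 0$ directly. I would compute the ramification of $\alpha_{\mathcal{L}}$: the divisor of $\pi_i$ in $\mathcal{R}$ pulls back to $p$ times the divisor of $\pi_i^{1/p}$ in $\mathcal{S}$, so by the standard behaviour of residue maps under ramified base change (the residue along a prime of ramification index $e$ is multiplied by $e$), the residue of $\alpha_{\mathcal{L}}$ along each height-one prime $(\pi_i^{1/p})$ is $p$ times something and hence zero; since these are the only primes where ramification could occur, $\alpha_{\mathcal{L}}$ is unramified on $\mathcal{S}$. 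As $\mathcal{S}$ is a complete regular local ring, purity for the Brauer group (Auslander–Goldman / Gabber) gives $\alpha_{\mathcal{L}} \in \mathrm{Br}(\mathcal{S}) \hookrightarrow \mathrm{Br}(\mathcal{S}_{red}) = \mathrm{Br}(\kappa^{1/p^{?}})$... — but this is exactly the point where more care is needed, because the residue field of $\mathcal{S}$ is $\kappa^{1/p}$, which need not have trivial $p$-torsion Brauer group. So a naive purity argument is not enough.

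To close that gap I would instead argue by dévissage on the filtration of $\mathrm{Br}(\mathcal{K})[p]$ by ramification, combined with the Main Theorem's symbolic input: the Key Lemma and Theorem \ref{thm:main} show that over a one-dimensional complete discretely valued base, pseudo-perfect extensions kill all of $H^2(-,\mu_p)$, *including* the unramified part coming from the residue field (this is where $p \ne 2$ is used, via the Key Lemma on sums of $p$-th powers, precisely so that $\mathrm{Br}(\kappa)[p]$ is handled). So the real strategy is: peel off one parameter at a time using the excision/Gysin sequence
\[
0 \to \mathrm{Br}(\mathcal{O}_{(\pi_i)})[p] \to \mathrm{Br}(\mathcal{K})[p] \xrightarrow{\partial_{\pi_i}} H^1(\kappa(\pi_i), \mathbb{Z}/p) \to \cdots,
\]
use the one-variable Theorem \ref{thm:main} over the completion at $\pi_i$ to kill $\partial_{\pi_i}(\alpha)$ after the base change $\pi_i \mapsto \pi_i^{1/p}$ and adjoining the needed residual roots, and iterate over $i = 1,\dots,d$; at each stage the residue-field part that one creates is itself a Brauer class over a lower-dimensional complete regular local ring ramified at fewer parameters, so the induction hypothesis applies. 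I expect the main obstacle to be bookkeeping the compatibility of the various residue maps with the simultaneous root extraction $\Lambda \mapsto \Lambda^{1/p}$ — in particular checking that adjoining $\pi_j^{1/p}$ for $j \ne i$ does not reintroduce ramification at $\pi_i$ and that the lifted residual $p$-basis behaves correctly under each completion — and to handle this cleanly I would set up the whole argument in terms of the Bloch–Kato unit filtration on $K_2/p$ (as in the proof of the Main Theorem) rather than Brauer groups directly, transferring back at the end via $\mathrm{Br}(\mathcal{K})[p] \cong H^2(\mathcal{K},\mu_p)$.
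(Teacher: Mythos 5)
Your overall strategy (complete at the parameters, invoke the one-dimensional result, finish by purity) is essentially the paper's, and you correctly locate the danger spot: after adjoining all of $\Lambda^{1/p}$ at once, purity only places $\alpha_{\mathcal{L}}$ in $Br(\mathcal{S})\cong Br(\kappa^{1/p})$, whose $p$-torsion need not vanish. But the d\'evissage you propose to close this gap does not actually close it, because you never supply a mechanism for showing that an \emph{unramified} class on $\mathcal{S}$ is zero. The paper uses two such devices. (a) When every $\mathcal{R}/(\pi_i)$ has characteristic zero, it adjoins only $\Pi^{1/p}$ first: the class then becomes unramified on $\mathcal{S}'=\mathcal{R}[\Pi^{1/p}]$, whose residue field is still $\kappa$ (not $\kappa^{1/p}$), so it is identified with an element of $Br(\kappa)[p]$, which is generated by cyclic $p$-algebras and hence dies under the \emph{further} adjunction of $\tilde{\mathcal{B}}^{1/p}$. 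The order of adjunction matters, and your all-at-once computation loses exactly this. (b) Otherwise it proves a separate statement (Lemma \ref{lem:trivial}): a class unramified on a complete regular local ring that becomes trivial over the completed fraction field at a \emph{single} regular prime is globally trivial, by chaining injectivity of the Brauer group of complete/regular local rings into that of successive quotients down to the closed point. Your sentence ``the residue-field part one creates is itself a Brauer class over a lower-dimensional ring, so the induction applies'' conflates the Gysin residue (which lives in $H^1$ of the residue field at $\pi_i$, not in a Brauer group) with the specialization; and even for the specialization you would still need device (b) to lift its triviality back up to $\mathcal{K}(\Lambda^{1/p})$.

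A second concrete problem is your treatment of the hard case: the claim that ``the residue along $(\pi_i^{1/p})$ is $p$ times something, hence zero'' is the tame residue formula and is only valid at parameters for which $\mathcal{R}/(\pi_i)$ has characteristic zero (this is \cite[Lemma 3.4]{ps14}). At a parameter with $p\in(\pi_i)$ the ramification is wild and that argument fails; this is precisely where the paper instead applies Corollary \ref{cor:mainh} to the mixed-characteristic complete discretely valued field $\mathcal{K}_{(\pi_i)}$ --- whose residue field $\kappa((\overline{\pi}_j)_{j\neq i})$ has the enlarged $p$-basis $\tilde{\mathcal{B}}\cup\{\overline{\pi}_j\}_{j\neq i}$, so that $\mathcal{L}_{\mathcal{Q}}$ is pseudo-perfect over it --- to get outright triviality over $\mathcal{L}_{\mathcal{Q}}$, and then feeds that into Lemma \ref{lem:trivial}. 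Without separating these two kinds of parameters, and without one of the two devices above, your induction does not go through.
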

\begin{proof}
    See \S\ref{sec:higher}.
\end{proof}
\begin{cor}\label{cor:regular}
    With notations  and the hypothesis in Theorem \ref{thm:ndim}, $ind(\alpha) \mid p^{\mathscr{R}_{ps}(\mathcal{K})}$.
\end{cor}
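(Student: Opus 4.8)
The plan is to read off the index bound directly from the splitting statement of Theorem \ref{thm:ndim} together with the degree computation recorded in Remark \ref{rmk:degree}. First I would fix a regular system of parameters $\Pi$ of $\mathcal{R}$ carrying the ramification of $\alpha$ (which exists by the hypothesis of Theorem \ref{thm:ndim}) and complete it to a pseudo-basis $\Lambda$ of $\mathcal{K}$. Then Theorem \ref{thm:ndim} gives $\alpha \otimes_{\mathcal{K}} \mathcal{L} = 0$ for $\mathcal{L} := \mathcal{K}(\Lambda^{1/p}) \in \mathfrak{PP}(\mathcal{K})$; that is, $\mathcal{L}$ is a finite splitting field of $\alpha$.

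Next I would invoke the standard fact from the theory of central simple algebras that the index of a Brauer class divides the degree of any finite field extension splitting it: since $\alpha_{\mathcal{L}} = 0$, the relation $ind(\alpha) \mid [\mathcal{L}:\mathcal{K}] \cdot ind(\alpha_{\mathcal{L}})$ gives $ind(\alpha) \mid [\mathcal{L}:\mathcal{K}]$. Finally, by Remark \ref{rmk:degree} one has $[\mathcal{L}:\mathcal{K}] = p^{\mathscr{R}_{ps}(\mathcal{K})}$, and combining the two yields $ind(\alpha) \mid p^{\mathscr{R}_{ps}(\mathcal{K})}$.

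There is no genuinely hard step here, since all the substance lives in Theorem \ref{thm:ndim}; the only point deserving a word of care is that a pseudo-perfect extension has degree \emph{exactly} $p^{\mathscr{R}_{ps}(\mathcal{K})}$ and not merely a divisor of it, i.e. that adjoining the $p$-th roots of the $\mathscr{R}_{ps}(\mathcal{K})$ elements of a pseudo-basis produces an extension of full expected degree. This is precisely what Remark \ref{rmk:degree} records, itself a consequence of the Lemma identifying $\mathcal{L}$ as the fraction field of the complete regular local ring $\mathcal{R}[\Lambda^{1/p}]$ with residue field $\kappa^{1/p}$ and regular parameters $\Pi^{1/p}$. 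Thus the proof is a one-line combination of Theorem \ref{thm:ndim} and Remark \ref{rmk:degree}.
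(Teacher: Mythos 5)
Your proposal is correct and is exactly the argument the paper intends: the corollary is stated without a separate proof precisely because it follows immediately from Theorem \ref{thm:ndim} (which produces a splitting field $\mathcal{L} \in \mathfrak{PP}(\mathcal{K})$) combined with Remark \ref{rmk:degree} (which gives $[\mathcal{L}:\mathcal{K}] = p^{\mathscr{R}_{ps}(\mathcal{K})}$) and the standard fact that the index divides the degree of any finite splitting field.
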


 The above period-index bounds  for elements in $Br(\mathcal{K})[p]$ yield an improved upper bound for semi-global fields  via patching techniques of \cite{hhk09}: 
\begin{thm}\label{thm:semiglobal}
Let $\mathcal{K}$ be a complete discrete valued field  with residue $\kappa$. Assume that either $p=char~\kappa \neq2$ or that $H^2_{\acute{e}t}(\kappa,\mathbb{Z}/2(1))$ is trivial. Let $F$ be  a semi-global field over $\mathcal{K}$ i.e., the function field of a curve over $\mathcal{K}$. Then 
\begin{align*}
    Br_pdim(F) \leq \mathscr{R}_{ps}(\mathcal{K})+1
\end{align*}

\end{thm}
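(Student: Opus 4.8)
The plan is to reduce the statement about semi-global fields to the complete discrete valued field case already handled in Corollary \ref{cor:regular} (via Theorem \ref{thm:ndim}), using the local-global principle for splitting Brauer classes coming from the patching framework of Harbater, Hartmann and Krashen. Let $F=\mathcal{K}(C)$ be the function field of a smooth projective curve $C$ over $\mathcal{K}$, and let $\alpha\in Br(E)[p^m]$ for a finite extension $E/F$. Since $Br\,dim$-type bounds are insensitive to replacing $F$ by a finite extension (any finite extension of a semi-global field over $\mathcal{K}$ is again a semi-global field over a finite extension $\mathcal{K}'/\mathcal{K}$, and $\mathscr{R}_{ps}(\mathcal{K}')=\mathscr{R}_{ps}(\mathcal{K})$ by invariance of $p$-rank under finite extension together with Remark \ref{rmk:degree}), it suffices to treat $\alpha\in Br(F)[p]$ and show $ind(\alpha)\mid p^{\mathscr{R}_{ps}(\mathcal{K})+1}$.

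\indent First I would choose a regular model: a normal projective $\mathcal{O}_{\mathcal{K}}$-curve $\mathcal{X}$ with function field $F$, and after blowing up one may assume $\mathcal{X}$ is regular with the ramification divisor of $\alpha$ together with the special fibre forming a normal crossings divisor. The local rings of $\mathcal{X}$ at closed points are then complete regular local rings of dimension $2$ (after completion), with residue field of characteristic $p$ and $p$-rank at most $\mathscr{R}_p(\kappa)$; the complete local rings at codimension-one points of the closed fibre are complete discrete valued fields with residue a function field in one variable over $\kappa$, hence of $p$-rank $\mathscr{R}_p(\kappa)+1$, i.e. pseudo-rank $\mathscr{R}_{ps}(\mathcal{K})+1$ (one counts the dimension of $\mathcal{R}$ plus the $p$-rank of the residue). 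The key point is that at each such local patch $F_v$, the restriction $\alpha_v$ is a Brauer class on a field that is either a complete dvf of pseudo-rank $\mathscr{R}_{ps}(\mathcal{K})+1$ or the fraction field of a complete two-dimensional regular local ring whose ramification, by the normal crossings arrangement, is supported on a subset of a regular system of parameters; in both cases Corollary \ref{cor:mainh} (with $i=2$, using the hypothesis $p\neq 2$ or $H^2_{\acute et}(\kappa,\mathbb{Z}/2(1))=0$) or Corollary \ref{cor:regular} gives a splitting field of degree dividing $p^{\mathscr{R}_{ps}(\mathcal{K})+1}$, in fact a pseudo-perfect extension realizing this.

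\indent Next I would invoke the patching local-global principle: by \cite{hhk09} (the field-patching results of Harbater--Hartmann--Krashen), a class $\alpha\in Br(F)[p]$ satisfies $ind(\alpha)\mid N$ provided $ind(\alpha_v)\mid N$ for all $v$ ranging over the patches (closed points and branches of the closed fibre) of a suitable model, at least after passing to a finite extension to arrange that the local splitting fields glue — more precisely one uses that the index of $\alpha$ over $F$ divides the l.c.m. of the local indices, which here all divide $p^{\mathscr{R}_{ps}(\mathcal{K})+1}$. This yields $ind(\alpha)\mid p^{\mathscr{R}_{ps}(\mathcal{K})+1}$ and hence $Br_pdim(F)\le \mathscr{R}_{ps}(\mathcal{K})+1$.

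\indent The main obstacle I anticipate is the matching/compatibility step in the patching argument: producing the regular normal-crossings model on which every local ramification pattern is of the shape required by Theorem \ref{thm:ndim} (ramification on a subset of a regular system of parameters), and then arranging the local splitting fields to be coherent enough that the HHK local-global principle actually assembles them into a global splitting field of the predicted degree — this typically forces passage to a finite (prime-to-something or $p$-power) extension and careful bookkeeping of degrees, and it is where one must also verify the residue-field hypothesis ($p\neq 2$ or vanishing of $H^2_{\acute et}(\kappa,\mathbb{Z}/2(1))$) is preserved. The rest — identifying pseudo-ranks of the local fields and quoting Corollaries \ref{cor:mainh} and \ref{cor:regular} — is routine given the earlier sections.
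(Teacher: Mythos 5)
Your proposal is correct and follows essentially the same route as the paper's proof: reduce to period-$p$ classes on a regular normal-crossings model, split $\alpha$ over the patches $F_\eta$ at generic points of the special fibre via Corollary \ref{cor:mainh} (pseudo-rank there being $\mathscr{R}_{ps}(\mathcal{K})+1$) and over the patches $F_P$ at closed points via Theorem \ref{thm:ndim}/Corollary \ref{cor:regular}, then assemble with the HHK l.c.m.\ formula \cite[Theorem 5.1]{hhk09}. The only cosmetic difference is that the gluing/compatibility concern you raise at the end is not an issue: the index formula applies directly once the local indices are bounded, with no further passage to auxiliary extensions needed.
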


\begin{proof}
    See \S \ref{sec:semiglobal}.
\end{proof}
This bound is sharper than known before (compare \cite[Corollary 3.7]{ps14}) and agrees with the general consensus that the  upper bound for $Br_pdim$ should increase at most by one for fields extending  by transcendence degree one.\\
\indent The above results also yield  uniform bounds for the Brauer group and  higher cohomologies of semi-global fields. These are sharper than previously known in \cite{ps_uniform}:  

\begin{thm}\label{thm:uniform}
With notations in Theorem \ref{thm:semiglobal}, the Brauer group of  $F$ is uniformly $p$-bounded. More precisely, 
\begin{align*}
    gssd_p^2(F)\leq 2 (\mathscr{R}_{ps}(\mathcal{K})+1)
\end{align*}
    In addition, $F$ is uniformly $(2,p)$-bounded and $[F(\zeta):F]p^{2(\mathscr{R}_{ps}(\mathcal{K})+1)}$ is a $(2,p)$-uniform bound of $F$, where $\zeta$ is a primitive $p$-th root of unity.
\end{thm}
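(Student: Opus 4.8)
The plan is to derive the uniform bounds for the semi-global field $F$ from the local period-index bounds of Theorem \ref{thm:semiglobal} together with the patching machinery of Harbater--Hartmann--Krashen. First I would recall that by \cite[Theorem 3]{ps14} or the reduction arguments in \cite{ps_uniform}, for a semi-global field $F = \mathcal{K}(C)$ one has a \emph{local-global principle} for splitting Brauer classes: given a finite subset $S \subseteq Br(F)[p]$, there is a regular model $\mathcal{X}$ of $C$ over $\mathcal{O}_{\mathcal{K}}$ such that an extension $E/F$ splits every element of $S$ provided $E$ splits the restriction of $S$ to each of the local fields $F_\xi$ (completions at points $\xi$ of the closed fiber) — or more precisely to the branch and point fields arising from $\mathcal{X}$. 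Each such local field is itself (the fraction field of) a complete regular local ring of dimension at most $2$ with residue field a finite extension of $\kappa$ or of a curve over $\kappa$, so its pseudo-rank is controlled: $\mathscr{R}_{ps}$ of the point fields is at most $\mathscr{R}_{ps}(\mathcal{K})+1$, and Corollary \ref{cor:regular} (applied after checking the ramification of the restricted class lies on a regular system of parameters, which one arranges by blowing up so the ramification divisor has normal crossings) gives that each local restriction is split by an extension of degree $p^{\mathscr{R}_{ps}(\mathcal{K})+1}$.

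The second step is the patching step proper: by \cite[\S5]{hhk09} (the ``patching over fields'' / $\operatorname{Br}$-patching results), if at each relevant local field one has a splitting field of degree dividing $p^{m}$ with $m = \mathscr{R}_{ps}(\mathcal{K})+1$, these can be patched to a single field extension $E/F$ of degree dividing $p^{2m}$ (the exponent doubles because one must simultaneously kill the ``vertical'' contributions at points and the ``horizontal'' contributions along branches; concretely one takes a compositum over the finitely many patches and controls the degree by $p^{m}\cdot p^{m}$). Since $E$ splits every element of $S$, we get $\operatorname{ind}(S) \mid p^{2m} = p^{2(\mathscr{R}_{ps}(\mathcal{K})+1)}$, which is exactly $gssd_p^2(F) \leq 2(\mathscr{R}_{ps}(\mathcal{K})+1)$; in particular $Br(F)$ is uniformly $p$-bounded.

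For the final assertion I would bootstrap from the $(2,p)$-torsion-coefficient statement to the $\mu_p^{\otimes 2}$-coefficient statement. After adjoining a primitive $p$-th root of unity $\zeta$ — which costs a factor of $[F(\zeta):F]$ in the index bound by a standard restriction--corestriction argument — one has $\mu_p \cong \mathbb{Z}/p$ over $F(\zeta)$, so $H^2(F(\zeta),\mu_p^{\otimes 2}) \cong H^2(F(\zeta),\mu_p) \cong Br(F(\zeta))[p]$, and the bound just proved applies verbatim over $F(\zeta)$ (which is again semi-global over the complete discrete valued field $\mathcal{K}(\zeta)$ with the same residue characteristic, and $\mathscr{R}_{ps}$ is unchanged since adjoining roots of unity does not change the $p$-rank of the residue). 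Tracing the index through the extension $F(\zeta)/F$ yields that $[F(\zeta):F]\,p^{2(\mathscr{R}_{ps}(\mathcal{K})+1)}$ is a $(2,p)$-uniform bound of $F$. The main obstacle I anticipate is the bookkeeping in the patching step: one must verify that the ramification data of the restricted Brauer classes can be put in the ``nicely ramified'' form required by Theorem \ref{thm:ndim}/Corollary \ref{cor:regular} after a suitable choice of model (normal crossings), and then track carefully how the local splitting degrees combine under patching so that the exponent is $2m$ and not something larger — this is where the factor of $2$ genuinely comes from and where the argument must be made precise.
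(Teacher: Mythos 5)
Your overall strategy matches the paper's: split each $\alpha_j$ locally at the generic points of the special fiber (via Corollary \ref{cor:mainh}, since the residue field of $F_\eta$ has $p$-rank $n+1$) and at the finitely many bad closed points (via Theorem \ref{thm:ndim} after arranging normal crossings), observe that each local splitting costs degree $p^{\mathscr{R}_{ps}(\mathcal{K})+1}$, and invoke the Harbater--Hartmann--Krashen local-global machinery; the factor of $2$ does indeed come from having to handle the two kinds of patches simultaneously, and your treatment of the $\mu_p^{\otimes 2}$ statement by adjoining $\zeta$ is exactly what the paper does.

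However, there is a genuine gap at the step you yourself flag as needing precision, and the mechanism you propose there does not work as stated. You say one ``takes a compositum over the finitely many patches'' of the local splitting fields to get a single $E/F$ of degree $p^{2m}$. But the local splitting fields are extensions of the \emph{different} overfields $F_\eta$, $F_{U_\eta}$, $F_P$ of $F$; they are not extensions of $F$, so their compositum is not a finite extension of $F$ and does not directly yield a global splitting field of controlled degree. The device the paper uses instead is weak approximation: by the Chinese Remainder Theorem one chooses \emph{global} elements $f, u_1,\dots,u_{n+1} \in F^\times$ such that $f$ is simultaneously a uniformizer at every generic point $\eta$ and the $\overline{u}_j$ simultaneously restrict to a $p$-basis of every $\kappa(\eta)$, and similarly global elements $g_1,g_2,v_1,\dots,v_n$ (using \cite[Lemma 2.3]{ps_uniform} to make $(g_1,g_2)$ a regular system of parameters containing the ramification locus at every bad closed point $P$ at once). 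Adjoining the $p$-th roots of these $2(n+2)$ global elements produces one field $L/F$ of degree exactly $p^{2(\mathscr{R}_{ps}(\mathcal{K})+1)}$ whose completion at each patch contains the required pseudo-perfect extension; this is where the exponent is actually counted. One then verifies $\alpha_j\otimes L_y=0$ at every point $y$ of the special fiber of a regular model dominating $\mathcal{X}$ and concludes by the local-global principle of \cite[Theorem 9.12]{hhk_torsors}. Without the CRT step your argument does not produce a single finite extension of $F$ of the claimed degree, so the bound on $\mathrm{ind}(S)$ does not follow.
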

\begin{proof}
    See \S \ref{sec:uniform}.
    \end{proof}

    \begin{cor}\label{cor:uniform}
    With notations as above, $F$ is uniformly $(i,p)$-bounded for every $i\geq 2$ and $[F(\zeta):F]p^{2 (\mathcal{R}_{ps}(\mathcal{K})+1)}$ is an $(i,p)$-uniform bound for $F$.
    \end{cor}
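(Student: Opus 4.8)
\textit{Sketch of the argument.} The plan is to obtain the corollary by induction on $i$, taking the (substantive) case $i=2$ from Theorem \ref{thm:uniform} and pushing it upward by a degree‑lowering argument on cohomology. Write $N:=[F(\zeta):F]\,p^{2(\mathscr{R}_{ps}(\mathcal{K})+1)}$. Theorem \ref{thm:uniform} asserts that $N$ is a $(2,p)$‑uniform bound of $F$, and I would show that the \emph{same} $N$ is an $(i,p)$‑uniform bound of $F$ for every $i\ge 2$; since $N$ is independent of $i$, this yields a single uniform bound valid in all cohomological degrees simultaneously, which is precisely the assertion of Corollary \ref{cor:uniform}.

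For the inductive step I would argue as follows. Fix $i\ge 3$ and assume that $N$ is an $(i-1,p)$‑uniform bound of $F$. Let $E/F$ be finite and $S\subseteq H^i(E,\mu_p^{\otimes i})$ finite. Via the norm residue isomorphism $K_i(E)/p\cong H^i(E,\mu_p^{\otimes i})$, every class in $S$ is a finite sum of symbols; writing a symbol $(a_0)\cup(a_1)\cup\cdots\cup(a_{i-1})$, with $(a_j)\in H^1(E,\mu_p)$, as $(a_0)\cup\beta$ where $\beta:=(a_1)\cup\cdots\cup(a_{i-1})\in H^{i-1}(E,\mu_p^{\otimes i-1})$, one exhibits each element of $S$ as a finite sum of cup products $(a_0)\cup\beta$ with $\beta\in H^{i-1}(E,\mu_p^{\otimes i-1})$. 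Collect all the factors $\beta$ arising in these finitely many expressions into a finite set $T\subseteq H^{i-1}(E,\mu_p^{\otimes i-1})$. If a finite extension $M/E$ splits $T$, then — cup product commuting with restriction and $(a_0)\cup 0=0$ — it splits every class of $S$ as well; hence the set of finite extensions of $E$ splitting $S$ contains the set of those splitting $T$, and therefore $ind(S)$ divides $ind(T)$ (a g.c.d. over a larger set of integers divides the g.c.d. over a subset). By the inductive hypothesis $ind(T)\le N$, whence $ind(S)\le N$. As $E$ and $S$ were arbitrary, $N$ is an $(i,p)$‑uniform bound of $F$, completing the induction.

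The real content here is already contained in Theorem \ref{thm:uniform} (hence in the patching arguments of \S\ref{sec:uniform}), so the one point in the inductive step that I would expect to need care is the passage from degree $i$ to degree $i-1$: it works only because we deal with the untwisted groups $H^i(-,\mu_p^{\otimes i})$, whose grading matches Milnor $K$‑theory and whose cup products match the identity $\mu_p\otimes\mu_p^{\otimes i-1}=\mu_p^{\otimes i}$ of Galois modules — this is why one cannot simply induct on the invariants $gssd^i_p(F)$, whose Tate twist by $i-1$ does not cooperate with cup products — and it uses that $H^i(E,\mu_p^{\otimes i})$ is generated by symbols, i.e. the surjectivity half of the norm residue theorem, the same ingredient already invoked in passing from Theorem \ref{thm:main} to Corollary \ref{cor:mainh}. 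Finally, note that the hypothesis on $H^2_{\acute{e}t}(\kappa,\mathbb{Z}/2(1))$ in the case $p=2$ enters only through the base case $i=2$; the inductive step is unconditional.
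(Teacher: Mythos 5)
Your argument is correct and is essentially the paper's: the paper simply cites \cite[Lemma 1.6]{ps_uniform}, and your induction via the norm residue isomorphism, the decomposition of symbols as $(a_0)\cup\beta$, and the divisibility $ind(S)\mid ind(T)$ is precisely the content of that lemma, with the base case $i=2$ supplied by Theorem \ref{thm:uniform}. Nothing further is needed.
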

\begin{proof}
    This follows from Lemma  \cite[Lemma 1.6]{ps_uniform}.
\end{proof}
\begin{remark}\normalfont
    In all of the above results where patching is not used, one may feel free to replace "complete" with "Henselian". Patching techniques for semi-global fields $\mathcal{K}(C)$ require that $\mathcal{K}$ is complete.
    \end{remark}
\section{Preliminaries}\label{sec:prelim}
\subsection{Cohomologies in the positive characteristic} For a scheme $X$ and  integers $r,n \in \mathbb{Z}$ with $n\neq 0$, the object $(\mathbb{Z}/n)(r)$ is defined as  follows. If $n$ is invertible in $X$, then $(\mathbb{Z}/n)(r)$ is the $r^{th}$ Tate twist of the constant sheaf on $X_{\acute{e}t}$. If $X$ is smooth over a field of characteristic $p$ and $n=p^sm, p\nmid m$, $(\mathbb{Z}/n)(r)$ is an object  of  the the derived category of abelian sheaves on $X_{\acute{e}t}$ defined by
\begin{align*}
    (\mathbb{Z}/n)(r) = (\mathbb{Z}/m)(r) + W_s\Omega^r_{X,log}[-r]
    \end{align*}
where $W_s\Omega_{X,log}$  is the logarithmic part of the de Rham-Witt complex on $X_{\acute{e}t}$.\\
\indent Let $K$ be a field and $n\neq 0$. Following \cite{kato_swan},  we define  
\begin{align*}
    H^q_n(K)&:= H^q((Spec~K)_{\acute{e}t}, (\mathbb{Z}/n)(q-1))\\
    H^q(K)&:= \varinjlim_{n} H^q_n(K)
\end{align*}
We recall that
\begin{align*}
    H^2(K) \cong Br(K)
  \end{align*}
  In particular,
  \begin{align*}
    H^2(K, (\mathbb{Z}/p)(1)) \cong Br(K)[p]
  \end{align*}
  \begin{remark}\label{rmk:asclosed}\normalfont
      When $char~K = p$,  $Br(K)[p]$ is generated by cyclic $p$-algebras (\cite[Chapter VII, \S9, Theorem 31]{albert_book}). Therefore, if $K$ is either perfect or Artin-Schreier closed, then $H^2_{\acute{e}t}(K, (\mathbb{Z}/p)(1)) =0$.  Here, $K$ is said to be Artin-Schreier closed if there are no non-trivial Artin-Schreier extensions of $K$. This is equivalent to saying that
      \begin{align*}
          K/\mathcal{P}(K) \cong H^1_{\acute{e}t}(K, \mathbb{Z}/p) = 0
      \end{align*}
      where $\mathcal{P}$ is the Artin-Schreier operator on $K$.
  \end{remark}
Let $\Omega^n_{F,log}$ denote the group of logarithmic K\"{a}hler differentials.  For a field $F$ of characteristic $p>0$, let 
\begin{align*}
\nu(q)_F: = Fr-I: \Omega_F^q &\rightarrow \Omega_F^q/d\Omega_F^{q-1}\\
x\cdot d\log y_1\wedge d\log y_2\wedge\cdots  \wedge  d\log y_q &\mapsto (x^p-x)\cdot d\log y_1\wedge d\log y_2\wedge\cdots  \wedge  d\log y_q 
\end{align*}
where $Fr$ is the Frobenius. Then
    \begin{align}\label{eqn:brauerp}
    H^q(F, (\mathbb{Z}/p)(q)) &\cong \Omega^q_{F,log} \cong ker(\nu(q)_F) \nonumber\\
        H^q_p(F) = H^1(F, \Omega_{F,\log}^{q-1})&\cong coker(\nu(q-1)_F)) =\Omega_F^{q-1}/((Fr-I)\Omega_F^{q-1} + d \Omega_F^{q-2})
    \end{align}
\begin{remark}\label{rmk:brauerp}\normalfont
    When $q=2$,  $\Omega_F/((Fr-I)\Omega_F + dF)\cong H^2_p(F) \cong  Br(F)[p] $ by (\ref{eqn:brauerp}) and the isomorphism is given by
    \begin{align*}
        wd \log v \mapsto [w,v)
    \end{align*}
    where the symbol $[w,v)$ denotes the cyclic $p$-algebra 
$$F<x,y : x^p-x=w, y^p=v, yxy^{-1} = x+1>$$
\end{remark}
\subsection{Relation to Milnor $K$-theory}
For a field $F$, let $K_n(F)$ denote the $n$-{th} Milnor $K$-group of $F$. Recall that $K_n(F)$ is the quotient of $(F^{\times})^{\otimes n}$ by the subgroup generated by the  elements of the form $x_1\otimes x_2\otimes\cdots\otimes x_n$ for which $x_i+x_j =1$ for some $1\leq i<j\leq n$.\\
\indent For any natural number  $m$, the symbols in $K_n(F)$  as well as  $K_n(F)/m$ are denoted by $\{a_1, a_2,\cdots,a_n\}, a_i \in F^{\times}$. 
\begin{thm}[Norm residue isomorphism \cite{merk_suslin}, \cite{voe03}, \cite{voe11}]\label{thm:voe}
Let $F$ be a field and let $p$ be an integer invertible in $F$. Then for every $n$, the Galois symbol
\begin{align*}
K_n(F)/p &\rightarrow  H^n(F, (\mathbb{Z}/p)(n))\\
\{x_1, x_2, \cdots, x_n\} &\mapsto  (x_1)\cup (x_2)\cup\cdots\cup (x_n)
\end{align*}
is an isomorphism.
\end{thm}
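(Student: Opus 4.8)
The plan is to follow the Voevodsky--Rost proof of the Bloch--Kato conjecture, arguing by induction on $n$. The cases $n=0,1$ are elementary (the first trivial, the second Kummer theory) and $n=2$ is the Merkurjev--Suslin theorem, so I assume $n\ge 3$ and that the statement holds in all degrees $<n$ over every field. First I would make two reductions. It suffices to take $p$ a prime: the mod-$m$ case follows from the mod-$p$ cases by the Chinese remainder theorem and a d\'evissage through the coefficient sequences $0\to\mathbb{Z}/p\to\mathbb{Z}/p^{a}\to\mathbb{Z}/p^{a-1}\to 0$. And it suffices to assume $\mu_p\subset F$: pass to $F(\mu_p)$, whose degree over $F$ divides $p-1$ and is therefore prime to $p$, and use that $\mathrm{cor}\circ\mathrm{res}$ is multiplication by that degree on both $K_n(F)/p$ and $H^n(F,(\mathbb{Z}/p)(n))$, hence an isomorphism mod $p$; a diagram chase then transports injectivity and surjectivity back down to $F$.

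Next I would recast everything in motivic cohomology. The Galois symbol factors as $K_n(F)/p\cong H^{n,n}_{\mathrm{mot}}(F,\mathbb{Z}/p)\to H^{n}_{\acute{e}t}(F,\mu_p^{\otimes n})$, and the full assertion (in all degrees $\le n$, for all fields) is equivalent to the Beilinson--Lichtenbaum property $\mathrm{BL}(n)$: the change-of-topology map $H^{a,b}_{\mathrm{mot}}(-,\mathbb{Z}/p)\to H^{a}_{\acute{e}t}(-,\mu_p^{\otimes b})$ is an isomorphism for $a\le b$ and a monomorphism for $a=b+1$. The induction hypothesis is then precisely $\mathrm{BL}(n-1)$, and the task is to deduce $\mathrm{BL}(n)$ from it.

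The engine of that deduction is the theory of norm varieties and Rost motives. Given a nonzero symbol $\alpha=\{a_1,\dots,a_n\}\in K_n(F)/p$, Rost's construction furnishes a \emph{norm variety}: a smooth projective $F$-variety $X$ of dimension $p^{n-1}-1$ (a $\nu_{n-1}$-variety) that splits $\alpha$ and whose \v{C}ech simplicial scheme $\check{C}(X)$ carries, after an appropriate completion, the Rost motive $\mathcal{M}_{n-1}$ as a direct summand with explicitly computed motivic cohomology. One then argues by contradiction: a nonzero $\alpha$ in the kernel of the norm residue map (surjectivity is handled dually) would survive to the motivic cohomology of $\check{C}(X)$, and then the motivic Milnor operations $Q_0,\dots,Q_{n-1}$ — built from the motivic Bockstein and Steenrod operations $P^{k}$ via the motivic Adem relations — applied against the known structure of $H^{*,*}_{\mathrm{mot}}(\mathcal{M}_{n-1})$ produce a class violating $\mathrm{BL}(n-1)$. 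One also uses that $\check{C}(X)$ is $\mathbb{A}^1$-equivalent to a point precisely when $X$ has a rational point.

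The deepest ingredient, and the main obstacle, is the \emph{existence of norm varieties with the required motivic structure}: Rost's degree-$p^{n-1}$ splitting varieties, the attendant nilpotence, and the identification of $\mathcal{M}_{n-1}$ as a direct summand with the prescribed cohomology. Close behind is Voevodsky's construction of the motivic Steenrod algebra over a general base field, with the accompanying computations (of $H^{*,*}_{\mathrm{mot}}(B\mu_p)$ and of the operations' action on it). Granting these, the inductive bookkeeping $\mathrm{BL}(n-1)\Rightarrow\mathrm{BL}(n)$ is formal, if intricate. For the purposes of this paper none of this is reproved: the theorem is invoked as \cite{merk_suslin}, \cite{voe03}, \cite{voe11} (together with the complementary work of Rost and the exposition of Haesemeyer--Weibel) and used as a black box to pass freely between mod-$p$ Milnor $K$-theory and Galois cohomology whenever $p$ is invertible in the field — as it is for the characteristic-zero field $\mathcal{K}$ considered here.
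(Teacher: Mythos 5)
The paper does not prove this statement at all: it is quoted in the Preliminaries as the Norm Residue Isomorphism Theorem and used as a black box, with the proof deferred entirely to \cite{merk_suslin}, \cite{voe03}, \cite{voe11}. So there is no in-paper argument to compare against. Your proposal is a faithful high-level roadmap of the actual Voevodsky--Rost proof: the reduction to prime $p$ and to $\mu_p\subset F$ via transfer, the reformulation as the Beilinson--Lichtenbaum property and the induction $\mathrm{BL}(n-1)\Rightarrow\mathrm{BL}(n)$, and the use of norm varieties, Rost motives, and the motivic Steenrod/Milnor operations are all the correct ingredients, and you correctly flag that the existence of $\nu_{n-1}$-varieties with the Rost motive as a summand and the construction of the motivic Steenrod algebra are the deep inputs. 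To be clear, though, what you have written is an outline and not a proof: each of the steps you name (in particular the norm-variety existence theorem and the computation of the motivic cohomology of the Rost motive) is itself a major theorem occupying substantial parts of the cited literature, so the sketch cannot be verified as self-contained. Your closing observation --- that for the purposes of this paper the theorem is simply invoked to pass between $K_n(F)/p$ and Galois cohomology when $p$ is invertible --- is exactly how the paper treats it, and is the appropriate level of engagement here.
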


Similarly by results of Bloch, Kato and Gabber we can identify the mod-$p$ Milnor $K$-theory of a field $F$ of characteristic $p$ with the group of logarithmic K\"ahler differentials.
    \begin{thm}[Bloch-Gabber-Kato theorem \cite{blochkato}]
    Let $F$ be a field of characteristic $p>0$. Then for every $n$, the differential symbol
    \begin{align*}
        K_n(F)/p &\rightarrow  H^n(F, (\mathbb{Z}/p)(n)) \cong \Omega^n_{F,\log}\\
        \{x_1, x_2, \cdots, x_n\} &\mapsto d\log x_1\wedge d\log x_2\cdots \wedge d\log x_n
    \end{align*}
    is an isomorphism.
    \end{thm}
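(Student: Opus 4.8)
This is the Bloch--Kato--Gabber theorem, and I would prove it along its classical lines. Using the identification $H^n(F,(\mathbb{Z}/p)(n))\cong\Omega^n_{F,\log}$ recorded in \S\ref{sec:prelim}, the assertion is equivalent to the statement that the differential symbol $d\log\colon K_n(F)/p\to\Omega^n_{F,\log}$ is an isomorphism, and this I would establish in the three standard steps: well-definedness, surjectivity, and injectivity, the last being where essentially all the difficulty lies.

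For well-definedness, on $(F^{\times})^{\otimes n}$ the assignment $a_1\otimes\cdots\otimes a_n\mapsto\frac{da_1}{a_1}\wedge\cdots\wedge\frac{da_n}{a_n}$ annihilates every Steinberg symbol, since its image contains the factor $\frac{da}{a}\wedge\frac{d(1-a)}{1-a}=-\frac{1}{a(1-a)}\,da\wedge da=0$, and it kills $p$-th powers, since $d\log(a^p)=p\,d\log a=0$ in characteristic $p$; hence it descends to $d\log$ on $K_n(F)/p$. Surjectivity is then immediate, because by definition $\Omega^n_{F,\log}$ is the subgroup of $\Omega^n_F$ generated by the logarithmic forms $\frac{da_1}{a_1}\wedge\cdots\wedge\frac{da_n}{a_n}$, which is precisely the image of $d\log$.

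Injectivity is the substance. Since $K_n(-)/p$ and $\Omega^n_{-,\log}$ both commute with filtered colimits of fields, one reduces to $F$ finitely generated over $\mathbb{F}_p$ and then argues by induction on $d=\operatorname{trdeg}(F/\mathbb{F}_p)$. The base case $d=0$ is trivial, $F$ being a finite and hence perfect field, so both groups vanish for $n\geq 1$. For $d\geq 1$, present $F$ as a finite extension of a rational function field $F_0(t)$ with $F_0$ finitely generated of transcendence degree $d-1$. For the rational step one uses the compatibility of $d\log$ with Milnor's split exact sequence $0\to K_n(F_0)/p\to K_n(F_0(t))/p\to\bigoplus_v K_{n-1}(\kappa(v))/p\to 0$ (the sum over closed points $v$ of $\mathbb{A}^1_{F_0}$, the last map being the tame symbol) and with the analogous residue exact sequence $0\to\Omega^n_{F_0,\log}\to\Omega^n_{F_0(t),\log}\to\bigoplus_v\Omega^{n-1}_{\kappa(v),\log}\to 0$; as each $\kappa(v)$ has transcendence degree $d-1$, the inductive hypothesis applies to the outer terms and the five lemma yields the isomorphism for $F_0(t)$. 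The remaining finite-extension step $F_0(t)\subseteq F$ is handled by factoring it into separable steps --- using a transfer map on both theories together with the projection formula --- and purely inseparable steps $E\subseteq E(a^{1/p})$, for each of which there is an explicit short exact sequence comparing $K_n(-)/p$ and $\Omega^n_{-,\log}$ before and after, compatibly with $d\log$.

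The main obstacle is this injectivity argument, and within it the two genuinely computational points: constructing the residue exact sequence for $\Omega^{\bullet}_{-,\log}$ with normalizations matching the tame-symbol sequence, and controlling the purely inseparable finite steps. Both require Cartier's structure theory together with explicit bookkeeping in terms of a $p$-basis $\{t_i\}$ of $F$, via the presentation of $\Omega^n_F$ as the $n$-th exterior power of the free module $\bigoplus_i F\,dt_i$; this is the one place in the argument where a formal diagram chase does not suffice.
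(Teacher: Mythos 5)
This statement is not proved in the paper at all: it is the classical Bloch--Gabber--Kato theorem, imported verbatim from \cite{blochkato} as background in \S\ref{sec:prelim}, so there is no internal argument to compare yours against. Your sketch of the classical proof gets the routine parts right: the verification that $d\log$ kills Steinberg symbols and $p$-th powers, and surjectivity onto $\Omega^n_{F,\log}$ (which is essentially definitional, granting the identification $H^n(F,(\mathbb{Z}/p)(n))\cong\Omega^n_{F,\log}$ recorded in (\ref{eqn:brauerp})). You also correctly locate all of the substance in injectivity.

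However, the injectivity outline has two concrete problems beyond being a sketch. First, the ``finite-extension step'' cannot be closed by a transfer/projection-formula argument as you describe: the norm $N_{F/L}$ only shows that an element of $\ker\bigl(K_n(F)/p\to\Omega^n_F\bigr)$ has trivial norm in $K_n(L)/p$, and the composite $\mathrm{res}\circ N$ is not multiplication by the degree on $K_n(F)/p$, so injectivity does not descend to $F$ from $L$ this way; this is fatal precisely for the purely inseparable steps, which are forced to have degree $p$. The published arguments avoid this by a different induction --- Bloch--Kato induct on the size of a $p$-basis via the tower of degree-$p$ inseparable subextensions of $F/F^p$, using Cartier-operator filtrations, and Gabber's proof is different again --- rather than by first proving the rational-function-field case and then descending along an arbitrary finite extension. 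Second, the residue exact sequence $0\to\Omega^n_{F_0,\log}\to\Omega^n_{F_0(t),\log}\to\bigoplus_v\Omega^{n-1}_{\kappa(v),\log}\to 0$ is itself a nontrivial theorem (exactness of the Gersten complex for $\nu(n)$ on $\mathbb{A}^1$, due to Gros--Suwa), whose standard proof already uses the Bloch--Gabber--Kato theorem; invoking it here without an independent construction risks circularity. Since the paper treats the theorem as a black box, the appropriate course is to cite \cite{blochkato} rather than to reprove it, but as a standalone proof your proposal would need the induction restructured along the lines above.
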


\subsection{Kato's filtration(\cite{kato_swan},\cite{blochkato})}\label{sec:kato}
Let  $\mathcal{K}$  be a complete discrete valued field of characteristic zero with uniformizer $\pi$ and residue $\kappa$ of characteristic $p$.  Let $U_{\mathcal{K}}^i$ denote the $i$-th unit group of $\mathcal{K}$ i.e.,
\begin{align*}
    U_{\mathcal{K}}^i = \{1+ \pi^i \mathcal{O}_{\mathcal{K}}, \times\}
    \end{align*}
For a non-negative integer $i$, let  $U^{i}K_q(\mathcal{K})$ be the subgroup of $K_q(\mathcal{K})$ generated by the symbols of the form 
\begin{align}\label{eqn:unitk}
\{v, y_2, y_3,\cdots, y_q\}, {~~~~~~~~} v \in U_{\mathcal{K}}^i,  y_j \in \mathcal{K}^{\times}
\end{align}
Assume that $\mathcal{K}$ contains a primitive $p$-th root of unity $\zeta$ so that  $H^q(\mathcal{K}, (\mathbb{Z}/p)(q)) \simeq H^q_p(\mathcal{K})$ and we can identify $K_q(\mathcal{K})/p$ with $H^q_p(\mathcal{K})$ via the norm residue isomorphism. For $i\geq 1$, let $fil^i~H^q_p(\mathcal{K})$ denote the subgroup of $H^q_p(\mathcal{K})$ generated by symbols of the form (\ref{eqn:unitk}). Let
\begin{align*}
   e_{\mathcal{K}}'=  e_{\mathcal{K}}p(p-1)^{-1}
\end{align*}
 where $e_{\mathcal{K}}$ is the absolute ramification index of $\mathcal{K}$. Note that by Lemma \ref{lem:unitpth}, $fil^i =0$ for all $i > e_{\mathcal{K}}'$. Let 
\begin{align*}
gr^0 &= H^q_p(\mathcal{K})/fil^0\\
  gr^i &= fil^i/fil^{i+1}, {~~~~~~~} i\geq 1
\end{align*}
Then we have the following isomorphisms
\begin{align*}
    gr^0 &\cong \Omega_{\kappa, \log}^q \oplus  \Omega_{\kappa, \log}^{q-1}\\
    gr^i &= \begin{cases}
        \Omega_{\kappa}^{q-1} \text{~~if~~} p\nmid i\\
        \Omega_{\kappa}^{q-1}/Z_{\kappa}^{q-1}\oplus \Omega_{\kappa}^{q-2}/Z_{\kappa}^{q-2} \text{~~~~else}
    \end{cases}
    \end{align*}
    \begin{align}\label{eqn:tame}
 gr^{e_{\mathcal{K}}'} &= Im(\gamma_{\pi}: H^q_p(\kappa) \oplus H^{q-1}_p(\kappa) \xrightarrow{(\iota_p^q, \iota_p^{q-1})} H^q_p(\mathcal{K}))
\end{align}
where $Z_{\kappa}^{*}$ denote the subgroup of closed forms and $\gamma_{\pi} := (\iota_p^q, \iota_p^{q-1})$  on each of the direct summands is defined as 
\begin{align}\label{eqn:iota1}
    \iota_p^q: H^q_p(\kappa) &\rightarrow H^q_p(\mathcal{K})\\
    \overline{x}d\log \overline{y}_1 \wedge d\log \overline{y}_2\wedge \cdots d\log \overline{y}_{q-1} &\mapsto (1+ (\zeta -1)^px, y_1, y_2,  \cdots, y_{q-1})\nonumber
\end{align}
and
\begin{align}\label{eqn:iota2}
    \iota_p^{q-1}: H^{q-1}_p(\kappa) &\rightarrow H^q_p(\mathcal{K})\\
    \overline{x}d\log \overline{y}_1 \wedge d\log \overline{y}_2\wedge \cdots d\log \overline{y}_{q-2} &\mapsto (1+ (\zeta -1)^px, y_1, y_2,  \cdots, y_{q-2}, \pi)\nonumber
\end{align}
(In the above equations, a letter with a bar on the top denotes the residue of the corresponding letter without bar.)
\begin{remark}\label{rmk:tame}\normalfont
    The map 
    \begin{align*}
         \gamma_{\pi}: H^q_p(\kappa) \oplus H^{q-1}_p(\kappa) \xrightarrow{\cong} gr^{e_{\mathcal{K}}'}
    \end{align*}
    in (\ref{eqn:tame}) is an isomorphism (\cite[Theorem 2]{kato_discrete}). Moreover, there is also an isomorphism
    \begin{align*}
   gr^{e_{\mathcal{K}}'} \cong H^q_{p,tame}(\mathcal{K})
    \end{align*}
    Here, $H^q_{p,tame}(\mathcal{K}) \subseteq H^q_{p}(\mathcal{K})$ is the subgroup of \emph{tamely ramified} elements given by
    \begin{align*}
        H^q_{p,tame}(\mathcal{K}):= ker(H^q_{p}(\mathcal{K}) \rightarrow H^q_{p}(\mathcal{K}_{nr}))
    \end{align*}
    where $\mathcal{K}_{nr}$ is the maximal unramified extension of $\mathcal{K}$.
    \end{remark}
\subsection{Unramified classes and purity for Brauer groups}
For an integral domain $\mathcal{R}$ with field of fractions $\mathcal{K}$, an element $\alpha \in Br(\mathcal{K})$ is said to be \emph{unramified on $\mathcal{R}$} if it is in the image of the natural map
\begin{align*}
    Br(\mathcal{R})\rightarrow Br(\mathcal{K})
    \end{align*}
    or equivalently if there exists an Azumaya algebra $\mathcal{A}$ over $\mathcal{R}$ such that $\alpha = [\mathcal{A}\otimes_{\mathcal{R}} \mathcal{K}]$.\\
    \indent For a regular integral scheme $\mathcal{X}$ with function field $\mathcal{K}$, we say that $\alpha \in Br(\mathcal{K})$ is \emph{unramified at a point $x\in \mathcal{X}$} if $\alpha$ is unramified over the local ring $\mathcal{O}_{\mathcal{X},x}$.  The \emph{ramification divisor} of $\alpha$ is the  divisor given by the sum of all codimension one points $x\in \mathcal{X}$ where $\alpha$ is ramified (i.e., not unramified). The support of the ramification divisor is called  the  \emph{ramification locus of $\alpha$}. \\
    \indent We have the following  as a consequence of the purity result for Brauer groups:
    \begin{thm}(\cite[Theorem 1.2]{purity}) \label{thm:purity}
        For a Noetherian integral regular scheme $\mathcal{X}$ with function field $\mathcal{K}$,
        \begin{align*}
    H^2_{\acute{e}t}(\mathcal{X}, \mathbb{G}_m) = \bigcap_{x\in \mathcal{X}^{(1)}} H^2_{\acute{e}t}(\mathcal{O}_{\mathcal{X},x}, \mathbb{G}_m)
    \end{align*}
    as subgroups of $H^2_{\acute{e}t}(\mathcal{K}, \mathbb{G}_m)$ where $\mathcal{X}^{(1)}$ denotes the codimension one points of $\mathcal{X}$.
    \end{thm}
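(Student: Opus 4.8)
The statement asserts that the Brauer-type group $H^2_{\acute{e}t}(\mathcal{X},\mathbb{G}_m)$ is precisely the subgroup of $H^2_{\acute{e}t}(\mathcal{K},\mathbb{G}_m)$ consisting of the classes that are unramified at every codimension one point. The plan is to realize both sides of the claimed equality as kernels of residue (ramification) maps and to match them using a Gersten-type resolution on the regular scheme $\mathcal{X}$. The inclusion $\subseteq$ is formal: for each $x \in \mathcal{X}^{(1)}$ the localization $\operatorname{Spec}\mathcal{O}_{\mathcal{X},x} \to \mathcal{X}$ induces a restriction $H^2_{\acute{e}t}(\mathcal{X},\mathbb{G}_m) \to H^2_{\acute{e}t}(\mathcal{O}_{\mathcal{X},x},\mathbb{G}_m)$ compatible with the maps to $H^2_{\acute{e}t}(\mathcal{K},\mathbb{G}_m)$. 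For this to be a statement about subgroups of $H^2_{\acute{e}t}(\mathcal{K},\mathbb{G}_m)$ one needs the restriction maps from $\mathcal{X}$ and from each $\mathcal{O}_{\mathcal{X},x}$ to the generic point to be injective, which holds for regular integral schemes (Grothendieck) and is in any case subsumed by the resolution established below.

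The heart of the matter is the reverse inclusion $\supseteq$, for which I would use the coniveau (Gersten) spectral sequence for \'etale cohomology on $\mathcal{X}$. Since $H^2_{\acute{e}t}(\mathcal{K},\mathbb{G}_m)$ is torsion, I would decompose it into its prime-to-$p$ and $p$-primary parts, where $p$ is the residue characteristic, and in each case replace $\mathbb{G}_m$-cohomology by cohomology with the coefficients $(\mathbb{Z}/n)(1)$ of \S\ref{sec:prelim} via the Kummer sequence. For $n$ invertible on $\mathcal{X}$, Gabber's absolute cohomological purity shows that the coniveau complex is a resolution, yielding, after passing to the colimit over such $n$ and taking degree two, a complex
\begin{align*}
0 \to H^2_{\acute{e}t}(\mathcal{X},\mathbb{G}_m)\{p'\} \to H^2_{\acute{e}t}(\mathcal{K},\mathbb{G}_m)\{p'\} \xrightarrow{(\partial_x)} \bigoplus_{x\in \mathcal{X}^{(1)}} H^1(\kappa(x),\mathbb{Q}/\mathbb{Z})\{p'\}
\end{align*}
that is exact at the middle term, where $\partial_x$ is the ramification map at $x$. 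Exactness identifies the prime-to-$p$ part of $H^2_{\acute{e}t}(\mathcal{X},\mathbb{G}_m)$ with the classes killed by every $\partial_x$. The crucial point hidden in this exactness is that, because $\mathcal{X}$ is regular, there is no additional obstruction coming from points of codimension $\geq 2$; the vanishing of the higher $E_1$-terms is exactly Gabber's purity in codimension $\geq 2$. Running the same argument for each DVR $\mathcal{O}_{\mathcal{X},x}$ identifies $H^2_{\acute{e}t}(\mathcal{O}_{\mathcal{X},x},\mathbb{G}_m)\{p'\}$ with $\ker \partial_x$, so that $\bigcap_x \ker\partial_x$ equals $H^2_{\acute{e}t}(\mathcal{X},\mathbb{G}_m)\{p'\}$, settling the prime-to-$p$ case.

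The main obstacle is the $p$-primary part in residue characteristic $p$, where Gabber's purity is unavailable and $\mathbb{G}_m$ must instead be analyzed through the logarithmic de Rham–Witt sheaves $W_s\Omega^r_{\log}$ that appear in the definition of $(\mathbb{Z}/p^s)(r)$ recalled in \S\ref{sec:prelim}. Here I would invoke the corresponding purity and Gersten resolution for these sheaves, which is precisely the content of the cited \cite{purity}; it produces the same type of exact residue sequence with the character groups replaced by the appropriate $p$-adic coefficients, and thereby identifies the $p$-primary parts of both sides as the common kernel of the residue maps. Combining the prime-to-$p$ and $p$-primary analyses then yields the desired equality of subgroups of $H^2_{\acute{e}t}(\mathcal{K},\mathbb{G}_m)$, which is exactly the assertion of Theorem \ref{thm:purity}.
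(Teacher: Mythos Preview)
The paper does not prove this theorem at all: it is quoted verbatim as \cite[Theorem 1.2]{purity} and used as a black box in \S\ref{sec:higher}. So there is no ``paper's own proof'' to compare against; the only question is whether your sketch stands as an independent argument.

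It does not, because it is circular at the decisive step. Your treatment of the prime-to-$p$ part via Gabber's absolute purity and the coniveau spectral sequence is the standard outline and is fine in spirit. But for the $p$-primary part in residue characteristic $p$ you write that you ``would invoke the corresponding purity and Gersten resolution for these sheaves, which is precisely the content of the cited \cite{purity}.'' That is exactly the theorem you are trying to establish, so nothing has been proved. Moreover, the Gersten-type resolutions for $W_s\Omega^r_{\log}$ that you allude to (Gros--Suwa, Shiho, etc.) are known under smoothness hypotheses over a base of characteristic $p$, not for an arbitrary Noetherian regular scheme in mixed characteristic; this is precisely why purity for the Brauer group in full generality was open until \cite{purity}. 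The argument there does not proceed by assembling a $p$-primary Gersten resolution; it combines Gabber's affine Lefschetz-type results and presentation lemmas with reductions via excellence and completion, and ultimately local cohomological purity statements proved by quite different methods. If you want to give a self-contained proof here, you would have to reproduce that machinery; otherwise, the honest thing is simply to cite the result, as the paper does.
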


\section{Proof of Theorem \ref{thm:main}}\label{sec:mainthm}
We will now prove our main Theorem \ref{thm:main}.  We start with a few lemmas.  Throughout this section $\mathcal{K}$ is an complete discrete valued field with residue $\kappa$ of characteristic $p$. Let $\mathfrak{v}$ denote the discrete valuation. As before,
\begin{align*}
    e_{\mathcal{K}}' = e_{\mathcal{K}}p(p-1)^{-1}
\end{align*}
where $e_{\mathcal{K}}$ is the absolute ramification index of  $\mathcal{K}$.
\begin{lemma} \label{lem:unitpth}
 Let $n \in \mathbb{Z}$ be  such that $n > e'_{\mathcal{K}}$. Then
\begin{align*}
    U_{\mathcal{K}}^{n}\in (\mathcal{O}_{\mathcal{K}}^{\times})^p
    \end{align*}
    \end{lemma}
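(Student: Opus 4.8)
The plan is to reduce everything to the behaviour of the $p$-th power map on the unit filtration, the point being that $e_{\mathcal{K}}' = e_{\mathcal{K}}p(p-1)^{-1}$ is exactly $e_{\mathcal{K}}$ above the ``crossover level'' $e_{\mathcal{K}}(p-1)^{-1}$ at which this map changes its behaviour. Set $m := n - e_{\mathcal{K}}$. Since $n$ and $e_{\mathcal{K}}$ are integers with $n > e_{\mathcal{K}}'$, the integer $m$ satisfies $m > e_{\mathcal{K}}' - e_{\mathcal{K}} = e_{\mathcal{K}}(p-1)^{-1} > 0$, so $m \geq 1$ and $U_{\mathcal{K}}^m$ is a subgroup of $\mathcal{O}_{\mathcal{K}}^{\times}$. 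It therefore suffices to prove that raising to the $p$-th power is surjective from $U_{\mathcal{K}}^m$ onto $U_{\mathcal{K}}^{m + e_{\mathcal{K}}} = U_{\mathcal{K}}^{n}$.

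First I would record the basic local computation: for any integer $j \geq m$ and any $a \in \mathcal{O}_{\mathcal{K}}$, the binomial expansion reads
\[
(1 + \pi^{j} a)^{p} = 1 + p\,\pi^{j} a + \sum_{k=2}^{p-1}\binom{p}{k}\pi^{jk}a^{k} + \pi^{jp}a^{p}.
\]
Here $\mathfrak{v}\big(\binom{p}{k}\pi^{jk}a^{k}\big) \geq e_{\mathcal{K}} + 2j > j + e_{\mathcal{K}}$ for $2 \leq k \leq p-1$, while $\mathfrak{v}(\pi^{jp}a^{p}) = jp$ and $jp - (j + e_{\mathcal{K}}) = (p-1)j - e_{\mathcal{K}} > 0$ because $j \geq m > e_{\mathcal{K}}(p-1)^{-1}$. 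Thus $(1 + \pi^{j}a)^{p} \equiv 1 + p\pi^{j}a \pmod{\mathfrak{m}_{\mathcal{K}}^{\,j + e_{\mathcal{K}} + 1}}$, and since $p\pi^{-e_{\mathcal{K}}}$ is a unit this shows that for every $j \geq m$ the $p$-th power map induces a bijection between the associated graded pieces $U_{\mathcal{K}}^{j}/U_{\mathcal{K}}^{j+1}$ and $U_{\mathcal{K}}^{j + e_{\mathcal{K}}}/U_{\mathcal{K}}^{j + e_{\mathcal{K}} + 1}$ (each being canonically isomorphic to the additive group of $\kappa$).

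Then I would run the standard successive-approximation argument: given $\eta \in U_{\mathcal{K}}^{n}$, the graded statement at level $m$ produces $u_{0} \in U_{\mathcal{K}}^{m}$ with $\eta u_{0}^{-p} \in U_{\mathcal{K}}^{n+1}$; applying it again at levels $m+1, m+2, \dots$ yields $u_{t} \in U_{\mathcal{K}}^{m+t}$ with $\eta (u_0 u_1 \cdots u_t)^{-p} \in U_{\mathcal{K}}^{n+t+1}$ and $\mathfrak{v}(u_{t} - 1) \to \infty$, so that by completeness of $\mathcal{O}_{\mathcal{K}}$ the product $u = \prod_{t \geq 0} u_{t}$ converges in $U_{\mathcal{K}}^{m} \subseteq \mathcal{O}_{\mathcal{K}}^{\times}$ and satisfies $u^{p} = \eta$. (Equivalently, for $m > e_{\mathcal{K}}(p-1)^{-1}$ the series $\log$ and $\exp$ give mutually inverse isomorphisms $U_{\mathcal{K}}^{m} \cong \mathfrak{m}_{\mathcal{K}}^{m}$ carrying the $p$-th power map to multiplication by $p$, i.e.\ to the bijection $\mathfrak{m}_{\mathcal{K}}^{m} \xrightarrow{\ \sim\ } p\,\mathfrak{m}_{\mathcal{K}}^{m} = \mathfrak{m}_{\mathcal{K}}^{m + e_{\mathcal{K}}}$.) Either way $U_{\mathcal{K}}^{n} \subseteq (\mathcal{O}_{\mathcal{K}}^{\times})^{p}$. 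There is no genuine obstacle; the only care needed is tracking the valuation inequalities so that the crossover level $e_{\mathcal{K}}(p-1)^{-1}$ is respected — it is precisely this bookkeeping that forces the hypothesis $n > e_{\mathcal{K}}'$ and makes it sharper than the condition $n > 2e_{\mathcal{K}}$ that a naive application of Hensel's lemma to $T^{p} - \eta$ at $T = 1$ would demand (the two bounds agree when $p = 2$, but $e_{\mathcal{K}}' < 2e_{\mathcal{K}}$ when $p \geq 3$).
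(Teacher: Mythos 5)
Your proof is correct. The paper does not argue this lemma itself --- it simply cites the proof of Lemma 2.4(2) in Kato's paper on Swan conductors --- and your argument (the $p$-th power map inducing multiplication by the unit $p\pi^{-e_{\mathcal{K}}}$ on the graded pieces $U_{\mathcal{K}}^{j}/U_{\mathcal{K}}^{j+1}$ for $j>e_{\mathcal{K}}(p-1)^{-1}$, followed by successive approximation using completeness) is precisely the standard computation that reference carries out, with the valuation bookkeeping done correctly.
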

\begin{proof}
    This is well known. See, for example, the proof of \cite[Lemma 2.4 (2)]{kato_swan}.
\end{proof}
 \begin{defn}
   For a  discrete valued field $\mathcal{K}$ and a positive integer $m$, let   $U^{m}K_2(\mathcal{K})$ be the subgroup of $K_i(\mathcal{K})$ generated by the symbols of the form $\{u, y\}$ where $u \in U_{\mathcal{K}}^m$ and $y \in \mathcal{K}^{\times}$ and let $U^{j_1, j_2}K_2(\mathcal{K})$ be  the subgroup of $K_2(\mathcal{K})$ generated by the symbols of the form $\{y_1, y_2\}$ where $y_i \in U_{\mathcal{K}}^{j_i}$.  
 \end{defn}     
 Note that $\{U^{m}K_2(\mathcal{K})\}$ forms a descending filtration on $K_2(\mathcal{K})$. Now we recall  a lemma of Kato which  plays a crucial role in  the proof of the Key Lemma \ref{lemma:main2}. 
   \begin{lemma}{\cite[Lemma 2]{kato_general})}\label{lem:kato2}
   With notations above,
   \begin{align*}
       U^{i,j}K_2(\mathcal{K}) \subseteq U^{i+j}K_{2}(\mathcal{K})
   \end{align*}
   More precisely, suppose $x \in \mathfrak{m}_{\mathcal{K}}^i$, $y \in \mathfrak{m}_{\mathcal{K}}^j$ for some $i,j \geq 1$ and $x \neq 0$. Then 
       \begin{align*}
       \{1+x, 1+y\} &\equiv \{1+xy, -x^{-1}\} {~~~mod~~} U^{i+j+1}K_2(\mathcal{K})
       \end{align*}
   \end{lemma}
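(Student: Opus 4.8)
The plan is to reduce the set-theoretic inclusion to the sharper congruence, and then to prove the congruence by manipulating the Steinberg relation together with the multiplicativity of Milnor symbols, while keeping careful track of filtration levels.

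First, for the reduction: $U^{i,j}K_2(\mathcal{K})$ is generated by symbols $\{1+x,1+y\}$ with $x\in\mathfrak{m}_{\mathcal{K}}^i$ and $y\in\mathfrak{m}_{\mathcal{K}}^j$. Such a symbol is trivial if $x=0$ or $y=0$; otherwise the congruence rewrites it, modulo $U^{i+j+1}K_2(\mathcal{K})$, as $\{1+xy,-x^{-1}\}$, which lies in $U^{i+j}K_2(\mathcal{K})$ since $1+xy\in U_{\mathcal{K}}^{i+j}$. As the filtration is descending, $U^{i+j+1}K_2(\mathcal{K})\subseteq U^{i+j}K_2(\mathcal{K})$, and the inclusion $U^{i,j}K_2(\mathcal{K})\subseteq U^{i+j}K_2(\mathcal{K})$ follows at once. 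Thus everything reduces to the displayed congruence.

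To prove the congruence I would first record the consequences of Steinberg that do the work: for $t\in\mathcal{K}^\times$ with $t\neq-1$ one has $\{1+t,-t\}=0$ (apply $\{a,1-a\}=0$ to $a=1+t$), hence also $\{1+t,-t^{-1}\}=0$. Put $s=x(1+y)=x+xy$, which lies in $\mathfrak{m}_{\mathcal{K}}^i$, is nonzero and is $\neq-1$. Applying the above to $s$ gives $\{1+x+xy,-x^{-1}\}=\{1+x+xy,1+y\}$, while applying it to $x$ gives $\{1+x,-x^{-1}\}=0$. Factoring $1+x+xy=(1+x)\bigl(1+\tfrac{xy}{1+x}\bigr)$ and expanding both symbols by multiplicativity yields
\[
\{1+x,1+y\}=\bigl\{1+\tfrac{xy}{1+x},-x^{-1}\bigr\}-\bigl\{1+\tfrac{xy}{1+x},1+y\bigr\}.
\]
Then I would write $1+\tfrac{xy}{1+x}=u'(1+xy)$ with $u'\in U_{\mathcal{K}}^{2i+j}$ (clear $1+x$ from the denominator and absorb the resulting higher-order correction), and since $i\geq1$ we have $2i+j\geq i+j+1$, so $u'\in U_{\mathcal{K}}^{i+j+1}$; splitting off the $u'$ factor from each symbol gives
\[
\{1+x,1+y\}\equiv\{1+xy,-x^{-1}\}-\{1+xy,1+y\}\pmod{U^{i+j+1}K_2(\mathcal{K})}.
\]

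The final task — and the only spot requiring a little care — is to see that the leftover term $\{1+xy,1+y\}$ actually lies in $U^{i+j+1}K_2(\mathcal{K})$, and not merely in $U^{i+j}K_2(\mathcal{K})$. For this I would apply the congruence just obtained with $x$ replaced by $xy\in\mathfrak{m}_{\mathcal{K}}^{i+j}$ (the case $y=0$ being trivial): this identifies $\{1+xy,1+y\}$, modulo $U^{i+2j+1}K_2(\mathcal{K})$, with $\{1+xy^2,-(xy)^{-1}\}-\{1+xy^2,1+y\}$; but $1+xy^2\in U_{\mathcal{K}}^{i+2j}$ and $i+2j\geq i+j+1$ because $j\geq1$, so both of these symbols, as well as the modulus $U^{i+2j+1}K_2(\mathcal{K})$, already sit inside $U^{i+j+1}K_2(\mathcal{K})$. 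Hence $\{1+xy,1+y\}\in U^{i+j+1}K_2(\mathcal{K})$, and the previous display collapses to the asserted congruence. I expect this last bit of filtration bookkeeping — the observation that the recursion closes after a single extra step precisely because $j\geq1$ — to be the only subtle point; the rest is routine symbol manipulation with the Steinberg relation.
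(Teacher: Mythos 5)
Your argument is correct and coincides with the standard proof of Kato's Lemma 2, which is exactly what the paper relies on (the paper itself only cites Kato here, but its suppressed proof uses the same Steinberg manipulation $\{1+s,-s^{-1}\}=0$ with $s=x(1+y)$, the same factorization $1+x+xy=(1+x)(1+\tfrac{xy}{1+x})$, and the same one-step recursion on the leftover term). Your filtration bookkeeping at the end, using $j\geq 1$ to close the recursion, is the right and only delicate point, and you have handled it correctly.
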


   \begin{cor}\label{cor:blochkato}
       Let $i,j \in \mathbb{Z}$ be such that $i+j+1 >e_{\mathcal{K}}'$.  Suppose $x \in \mathfrak{m}_{\mathcal{K}}^i$, $y \in \mathfrak{m}_{\mathcal{K}}^j$ and $x \neq 0$. Then in $K_2(\mathcal{K})/p$,
       \begin{align*}
       \{1+x, 1+y\} = \{1+xy, -x^{-1}\}  
       \end{align*}
   \end{cor}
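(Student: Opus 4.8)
The plan is to combine Lemma \ref{lem:kato2} with Lemma \ref{lem:unitpth} and the multilinearity of Milnor symbols. Lemma \ref{lem:kato2} already gives the congruence
\begin{align*}
\{1+x, 1+y\} \equiv \{1+xy, -x^{-1}\} \pmod{U^{i+j+1}K_2(\mathcal{K})},
\end{align*}
so it suffices to show that the subgroup $U^{i+j+1}K_2(\mathcal{K})$ dies in $K_2(\mathcal{K})/p$ under the hypothesis $i+j+1 > e_{\mathcal{K}}'$. In other words, the entire content of the corollary is the observation that $U^{n}K_2(\mathcal{K}) \subseteq p\,K_2(\mathcal{K})$ whenever $n > e_{\mathcal{K}}'$.

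First I would recall that, by definition, $U^{n}K_2(\mathcal{K})$ is generated by symbols $\{u, z\}$ with $u \in U_{\mathcal{K}}^{n}$ and $z \in \mathcal{K}^{\times}$. Since $n = i+j+1 > e_{\mathcal{K}}'$, Lemma \ref{lem:unitpth} gives $U_{\mathcal{K}}^{n} \subseteq (\mathcal{O}_{\mathcal{K}}^{\times})^p$, so we may write $u = v^p$ for some $v \in \mathcal{O}_{\mathcal{K}}^{\times}$. By multilinearity of the Milnor symbol, $\{u, z\} = \{v^p, z\} = p\{v, z\}$, which is zero in $K_2(\mathcal{K})/p$. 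As these symbols generate $U^{n}K_2(\mathcal{K})$, we conclude $U^{i+j+1}K_2(\mathcal{K})$ maps to $0$ in $K_2(\mathcal{K})/p$.

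Finally I would pass the congruence of Lemma \ref{lem:kato2} through the quotient map $K_2(\mathcal{K}) \to K_2(\mathcal{K})/p$: the error term lies in $U^{i+j+1}K_2(\mathcal{K})$, which we have just shown is trivial modulo $p$, hence $\{1+x, 1+y\} = \{1+xy, -x^{-1}\}$ in $K_2(\mathcal{K})/p$. There is no real obstacle here; the only point requiring care is making sure the generators of $U^{i+j+1}K_2(\mathcal{K})$ are exactly the symbols $\{u,z\}$ with $u$ a higher unit, so that the $p$-divisibility argument applies to a generating set, and that $n = i+j+1$ (and not merely $i+j$) is what exceeds $e_{\mathcal{K}}'$ — which is precisely the form in which Lemma \ref{lem:kato2} supplies the error term.
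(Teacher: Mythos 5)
Your proposal is correct and takes essentially the same route as the paper: the paper's proof is exactly the one-line deduction from Lemma \ref{lem:kato2} together with the fact that $U^{i+j+1}K_2(\mathcal{K})/p = 0$ when $i+j+1 > e_{\mathcal{K}}'$, which follows from Lemma \ref{lem:unitpth}. Your write-up merely makes explicit the (correct) verification that the generators $\{u,z\}$ with $u \in U_{\mathcal{K}}^{i+j+1} \subseteq (\mathcal{O}_{\mathcal{K}}^{\times})^p$ become $p$-divisible, hence vanish mod $p$.
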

   \begin{proof}
       This follows from  Lemma \ref{lem:kato2} and the fact that $U^{i+j+1}K_2(\mathcal{K})/p =0$ (Lemma \ref{lem:unitpth}).
       \end{proof}

         For an $n$-tuple $\mathbf{u} = (u_1, u_2, \cdots, u_n) \in \mathcal{O}_\mathcal{K}^{\oplus n}$, let $\overline{\mathbf{u}} = (\overline{u}_1, \overline{u}_2, \cdots, \overline{u}_n) \in \kappa^{\oplus n}$. Given  $\mathbf{s} = (s_1, s_2,\dots s_n)\in \mathbb{F}_p^{\oplus n}$, let $\mathbf{u}^{\mathbf{s}}$ (resp.  $\overline{\mathbf{u}}^{\mathbf{s}}$) denote the product $u_1^{s_1}u_2^{s_2}\cdots u_n^{s_n} \in \mathcal{O}_\mathcal{K}$ (resp. $\overline{u}_1^{s_1}\overline{u}_2^{s_2}\cdots \overline{u}_n^{s_n} \in \kappa$). Let $\ceil{.}$ denote the ceil function.
\begin{lemma}\label{lem:unit}
    Let $\tilde{\mathcal{B}} = \{u_1, u_2, \cdots, u_n\}$ be a lift of some $p$-basis of $\kappa$ and let $\pi$ denote  a uniformizer of $\mathcal{K}$. Then any $\alpha \in \mathcal{O}_{\mathcal{K}}^{\times}$ is of the form
    \begin{align}\label{eqn:alpha}
        \alpha = \sum_{j=0}^{\ceil{e'_\mathcal{K}}}\sum_{\mathbf{s} \in \mathbb{F}_p^n } a_{\mathbf{s}}^p\mathbf{u}^{\mathbf{s}} \pi^j (mod~ U_{\mathcal{K}}^{\ceil*{e'_{\mathcal{K}}}+1})
    \end{align}
    for some $a_{\mathbf{s}} \in \mathcal{O}_{\mathcal{K}}$.
\end{lemma}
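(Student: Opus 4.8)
The plan is to build the approximation $(\ref{eqn:alpha})$ one $\pi$-adic layer at a time, each layer being handled by a single application of the defining property of a $p$-basis. Since $\{\overline{u}_1,\dots,\overline{u}_n\}$ is a $p$-basis of $\kappa$, the monomials $\overline{\mathbf{u}}^{\mathbf{s}}$ with $\mathbf{s}\in\mathbb{F}_p^{n}$ form a basis of $\kappa$ as a vector space over $\kappa^p$; in particular every $c\in\kappa$ can be written $c=\sum_{\mathbf{s}}\overline{c}_{\mathbf{s}}^{\,p}\,\overline{\mathbf{u}}^{\mathbf{s}}$ with $\overline{c}_{\mathbf{s}}\in\kappa$, and each $\overline{c}_{\mathbf{s}}$ lifts to an element of $\mathcal{O}_{\mathcal{K}}$. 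Note also that each $u_i\in\mathcal{O}_{\mathcal{K}}^{\times}$, so every monomial $\mathbf{u}^{\mathbf{s}}$ is a unit.

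First I would isolate the single step: given any $\beta\in\mathcal{O}_{\mathcal{K}}$, write $\overline{\beta}=\sum_{\mathbf{s}}\overline{c}_{\mathbf{s}}^{\,p}\,\overline{\mathbf{u}}^{\mathbf{s}}$ as above, lift each $\overline{c}_{\mathbf{s}}$ to some $a_{\mathbf{s}}\in\mathcal{O}_{\mathcal{K}}$, and observe that $\beta-\sum_{\mathbf{s}}a_{\mathbf{s}}^{p}\mathbf{u}^{\mathbf{s}}$ reduces to $0$ in $\kappa$, hence lies in $\mathfrak{m}_{\mathcal{K}}=\pi\mathcal{O}_{\mathcal{K}}$. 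This uses nothing but surjectivity of $\mathcal{O}_{\mathcal{K}}\to\kappa$ and the fact that $\mathfrak{m}_{\mathcal{K}}$ is principal. Now apply this with $\beta_0:=\alpha$ and iterate: at stage $j$ apply it to $\beta_j$ to get $\beta_j=\sum_{\mathbf{s}}a_{\mathbf{s},j}^{p}\mathbf{u}^{\mathbf{s}}+\pi\beta_{j+1}$ with $\beta_{j+1}\in\mathcal{O}_{\mathcal{K}}$ (here $a_{\mathbf{s},j}$ is the element denoted $a_{\mathbf{s}}$ in $(\ref{eqn:alpha})$). A trivial induction on $m$ then yields $\alpha=\sum_{j=0}^{m}\sum_{\mathbf{s}}a_{\mathbf{s},j}^{p}\mathbf{u}^{\mathbf{s}}\pi^{j}+\pi^{m+1}\beta_{m+1}$ for every $m\ge 0$.

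Finally I would take $m=\ceil{e'_{\mathcal{K}}}$. Then $\alpha$ and $\sum_{j=0}^{\ceil{e'_{\mathcal{K}}}}\sum_{\mathbf{s}}a_{\mathbf{s},j}^{p}\mathbf{u}^{\mathbf{s}}\pi^{j}$ differ by an element of $\pi^{\ceil{e'_{\mathcal{K}}}+1}\mathcal{O}_{\mathcal{K}}$; since $\alpha\in\mathcal{O}_{\mathcal{K}}^{\times}$, both are units with the same nonzero residue, so their quotient lies in $1+\pi^{\ceil{e'_{\mathcal{K}}}+1}\mathcal{O}_{\mathcal{K}}=U_{\mathcal{K}}^{\ceil{e'_{\mathcal{K}}}+1}$, which is exactly the claimed congruence.

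I do not expect a genuine obstacle here: the argument is a routine layer-by-layer approximation. The only points I would spell out are that additive congruence modulo $\pi^{\ceil{e'_{\mathcal{K}}}+1}$ coincides with multiplicative congruence modulo $U_{\mathcal{K}}^{\ceil{e'_{\mathcal{K}}}+1}$ for units, and — with an eye to the later sections — that truncating the expansion at $j=\ceil{e'_{\mathcal{K}}}$ loses nothing, since $U_{\mathcal{K}}^{\ceil{e'_{\mathcal{K}}}+1}\subseteq(\mathcal{O}_{\mathcal{K}}^{\times})^{p}$ by Lemma \ref{lem:unitpth} and the tail is in any case absorbed into the $p$-th-power coefficients.
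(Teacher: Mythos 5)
Your proposal is correct and follows essentially the same route as the paper: expand the residue over the $p$-basis monomials, lift, peel off powers of $\pi$ recursively, truncate at $j=\ceil{e'_{\mathcal{K}}}$, and convert the additive error into a multiplicative unit in $U_{\mathcal{K}}^{\ceil*{e'_{\mathcal{K}}}+1}$ by factoring out the (unit) partial sum. The only cosmetic difference is that you strip exactly one power of $\pi$ per step while the paper extracts the exact valuation $n_j$ at each stage; both yield the same expansion.
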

\begin{proof}
     Let 
 \begin{align*}
 \overline{\alpha} = \sum_{\mathbf{s} \in \mathbb{F}_p^n } \overline{a}_{1\mathbf{s}}^{p}\mathbf{\overline{u}}^{\mathbf{s}} ,\text{~~~~~~~~}\overline{a}_{1\mathbf{s}} \in \kappa
 \end{align*}
 For every $\mathbf{s}$, let $a_{1\mathbf{s}}$ denote a lift of $\overline{a}_{1\mathbf{s}}$ in $\mathcal{O}_{\mathcal{K}}$. Then we can write $\alpha$ as 
 \begin{align*}
     \alpha  =  \sum_{\mathbf{s} \in \mathbb{F}_p^n } a_{1\mathbf{s}}^{p}\mathbf{u}^{\mathbf{s}} + \alpha_1\pi^{n_1}
 \end{align*}
 for some $\alpha_1 \in \mathcal{O}_{\mathcal{K}}^{\times}$ and $n_1 >0$.  Repeating the above step, we can $\alpha_1$ as 
 \begin{align*}
          \alpha_1  =  \sum_{\mathbf{s} \in \mathbb{F}_p^n } a_{2\mathbf{s}}^{p}\mathbf{u}^{\mathbf{s}} + \alpha_2 \pi^{n_2}
 \end{align*}
 for some $\alpha_2 \in \mathcal{O}_{\mathcal{K}}^{\times}$, $a_{2\mathbf{s}} \in \mathcal{O}_{\mathcal{K}}$  and $n_2 >0$. Similarly express $\alpha_2$ as above and repeat the process for sufficiently many steps.
 Then we plug-in the expression for $\alpha_{i+1}$ in the previous expression for $\alpha_i$ recursively to get 
 \begin{align*}
        \alpha = \sum_{j=0}^{\ceil{e'_\mathcal{K}}}\sum_{\mathbf{s} \in \mathbb{F}_p^n } a_{\mathbf{s}}^{p}\mathbf{u}^{\mathbf{s}} \pi^j  + \gamma
    \end{align*}
for some $a_{\mathbf{s}} \in \mathcal{O}_{\mathcal{K}}$ and $\gamma \in \mathfrak{m}_{\mathcal{K}}^{\ceil*{e'_\mathcal{K}}+1}$. Since $\alpha \in \mathcal{O}_{\mathcal{K}}^{\times}$, 
\begin{align*}
    \sum_{j=0}^{\ceil{e'_\mathcal{K}}}\sum_{\mathbf{s} \in \mathbb{F}_p^n } a_{\mathbf{s}}^{p}\mathbf{u}^{\mathbf{s}} \pi^j \in \mathcal{O}_{\mathcal{K}}^{\times}
\end{align*}
and we get 
\begin{align*}
       \alpha &= (\sum_{j=0}^{\ceil{e'_\mathcal{K}}}\sum_{\mathbf{s} \in \mathbb{F}_p^n } a_{\mathbf{s}}^{p}\mathbf{u}^{\mathbf{s}} \pi^j)  (1+ \frac{\gamma}{\sum_{j=0}^{\ceil{e'_\mathcal{K}}}\sum_{\mathbf{s} \in \mathbb{F}_p^n } a_{\mathbf{s}}^{p}\mathbf{u}^{\mathbf{s}} \pi^j} ) \\
       &=\sum_{j=0}^{\ceil{e'_\mathcal{K}}}\sum_{\mathbf{s} \in \mathbb{F}_p^n } a_{\mathbf{s}}^p\mathbf{u}^{\mathbf{s}} \pi^j (mod~ U_{\mathcal{K}}^{\ceil*{e'_{\mathcal{K}}}+1})
\end{align*}
\end{proof}

\begin{lemma} \label{lem:pth}
     Let $\gamma\in \mathcal{O}_{\mathcal{K}}^{\times}$  be a finite sum of $p$-th powers  in $\mathcal{O}_{\mathcal{K}}$, i.e.,
    \begin{align*}
        \gamma = \sum_i b_i^p
    \end{align*}
    for some $b_i \in \mathcal{O}_{\mathcal{K}}$. Then 
    \begin{align*}
        \gamma \in (\mathcal{O}_{\mathcal{K}}^{\times})^p (mod ~ U_{\mathcal{K}}^{e_\mathcal{K}})
    \end{align*}
\end{lemma}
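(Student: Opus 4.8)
The plan is to reduce everything to the residue field via the Frobenius and then control the error term using the absolute ramification index. First I would set $c = \sum_i b_i \in \mathcal{O}_{\mathcal{K}}$. Passing to the residue field, where the characteristic is $p$, additivity of the Frobenius gives $\overline{\gamma} = \sum_i \overline{b_i}^{\,p} = \bigl(\sum_i \overline{b_i}\bigr)^{p} = \overline{c}^{\,p}$. Since $\gamma$ is a unit, $\overline{\gamma} \neq 0$, so $\overline{c} \neq 0$ and hence $c \in \mathcal{O}_{\mathcal{K}}^{\times}$.

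Next I would estimate the difference $\gamma - c^p = \sum_i b_i^p - \bigl(\sum_i b_i\bigr)^{p}$. Expanding the $p$-th power by the multinomial theorem, every ``mixed'' term carries a multinomial coefficient $\binom{p}{a_1,\dots,a_n}$ with all $a_j < p$, which is divisible by $p$ because $p$ is prime; the only surviving pure terms are the $b_i^p$ themselves. Hence $\bigl(\sum_i b_i\bigr)^{p} \equiv \sum_i b_i^p \pmod{p\,\mathcal{O}_{\mathcal{K}}}$, i.e. $\gamma - c^p \in p\,\mathcal{O}_{\mathcal{K}}$. Since $\mathfrak{v}(p) = e_{\mathcal{K}}$ by definition of the absolute ramification index, $p\,\mathcal{O}_{\mathcal{K}} = \mathfrak{m}_{\mathcal{K}}^{e_{\mathcal{K}}}$, so $\gamma - c^p \in \mathfrak{m}_{\mathcal{K}}^{e_{\mathcal{K}}}$.

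Finally, since $c \in \mathcal{O}_{\mathcal{K}}^{\times}$, I would divide: $\gamma c^{-p} = 1 + (\gamma - c^p)c^{-p} \in 1 + \mathfrak{m}_{\mathcal{K}}^{e_{\mathcal{K}}} = U_{\mathcal{K}}^{e_{\mathcal{K}}}$. This says precisely that $\gamma \equiv c^p \pmod{U_{\mathcal{K}}^{e_{\mathcal{K}}}}$, i.e. $\gamma \in (\mathcal{O}_{\mathcal{K}}^{\times})^p \pmod{U_{\mathcal{K}}^{e_{\mathcal{K}}}}$, which is the assertion.

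The argument is short and the only point demanding a little care — the closest thing to an obstacle — is the bookkeeping in the second step: one must check that every cross term of the multinomial expansion really vanishes modulo $p$ (this is exactly where primality of $p$ is used), and that the valuation normalization in force gives $\mathfrak{v}(p) = e_{\mathcal{K}}$, so that $p\,\mathcal{O}_{\mathcal{K}}$ equals $\mathfrak{m}_{\mathcal{K}}^{e_{\mathcal{K}}}$ rather than being merely contained in it. Everything else is formal, and no hypothesis on $p$ beyond primality (in particular, nothing about $p=2$) is needed.
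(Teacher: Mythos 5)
Your proof is correct and follows essentially the same route as the paper: write $c=\sum_i b_i$, observe $\gamma - c^p \in p\,\mathcal{O}_{\mathcal{K}} = \mathfrak{m}_{\mathcal{K}}^{e_{\mathcal{K}}}$, and factor out $c^p$ to land in $U_{\mathcal{K}}^{e_{\mathcal{K}}}$. Your explicit check that $c$ is a unit (via the residue field) is a detail the paper leaves implicit, but the argument is the same.
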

\begin{proof}
    This is because
    \begin{align*}
      \gamma=  \sum_i b_i^p &\in (\sum_i b_i)^p - p\mathcal{O}_{\mathcal{K}}\\
              &\in (\sum_i b_i)^p + \mathfrak{m}_{\mathcal{K}}^{e_\mathcal{K}}\\
        &\in (\sum_i b_i)^p (1 + \frac{1}{(\sum_i b_i)^p}\mathfrak{m}_{\mathcal{K}}^{e_\mathcal{K}})\\
        &\in (\mathcal{O}_{\mathcal{K}}^{\times})^p (mod ~ U_{\mathcal{K}}^{e_\mathcal{K}})
    \end{align*}
\end{proof}
Recall the definition $\iota_p^q$ from (\ref{eqn:iota1}) and (\ref{eqn:iota2}) in \S\ref{sec:kato}.
\begin{lemma}\label{lem:tame2}
   Suppose $p= char~\kappa =2$  and $2\mid e_{\mathcal{K}}$. Let $\alpha, \beta \in \mathcal{O}_{\mathcal{K}}^{\times}$  be a finite sum of squares  of elements  in $\mathcal{O}_{\mathcal{K}}$.  Then $\{\alpha, \beta\} \in K_2(\mathcal{K})/2$ is in the image of $\iota_2^2$ defined in (\ref{eqn:iota1}). 
\end{lemma}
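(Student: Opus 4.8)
The plan is to rewrite $\{\alpha,\beta\}$, using only the symbol identities already established in this section, as a single symbol $\{u,y\}$ with $u\in U^{e'_{\mathcal{K}}}_{\mathcal{K}}$ and $y\in\mathcal{O}_{\mathcal{K}}^{\times}$; by the definition of $\iota_2^2$ in $(\ref{eqn:iota1})$, together with the elementary identity $U^{e'_{\mathcal{K}}}_{\mathcal{K}}=1+(\zeta-1)^2\mathcal{O}_{\mathcal{K}}$ (which holds because $\zeta=-1$ and $\mathfrak{v}(4)=2e_{\mathcal{K}}=e'_{\mathcal{K}}$), any such symbol lies in the image of $\iota_2^2$.

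First I would clear the squares. By Lemma \ref{lem:pth} we may write $\alpha=B^2u$ and $\beta=C^2v$ with $B,C\in\mathcal{O}_{\mathcal{K}}^{\times}$ and $u,v\in U^{e_{\mathcal{K}}}_{\mathcal{K}}$; expanding $\{\alpha,\beta\}$ bilinearly in $K_2(\mathcal{K})/2$ and using $\{B^2,-\}=\{-,C^2\}=0$ reduces the problem to a symbol $\{1+x,1+y\}$ with $x,y\in\mathfrak{m}_{\mathcal{K}}^{e_{\mathcal{K}}}$. If $x=0$ or $y=0$ this symbol is already $0$, hence in the image, so assume $x,y\neq 0$. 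Next I would apply Corollary \ref{cor:blochkato} with $i=j=e_{\mathcal{K}}$, which is permissible since $i+j+1=2e_{\mathcal{K}}+1>2e_{\mathcal{K}}=e'_{\mathcal{K}}$, to obtain
\[
\{1+x,1+y\}=\{1+xy,\,-x^{-1}\}\quad\text{in }K_2(\mathcal{K})/2,
\]
where $\mathfrak{v}(xy)=\mathfrak{v}(x)+\mathfrak{v}(y)\geq 2e_{\mathcal{K}}=e'_{\mathcal{K}}$. If $\mathfrak{v}(xy)>e'_{\mathcal{K}}$, then $1+xy\in U^{e'_{\mathcal{K}}+1}_{\mathcal{K}}\subseteq(\mathcal{O}_{\mathcal{K}}^{\times})^2$ by Lemma \ref{lem:unitpth}, so the symbol vanishes and we are done; otherwise $\mathfrak{v}(x)+\mathfrak{v}(y)=2e_{\mathcal{K}}$ together with $\mathfrak{v}(x),\mathfrak{v}(y)\geq e_{\mathcal{K}}$ forces $\mathfrak{v}(x)=\mathfrak{v}(y)=e_{\mathcal{K}}$.

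In this remaining case I would write $x=\pi^{e_{\mathcal{K}}}w$ with $w\in\mathcal{O}_{\mathcal{K}}^{\times}$, so that $-x^{-1}=(-w^{-1})\pi^{-e_{\mathcal{K}}}$ and
\[
\{1+xy,\,-x^{-1}\}=\{1+xy,\,-w^{-1}\}-e_{\mathcal{K}}\{1+xy,\,\pi\}\quad\text{in }K_2(\mathcal{K})/2.
\]
This is the one place the hypothesis $2\mid e_{\mathcal{K}}$ is used: the second term lies in $2K_2(\mathcal{K})$ and so is zero modulo $2$. Finally, since $1+xy\in U^{e'_{\mathcal{K}}}_{\mathcal{K}}=1+(\zeta-1)^2\mathcal{O}_{\mathcal{K}}$ we may write $1+xy=1+(\zeta-1)^2c$ for some $c\in\mathcal{O}_{\mathcal{K}}$, and with $-w^{-1}\in\mathcal{O}_{\mathcal{K}}^{\times}$ the surviving symbol $\{1+xy,\,-w^{-1}\}$ is precisely $\iota_2^2(\overline{c}\,d\log\overline{(-w^{-1})})$, which is what we want.

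The argument is mostly valuation bookkeeping; the substantive observation — and the only genuine obstacle — is recognizing that the "wild" summand $-\mathfrak{v}(x)\{1+xy,\pi\}$ produced by Corollary \ref{cor:blochkato} is harmless unless $\mathfrak{v}(x)=e_{\mathcal{K}}$ exactly (otherwise $1+xy$ is already a square and the whole symbol dies), and that exactly in that case the evenness of $e_{\mathcal{K}}$ annihilates it modulo $2$. The only other point needing a little care is the identification $U^{e'_{\mathcal{K}}}_{\mathcal{K}}=1+(\zeta-1)^2\mathcal{O}_{\mathcal{K}}$, so that symbols $\{1+(\zeta-1)^2c,\,y\}$ with $y$ a unit are literally elements of the image of $\iota_2^2$ as defined in $(\ref{eqn:iota1})$.
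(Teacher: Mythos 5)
Your proposal is correct and follows essentially the same route as the paper: reduce via Lemma \ref{lem:pth} to a symbol $\{1+x,1+y\}$ with $x,y\in\mathfrak{m}_{\mathcal{K}}^{e_{\mathcal{K}}}$, collapse it with Corollary \ref{cor:blochkato}, split into the case where $1+xy$ is already a square and the case $\mathfrak{v}(x)=\mathfrak{v}(y)=e_{\mathcal{K}}$, and use $2\mid e_{\mathcal{K}}$ to discard the uniformizer factor so that the surviving symbol $\{1+(\zeta-1)^2c,\,u'\}$ with $u'$ a unit visibly lies in the image of $\iota_2^2$. Your version just tracks valuations of a general $x$ rather than the explicit factor $2a''$ the paper keeps, which makes the case analysis marginally more transparent but is not a different argument.
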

\begin{proof}
    Let $\delta$ be a uniformizer of $\mathcal{K}$ and let
    \begin{align*}
        \alpha = \sum_i a_i^2\\
        \beta = \sum_i b_i^2
    \end{align*}
    be finite sums for some $a_i, b_i \in \mathcal{O}_{\mathcal{K}}$.  Following the arguments similar to the proof of Lemma \ref{lem:pth},  we get
    \begin{align*}
      \alpha = \sum_i a_i^2 &= (\sum_i a_i)^2 - 2a', {~~~~~~~~}a'\in \mathcal{O}_{\mathcal{K}}\\
      &=    (\sum_i a_i)^2 (1 + 2a''), {~~~~~~~}a''\in \mathcal{O}_{\mathcal{K}}
      \end{align*}
      Similarly we have
      \begin{align*}
          \beta = (\sum_i b_i)^2 (1 + 2b''),  {~~~~~~~~~}b''\in \mathcal{O}_{\mathcal{K}}
      \end{align*}
    Therefore
    \begin{align*}
        \{\alpha, \beta\}= \{1 + 2a'',1 + 2b''\}
    \end{align*}
    Suppose $a''b'' \in \mathfrak{m}_{\mathcal{K}}$, then by  Lemma \ref{lem:unitpth} and Lemma \ref{lem:kato2}, 
    \begin{align*}
    \{\alpha, \beta\} \in U_{\mathcal{K}}^{2e_{\mathcal{K}}+1}K_2(\mathcal{K}) =0 
    \end{align*}
   So the claim is  obviously true.\\
   \indent  Suppose $a''b'' \notin \mathfrak{m}_{\mathcal{K}}$, by Corollary \ref{cor:blochkato},
    \begin{align*}
        \{\alpha, \beta\} &= \{1 + 4u, -(2a'')^{-1}\}\\
        &= \{1 + (-2)^2u, u'\}, {~~~~~~~~~~} u,u' \in \mathcal{O}_{\mathcal{K}}^{\times}
    \end{align*}
    The last equality follows from the hypothesis that $2\mid e_{\mathcal{K}}$. Now it follows from Remark \ref{rmk:tame} that  $\{\alpha, \beta\} \in \iota_2^2(H^2_{\acute{e}t}(\kappa, \mathbb{Z}/2(1)))$. 
    \end{proof}
    For a complete discrete valued field $\mathcal{K}$, let $\mathcal{K}_{nr}$ denote the maximal unramified extension of $\mathcal{K}$. The following is the key lemma to prove Theorem \ref{thm:main}.
\begin{lemma}[Key Lemma]\label{lemma:main2}
    Let $i\geq 2$ and $\mathcal{K}$ be a complete discrete valued field with residue  field $\kappa$ of characteristic $p$. Suppose $\mathfrak{w} =\{w_1,w_2, \cdots, w_i\} \in K_i(\mathcal{K})/p$ be such that each $w_i \in \mathcal{O}_{\mathcal{K}}^{\times}$ and  is a finite sum of $p$-th powers in $\mathcal{O}_{\mathcal{K}}$. Then $\mathfrak{w} = 0$ if $p\neq 2$. If $p=2$, then  $\mathfrak{w} = 0$  for $i\geq 3$  and also for $i=2$ if any of the following conditions hold:
    \begin{enumerate}[(i)]
        \item $Br(\mathcal{K}_{nr}/\mathcal{K})[2] =0$
        \item $Br(\kappa)[2] =0$ and $2 \mid e_{\mathcal{K}}$.
    \end{enumerate}    
\end{lemma}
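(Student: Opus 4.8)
The plan is to strip the $p$-th-power parts off the entries so that $\mathfrak{w}$ sits deep in Kato's unit filtration on $K_i(\mathcal{K})$, and then to iterate Kato's lemma (Lemma~\ref{lem:kato2}) to push $\mathfrak{w}$ past the end of that filtration, where it dies by Lemma~\ref{lem:unitpth}. For the reduction I would use Lemma~\ref{lem:pth}: each $w_j = c_j^{\,p}\,u_j$ with $c_j\in\mathcal{O}_{\mathcal{K}}^{\times}$ and $u_j\in U_{\mathcal{K}}^{e_{\mathcal{K}}}$, and since $\{c_j^{\,p},\dots\} = p\{c_j,\dots\} = 0$ in $K_i(\mathcal{K})/p$, the multiplicativity of Milnor symbols in each slot gives $\mathfrak{w} = \{u_1,\dots,u_i\}$, a symbol all of whose entries lie in $U_{\mathcal{K}}^{e_{\mathcal{K}}}$; i.e.\ $\mathfrak{w}\in U^{e_{\mathcal{K}},\dots,e_{\mathcal{K}}}K_i(\mathcal{K})$.

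Next, apply Lemma~\ref{lem:kato2} to the first two slots: $\{u_1,u_2\}\in U^{e_{\mathcal{K}},e_{\mathcal{K}}}K_2(\mathcal{K})\subseteq U^{2e_{\mathcal{K}}}K_2(\mathcal{K})$, so $\mathfrak{w}$ is an integral combination of symbols $\{v,y,u_3,\dots,u_i\}$ with $v\in U_{\mathcal{K}}^{2e_{\mathcal{K}}}$ and $y\in\mathcal{K}^{\times}$. If $p\neq 2$, then $e'_{\mathcal{K}} = e_{\mathcal{K}}p(p-1)^{-1} < 2e_{\mathcal{K}}$, so $v\in(\mathcal{O}_{\mathcal{K}}^{\times})^{p}$ by Lemma~\ref{lem:unitpth}, and every such symbol vanishes in $K_i(\mathcal{K})/p$; hence $\mathfrak{w}=0$. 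If $p=2$ and $i\geq 3$, use graded-commutativity to rewrite $\{v,y,u_3,u_4,\dots,u_i\} = \pm\{v,u_3,y,u_4,\dots,u_i\}$ and apply Lemma~\ref{lem:kato2} once more to $\{v,u_3\}\in U^{2e_{\mathcal{K}},e_{\mathcal{K}}}K_2(\mathcal{K})\subseteq U^{3e_{\mathcal{K}}}K_2(\mathcal{K})$; since $3e_{\mathcal{K}}>2e_{\mathcal{K}}=e'_{\mathcal{K}}$, the new leading entry lies in $(\mathcal{O}_{\mathcal{K}}^{\times})^{2}$ by Lemma~\ref{lem:unitpth}, so again $\mathfrak{w}=0$.

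This leaves $p=2$, $i=2$. Under hypothesis (ii), since $w_1,w_2$ are sums of squares and $2\mid e_{\mathcal{K}}$, Lemma~\ref{lem:tame2} applies directly: $\mathfrak{w}\in Im(\iota_2^2)$, whose source is $H^2_2(\kappa)\cong Br(\kappa)[2]=0$, so $\mathfrak{w}=0$. Under hypothesis (i), I would instead use the Kato filtration of \S\ref{sec:kato}, which is available since $-1\in\mathcal{K}$ and $e'_{\mathcal{K}}=2e_{\mathcal{K}}\in\mathbb{Z}$, whence $K_2(\mathcal{K})/2\cong H^2_2(\mathcal{K})$: by the previous paragraph $\mathfrak{w}=\{u_1,u_2\}$ lies in $U^{2e_{\mathcal{K}}}K_2(\mathcal{K})/2 = fil^{e'_{\mathcal{K}}}H^2_2(\mathcal{K}) = gr^{e'_{\mathcal{K}}}$, the top graded piece (as $fil^{e'_{\mathcal{K}}+1}=0$). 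Remark~\ref{rmk:tame} identifies $gr^{e'_{\mathcal{K}}}$ with $H^2_{2,tame}(\mathcal{K}) = \ker\!\big(Br(\mathcal{K})[2]\to Br(\mathcal{K}_{nr})\big) = Br(\mathcal{K}_{nr}/\mathcal{K})[2] = 0$; hence $\mathfrak{w}=0$.

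The main obstacle is exactly the $p=2$, $i=2$ case: the unit filtration on $K_2(\mathcal{K})$ has length precisely $e'_{\mathcal{K}}=2e_{\mathcal{K}}$, so the single available application of Kato's lemma pushes $\mathfrak{w}$ into --- but not below --- the top graded piece $gr^{e'_{\mathcal{K}}}$, which is genuinely nonzero in general (cf.\ the example of \S\ref{sec:char2}). Hypotheses (i) and (ii) are the two natural ways to force this tamely ramified obstruction to vanish: directly via $Br(\mathcal{K}_{nr}/\mathcal{K})[2]$, or via $Br(\kappa)[2]$ through the map $\iota_2^2$ when $2\mid e_{\mathcal{K}}$. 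For all other pairs $(p,i)$ there is enough slack in the filtration --- a spare degree when $p\geq 3$, a spare entry to re-run Kato's lemma when $i\geq 3$ --- so the conclusion is unconditional.
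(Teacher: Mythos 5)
Your proposal is correct and follows essentially the same route as the paper: reduce via Lemma \ref{lem:pth} to a symbol with all entries in $U_{\mathcal{K}}^{e_{\mathcal{K}}}$, iterate Kato's Lemma \ref{lem:kato2} to push it into $U^{ie_{\mathcal{K}}}K_i(\mathcal{K})/p$, kill it with Lemma \ref{lem:unitpth} whenever $ie_{\mathcal{K}}>e'_{\mathcal{K}}$, and in the residual case $p=2$, $i=2$ identify the top graded piece with $Br(\mathcal{K}_{nr}/\mathcal{K})[2]$ for (i) and invoke Lemma \ref{lem:tame2} for (ii). Your explicit use of graded-commutativity to justify the recursive application of Kato's lemma for $i\geq 3$ is a welcome clarification of a step the paper leaves implicit, but it is not a different argument.
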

\begin{proof}
    By Lemma \ref{lem:pth}, 
    \begin{align*}
        w_i \in (\mathcal{O}_{\mathcal{K}}^{\times})^p (mod ~ U_{\mathcal{K}}^{e_\mathcal{K}})
    \end{align*}
    for each $i$. First, consider the case $i=2$. In this case,
\begin{align*}
    \mathfrak{w}  \in  U^{e_{\mathcal{K}}, e_{\mathcal{K}}} K_2(\mathcal{K})/p
    \end{align*}
Applying Lemma \ref{lem:kato2},  we see that 
    \begin{align*}
       \mathfrak{w}  \in U^{2e_{\mathcal{K}}}K_2(\mathcal{K})/p
    \end{align*}
   Now, when $i\geq 2$,  we can recursively use Lemma \ref{lem:kato2}  on the symbol $\mathfrak{w}$ as above  to get
    \begin{align*}
        \mathfrak{w} \in U^{ie_{\mathcal{K}}}K_i(\mathcal{K})/p
    \end{align*}
 If  $p\geq 3$, note that 
 \begin{align*}
     ie_{\mathcal{K}}>e'_{\mathcal{K}}, 
 \end{align*}
for all $i\geq 2$, and therefore by Lemma \ref{lem:unitpth}, we conclude that $ \mathfrak{w} =0$. \\
\indent Suppose  $p=2$ and $i\geq 3$. Then  again
    \begin{align*}
     ie_{\mathcal{K}}>e'_{\mathcal{K}}
 \end{align*}
and therefore by Lemma \ref{lem:unitpth}, we conclude that $\mathfrak{w} =0$.\\
\indent Let's now consider the case $p=2$ and $i=2$.  
Note that in this case, $\mathfrak{w}  \in U^{2e_{\mathcal{K}}}K_2(\mathcal{K})/p  = U^{e_{\mathcal{K}}'}K_2(\mathcal{K})/p\cong Br(\mathcal{K}_{nr}/\mathcal{K})[2]$ (Remark \ref{rmk:tame}). This proves (i). The  claim in (ii) follows from Lemma \ref{lem:tame2}.
\end{proof}

We are now ready to prove Theorem \ref{thm:main}:
\begin{proof}[Proof of Theorem \ref{thm:main}]
Suppose we are given $\mathfrak{w} =\{w_1,w_2, \cdots, w_i\} \in K_i(\mathcal{K})/p$. Let $\pi$ denote a uniformizer of $\mathcal{K}$ and let $\{u_1, u_2, \cdots, u_n\}$ denote a lift of  some $p$-basis of $\kappa$. Then $\Lambda =\{u_1, u_2, \cdots, u_n, \pi\}$ is a pseudo-basis of $\mathcal{K}$. Let
    \begin{align*}
        \mathcal{L}= \mathcal{K}(\Lambda^{1/p}) \in \mathfrak{PP}(\mathcal{K})
    \end{align*}
    Multiplying each $w_k$ by a suitable $r$-th power of uniformizer for some $r$ with $p|r$, we may assume without loss of generality that $w_k \in  \mathcal{O}_{\mathcal{K}}$ for all $k$.  Write  $w_k = v_k\pi^{m_k}$  for some $v_k \in \mathcal{O}_{\mathcal{K}}^{\times}$ and $m_k \geq 0$. Then  
    \begin{align*}
        \mathfrak{w}= \{v_1,v_2, \cdots, v_i \} + \Gamma
    \end{align*}
    where  $\Gamma$ is trivial over $\mathcal{K}(\sqrt[p]{\pi}) \subseteq \mathcal{L}$  and hence $Res^i_{\mathcal{L}/\mathcal{K}}(\Gamma) =0$.  So we may assume that $w_k \in \mathcal{O}_{\mathcal{K}}^{\times},  \forall k$. By Lemma \ref{lem:unit}, we have  for each index $k$,
    \begin{align*}
        w_k  &= w_k' \cdot (\sum_{j=0}^{\ceil{e'_\mathcal{K}}}\sum_{\mathbf{s} \in \mathbb{F}_p^n } w_{k\mathbf{s}}^{p}\mathbf{u}^{\mathbf{s}} \pi^j)
    \end{align*}
    for some  $w_{k\mathbf{s}} \in \mathcal{O}_\mathcal{K}$ and $w_k' \in U_{\mathcal{K}}^{\ceil*{e_{\mathcal{K}}'}+1}$.   Since $U_{\mathcal{K}}^{\ceil*{e'_{\mathcal{K}}}+1}\in (\mathcal{O}_{\mathcal{K}}^{\times})^p$ by Lemma \ref{lem:unitpth}, 
    \begin{align*}
        \mathfrak{w} = \{\sum_{j=0}^{\ceil{e'_\mathcal{K}}}\sum_{\mathbf{s} \in \mathbb{F}_p^n } w_{1\mathbf{s}}^{p}\mathbf{u}^{\mathbf{s}} \pi^j,  \sum_{j=0}^{\ceil{e'_\mathcal{K}}}\sum_{\mathbf{s} \in \mathbb{F}_p^n } w_{2\mathbf{s}}^{p}\mathbf{u}^{\mathbf{s}} \pi^j , \cdots, \sum_{j=0}^{\ceil{e'_\mathcal{K}}}\sum_{\mathbf{s} \in \mathbb{F}_p^n } w_{i\mathbf{s}}^{p}\mathbf{u}^{\mathbf{s}} \pi^j  \}
    \end{align*}
    We now note that when we base change to $\mathcal{L}$, each of the summands in the above expression becomes sum of $p^{th}$ powers  of elements in $\mathcal{O}_\mathcal{L}$.  Moreover, note that by the construction of $\mathcal{L}$,   $p \mid e_{\mathcal{L}}$.  Let $\mathfrak{l}$ denote the residue field of $\mathcal{L}$. The claim now follows from Lemma \ref{lemma:main2} together with the observation that $Br(\mathfrak{l})[2] \cong H^2_{\acute{e}t}(\mathfrak{l}, \mathbb{Z}/2(1)) =0$ since   $\mathfrak{l}/\kappa$ is purely inseparable (\cite[Chapter IV, Theorem 4.1.5]{jacobson_fin_dim}). 
\end{proof}

\section{When the residual characteristic is two} \label{sec:char2}
With notations as before, let $\mathcal{L} \in \mathfrak{PP}(\mathcal{K})$. We provide an example to illustrate  that the natural restriction map
  \begin{align*}
 Res_{\mathcal{L}/\mathcal{K}}^2: K_2(\mathcal{K})/2 \rightarrow K_2(\mathcal{L})/2
\end{align*}
need not be zero  if $p=char~\kappa = 2$  and $H^2_{\acute{e}t}(\kappa,\mathbb{Z}/2(1)) \neq 0$.\\
\indent Let $\kappa = \mathbb{F}_2(\mathfrak{a},\mathfrak{b})$ where $\mathbb{F}_2$  denotes the finite field with two elements and $\mathfrak{a},\mathfrak{b}$ are purely transcendental over $\mathbb{F}_2$. Let $\mathcal{K}$ be a complete discrete valued field of characteristic zero with residue $\kappa$ and uniformizer $p=2$.  Such a field exists by \cite[\href{https://stacks.math.columbia.edu/tag/0328}{Tag 0328}]{stacks-project}.  Let $a,b$ be lifts of $\mathfrak{a},\mathfrak{b}$ in $\mathcal{O}_{\mathcal{K}}$. Consider the symbol 
    \begin{align*}
        \{1+a, 1+b\} \in K_2(\mathcal{K})/2
    \end{align*}
    Let $\mathcal{L} = \mathcal{K}(\sqrt{a}, \sqrt{b}, \sqrt{2}) \in \mathfrak{PP}(\mathcal{K})$. Then $\sqrt{2}$ is a uniformizer of $\mathcal{L}$ with residue field $\mathfrak{l} = \mathbb{F}_2(\sqrt{\mathfrak{a}}, \sqrt{\mathfrak{b}})$. Now, 
     \begin{align*}
        \{1+a, 1+b\} \otimes_{\mathcal{K}} \mathcal{L}  &= \{1+\sqrt{a}^2, 1+\sqrt{b}^2\}\\
        &=\{(1+\sqrt{a})^2 - 2 \sqrt{a}, (1+\sqrt{b})^2 - 2 \sqrt{b}\}\\
        &= \{1- 2\sqrt{a}(1+\sqrt{a})^{-2}, 1- 2\sqrt{b}(1+\sqrt{b})^{-2})\}\\
        &=\{1+ 4\sqrt{a}\sqrt{b}(1+\sqrt{a})^{-2}(1+\sqrt{b})^{-2}, (\sqrt{2})^{-2}(\sqrt{a})^{-1}(1+\sqrt{a})^{2}\} \text{~~~~~~~(Corollary \ref{cor:blochkato})}\\
        &=\{1+ 4\sqrt{a}\sqrt{b}(1+\sqrt{a})^{-2}(1+\sqrt{b})^{-2}, \sqrt{a}\} 
 \end{align*}
  By Remark \ref{rmk:tame} and Remark \ref{rmk:brauerp}, the map $\gamma_{\sqrt{2}}^{-1}$  takes the above element to the symbol 
 \begin{align*}
 \alpha = [\sqrt{\mathfrak{a}}\sqrt{\mathfrak{b}}(1+\sqrt{\mathfrak{a}})^{-2}(1+\sqrt{\mathfrak{b}})^{-2}, \sqrt{\mathfrak{a}})  \in  Br(\mathfrak{l})[2] \cong H^2_{\acute{e}t}(\mathfrak{l},\mathbb{Z}/2(1))  
 \end{align*}

 We now show that $ \alpha \neq 0$ which implies $\{1+a, 1+b\}\otimes_{\mathcal{K}} \mathcal{L} \neq 0$.\\
\indent To simplify notation, let $\sqrt{\mathfrak{a}} = v$ and $\sqrt{\mathfrak{b}} =u$  so that $\mathfrak{l} \cong \mathbb{F}_2(u,v)$ as abstract fields where $u,v$ are purely transcendental over $\mathbb{F}_2$. Then
\begin{align*}
\alpha = [uv (1+u)^{-2}(1+v)^{-2}, v)  \in H^2_{\acute{e}t}(\mathfrak{l},\mathbb{Z}/2(1)) \simeq Br(\mathfrak{l})[2]
\end{align*}
Let $\kappa'/\mathfrak{l}$ be the Artin-Schreier extension  given by 
\begin{align*}
    \kappa' \simeq \mathfrak{l}[x]/(x^2+ x - uv (1+u)^{-2}(1+v)^{-2})
\end{align*}
 Then $\alpha$ is trivial if and only if $v \in Norm_{\kappa'/\mathfrak{l}}(\gamma)$ for some  $\gamma \in \kappa'$ (\cite[Chapter 4, Corollary 4.7.5]{GilleSzamuely:2006}). This happens if and only if the polynomial 
\begin{align*}
    z^2 + zw+ uv (1+u)^{-2}(1+v)^{-2}w^2 -v \in \mathfrak{l}[z,w]
\end{align*}
  has a root. Using the change of variables via 
\begin{align*}
    \tilde{z} = z(1+u)(1+v)
\end{align*}
the above polynomial has a root in $\mathfrak{l}$ if and only if 
\begin{align*}
    \tilde{z}^2 +(1+u)(1+v)\tilde{z}w+ uv w^2 = v (1+u)^2(1+v)^2
\end{align*}
has a solution in $\mathfrak{l}$.  Write $\tilde{z} = f/g'$ and $w =h/g'$  for some $f,g',h \in \mathbb{F}_2[u,v], g'\neq 0$. Let $g= g'(1+u)(1+v)$. Then it suffices to show that 
\begin{align*}
    f^2 + (1+u)(1+v)fh + uvh^2 = vg^2
\end{align*}
has no solution in $\mathbb{F}_2[u,v]$.  This follows from the  following proposition.
\begin{prop}
    Let $k$ be a field of characteristic $2$ and let  $\Omega = k(v)$ be a purely transcendental extension of $k$. There exists no $f,g,h \in \Omega[u] $ such that 
    \begin{align}\label{eqn:norm}
    f^2 + (1+u)(1+v)fh + uvh^2 =vg^2
   \end{align}
\end{prop}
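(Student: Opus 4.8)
The plan is to treat \eqref{eqn:norm} as a quadratic form identity in the polynomial ring $\Omega[u]$ (where $\Omega = k(v)$) and derive a contradiction by descent on degrees in $u$. First I would rewrite the left-hand side: over a field of characteristic $2$, the binary form $f^2 + (1+u)(1+v)fh + uvh^2$ is the norm form of the quadratic \'etale $\Omega[u]$-algebra (or rather $\Omega(u)$-algebra) attached to the Artin--Schreier class of $\dfrac{uv}{(1+u)^2(1+v)^2}$, so the equation asserts that $v$ is a norm from this algebra up to the square factor $g^2$. The cleanest route, however, is purely elementary: assume a solution $(f,g,h)$ exists with $g \neq 0$ and with $\max(\deg_u f, \deg_u g, \deg_u h)$ minimal among all solutions, and then specialize $u$ at the two points $u = 0$ and $u = 1$ (and possibly $u = \infty$, i.e. compare leading coefficients) to extract constraints in $\Omega$ itself.

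Concretely, setting $u = 0$ in \eqref{eqn:norm} gives $f(0)^2 + (1+v)f(0)h(0) = v\,g(0)^2$ in $\Omega = k(v)$, and setting $u = 1$ gives $f(1)^2 + v\,h(1)^2 = v\,g(1)^2$, i.e. $f(1)^2 = v\big(g(1)^2 - h(1)^2\big) = v\big(g(1) + h(1)\big)^2$ in characteristic $2$. Since $v$ is not a square in $k(v)$, the last identity forces $f(1) = 0$ and $g(1) = h(1)$. Similarly, comparing the top-degree coefficients in $u$ (the "$u = \infty$" specialization) the leading term of the left side is controlled by $vh^2$ when $\deg_u h$ is large, giving another square-versus-$v$ obstruction. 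The idea is that these boundary conditions let me factor out $(1+u)$ or $u$ from an appropriate combination of $f, g, h$, produce a solution of strictly smaller $u$-degree, and contradict minimality — a Fermat-style infinite descent. One must be careful that the descent step genuinely lowers the degree and does not merely permute the variables; the change of variables $f \mapsto f + (\text{something})h$ completing the square (legitimate after inverting $u(1+u)$, then clearing denominators) is the mechanism that makes the degree drop.

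The main obstacle I anticipate is organizing the descent so that it is uniform: after specialization we learn $f$ vanishes at $u=1$, but to restart we need all three of $f,g,h$ (or a suitable unimodular combination) to share a common factor, and the form $f^2 + (1+u)(1+v)fh + uvh^2$ does not obviously "see" such a factor unless one also tracks the discriminant $(1+u)^2(1+v)^2 - 4uv = (1+u)^2(1+v)^2$ (a perfect square in characteristic $2$!), which is precisely why the Artin--Schreier invariant rather than a Kummer/discriminant invariant governs the problem. So the real content is that the Artin--Schreier class of $uv/\big((1+u)(1+v)\big)^2$ over $\Omega(u)$ is nontrivial and remains nontrivial after the twist by $v$; equivalently, the conic-like equation \eqref{eqn:norm} has no rational point over $\Omega(u)$. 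I would therefore also keep in reserve a more structural argument: pass to the completion of $\Omega(u)$ at the place $u$, where $uv/((1+u)(1+v))^2$ has odd valuation-type behavior relative to the Artin--Schreier filtration, show the class is ramified there, and conclude it cannot be killed by the unramified (at $u$) element $v$. Whichever of the two arguments is cleaner to write, the descent or the ramification computation, will become the body of the proof; I expect the descent to be more self-contained and will attempt that first.
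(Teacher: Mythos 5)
Your specialization at $u=1$ is the correct opening move and matches the paper's first step: reducing the equation modulo $(1+u)$ gives $f(1)^2=v\bigl(g(1)+h(1)\bigr)^2$, and since $v$ is not a square in $\Omega$ this forces $f(1)=0$ and $g(1)=h(1)$. But the proposal stops exactly where the real work begins, and you flag this yourself: knowing only $f(1)=0$ and $g(1)=h(1)$ does not produce a common factor of $f,g,h$, so neither your degree descent nor any gcd contradiction can yet be run. The missing idea is a second-order analysis modulo $(1+u)^2$. Once $(1+u)\mid f$, the cross term and $f^2$ are both divisible by $(1+u)^2=1+u^2$, so the equation forces $(1+u)^2\mid v\bigl(g^2+uh^2\bigr)$. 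Writing $g=(1+u)\tilde g+r_2$, $h=(1+u)\tilde h+r_3$ and $u=1+(1+u)$, one computes $g^2+uh^2\equiv (r_2^2+r_3^2)+(1+u)r_3^2 \pmod{(1+u)^2}$; since $r_2=r_3$ this is $(1+u)r_3^2$, and divisibility by $(1+u)^2$ forces $r_2=r_3=0$. Only at that point does $(1+u)$ divide $h$ (and $g$) as well, which is what contradicts the normalization. The paper normalizes by dividing out $q=\gcd(f,h)$ (noting $q^2\mid vg^2$ hence $q\mid g$) and contradicts $\gcd(f,h)=1$; your minimal-degree normalization would also work once all three polynomials are shown divisible by $(1+u)$, but that conclusion is precisely what you have not established. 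Your auxiliary specializations do not supply it: at $u=0$ the cross term survives and the resulting equation $f(0)^2+(1+v)f(0)h(0)=vg(0)^2$ has nontrivial solutions in $\Omega$ (e.g.\ $f(0)=v$, $h(0)=1$, $g(0)=1$), so it yields no obstruction, and the leading-coefficient comparison at $u=\infty$ is inconclusive when top-degree terms cancel.

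As written, then, the proposal is a plan with the decisive step unexecuted rather than a proof. The alternative ramification/Artin--Schreier argument you hold in reserve is a plausible route (it is essentially the residue computation the paper performs one level up, over $\mathfrak{l}$), but it is likewise only sketched; to count as a proof either branch needs the concrete computation carried through, and for the elementary branch that computation is the $\bmod\ (1+u)^2$ step above.
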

\begin{proof}
    Suppose there exists such  $f,g,h \in \Omega[u] $.  Let $q=gcd(f,h)$. Then it is easy to see that $q^2 \mid g^2$ and hence $q\mid g$. Dividing the above equation by $q^2$, we may assume that $gcd(f,h) =1$. Write
    \begin{align*}
        f&= (1+u)\tilde{f} + r_1\\
        g&= (1+u) \tilde{g} + r_2\\
        h &= (1+u) \tilde{h} + r_3
    \end{align*}
   where $r_1, r_2, r_3 \in \Omega$.  Since $(1+u) \mid f^2 +vg^2 + uvh^2$, we see that $(1+u) \mid (r_1^2 + vr_2^2) + u (vr_3^2)$. Therefore, 
   \begin{align*}
   r_1^2 + vr_2^2 = vr_3^2
   \end{align*}
Since $v$ is not a square in $\Omega$, $r_1=0$ and $r_2=r_3$. This implies that $(1+u) \mid f$. Now by (\ref{eqn:norm}), $(1+u^2) \mid (g^2 + uh^2)$ so that $(1+u^2) \mid (r_2^2 + ur_3^2)$. Hence $r_2 =r_3 =0$. 
In particular, $(1+u) \mid h$. Thus $(1+u) \mid gcd(f,h)$ contradicting the assumption that $gcd(f,h) =1$. 
\end{proof}

The above arguments imply the following theorem:
\begin{thm}\label{thm:char2}
Let $\mathcal{K}$ be a complete discrete valued field of characteristic zero  with uniformizer $2$ and residue $\kappa \simeq \mathbb{F}_2(\mathfrak{a}, \mathfrak{b})$ where $\mathfrak{a}, \mathfrak{b}$ are purely transcendental over $\kappa$. Let $\mathcal{L} \simeq \mathcal{K}(\sqrt{a}, \sqrt{b}, \sqrt{2}) \in \mathfrak{PP}(\mathcal{K})$. Then  the natural restriction map
  \begin{align*}
 Res_{\mathcal{L}/\mathcal{K}}^2: K_2(\mathcal{K})/2 \rightarrow K_2(\mathcal{L})/2
\end{align*}
is not zero.  
\end{thm}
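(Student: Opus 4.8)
The plan is to exhibit a specific element of $K_2(\mathcal{K})/2$ that survives restriction to $\mathcal{L}$. The natural candidate, suggested by the preceding computation, is the symbol $\{1+a, 1+b\}$ where $a, b \in \mathcal{O}_{\mathcal{K}}$ are lifts of the transcendental generators $\mathfrak{a}, \mathfrak{b}$ of $\kappa = \mathbb{F}_2(\mathfrak{a}, \mathfrak{b})$. First I would set up $\mathcal{L} = \mathcal{K}(\sqrt{a}, \sqrt{b}, \sqrt{2})$, verify it lies in $\mathfrak{PP}(\mathcal{K})$ (this is immediate since $\{a, b, 2\}$ is a pseudo-basis, $2$ being the uniformizer and $\{a,b\}$ a lift of the $p$-basis $\{\mathfrak{a}, \mathfrak{b}\}$ of $\kappa$), and record that $\sqrt{2}$ is a uniformizer of $\mathcal{L}$ with residue field $\mathfrak{l} = \mathbb{F}_2(\sqrt{\mathfrak{a}}, \sqrt{\mathfrak{b}})$.

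The core of the argument is the chain of symbol manipulations already displayed: base-changing $\{1+a, 1+b\}$ to $\mathcal{L}$, rewriting $1 + \sqrt{a}^2 = (1+\sqrt{a})^2 - 2\sqrt{a} = \big(1 - 2\sqrt{a}(1+\sqrt{a})^{-2}\big)(1+\sqrt{a})^2$ and similarly for $b$, absorbing the square factors, then applying Corollary \ref{cor:blochkato} to collapse the resulting symbol $\{1 - 2\sqrt{a}(\cdots), 1 - 2\sqrt{b}(\cdots)\}$ into one of the form $\{1 + 4(\cdots), \sqrt{a}\}$. Since $\mathcal{L}$ has absolute ramification index divisible by $2$, this symbol lies in $U^{e'_{\mathcal{L}}}K_2(\mathcal{L})/2$, so by Remark \ref{rmk:tame} it corresponds under $\gamma_{\sqrt{2}}^{-1}$ to a class $\alpha \in Br(\mathfrak{l})[2] \cong H^2_{\acute{e}t}(\mathfrak{l}, \mathbb{Z}/2(1))$, namely $\alpha = [\sqrt{\mathfrak{a}}\sqrt{\mathfrak{b}}(1+\sqrt{\mathfrak{a}})^{-2}(1+\sqrt{\mathfrak{b}})^{-2}, \sqrt{\mathfrak{a}})$ in the notation of Remark \ref{rmk:brauerp}. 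It then suffices to prove $\alpha \neq 0$, since $\gamma_{\sqrt{2}}^{-1}$ is an isomorphism and nonvanishing of $\alpha$ forces $Res^2_{\mathcal{L}/\mathcal{K}}(\{1+a,1+b\}) \neq 0$.

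To show $\alpha \neq 0$ I would rename $v = \sqrt{\mathfrak{a}}$, $u = \sqrt{\mathfrak{b}}$ so $\mathfrak{l} \cong \mathbb{F}_2(u,v)$, and use the criterion for triviality of a cyclic $2$-algebra $[c, v)$: it splits iff $v$ is a norm from the Artin--Schreier extension $\mathfrak{l}(x)/\mathfrak{l}$ with $x^2 + x = c$, equivalently iff the quadratic form $z^2 + zw + cw^2 - v$ represents zero nontrivially over $\mathfrak{l}$. Clearing denominators (via $\tilde z = z(1+u)(1+v)$ and writing everything over $\mathbb{F}_2[u,v]$) reduces this to showing the Diophantine equation
\begin{align*}
f^2 + (1+u)(1+v)fh + uvh^2 = vg^2
\end{align*}
has no nonzero solution $f, g, h \in \mathbb{F}_2[u,v]$ — which is exactly the Proposition proved by the coprimality/divisibility-by-$(1+u)$ descent argument. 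I expect the genuine content to be concentrated in that Proposition: the symbol manipulations are routine applications of Corollary \ref{cor:blochkato} and Steinberg relations, but the arithmetic nonvanishing — carefully tracking divisibility by $1+u$ and $1+u^2$, and using that $v$ is not a square in $\mathbb{F}_2(v)$ — is where the real obstacle lies, and one must be careful that the reduction from "$\alpha$ nonzero" to "no polynomial solution" correctly handles denominators and the substitution $\tilde z = z(1+u)(1+v)$. Assembling these pieces yields Theorem \ref{thm:char2}.
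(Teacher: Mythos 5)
Your proposal follows exactly the paper's own argument: the same candidate symbol $\{1+a,1+b\}$, the same rewriting $1+\sqrt{a}^2=(1+\sqrt{a})^2-2\sqrt{a}$ followed by Corollary \ref{cor:blochkato}, the same identification of the resulting class with $[\sqrt{\mathfrak{a}}\sqrt{\mathfrak{b}}(1+\sqrt{\mathfrak{a}})^{-2}(1+\sqrt{\mathfrak{b}})^{-2}, \sqrt{\mathfrak{a}})$ in $Br(\mathfrak{l})[2]$ via $\gamma_{\sqrt{2}}^{-1}$, and the same reduction through the norm criterion to the unsolvability of $f^2+(1+u)(1+v)fh+uvh^2=vg^2$ over $\mathbb{F}_2[u,v]$. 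This is correct and essentially identical to the paper's proof, including the correct identification of where the real content lies (the polynomial nonvanishing proposition).
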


\begin{remark}\normalfont
The above example illustrates that  unlike the $p\neq 2$ case,  when $p=2$  the collection  $\mathfrak{PP}(\mathcal{K})$  can possibly contain fields that does not split every class in $Br(\mathcal{K})[2]$. However, this does not imply that Conjecture \ref{conj:main} is false for $p=2$. We also note that  by \cite[Theorem 4.16]{nivedita}, the conjecture is true whenever $\mathscr{R}_p(\kappa) \leq 3$. So to check the validity of the conjecture, one can assume $\mathscr{R}_p(\kappa)\geq 4$. 
\end{remark}

\begin{question}\normalfont
  Let $\mathcal{K}$ be a complete discrete valued field of characteristic zero and residue $\kappa= \mathbb{F}_2(\tilde{u_1}, \tilde{u_2}, \tilde{u_3}, \tilde{u_4})$ where $\tilde{u_i}$'s are transcendental. Such a field exists by \cite[\href{https://stacks.math.columbia.edu/tag/0328}{Tag 0328}]{stacks-project}.   Let $u_i$ be lifts of $\tilde{u_i}$ in $\mathcal{O}_{\mathcal{K}}$  and let  $\pi$ be a uniformizer of $\mathcal{K}$.  Consider the following central simple algebra over $\mathcal{K}$:
  \begin{align*}
      A = (u_1,  u_2+ u_3) \otimes_{\mathcal{K}} (u_2, u_3 + u_4) \otimes_{\mathcal{K}} (u_3, u_4 + u_1) \otimes_{\mathcal{K}} (u_4 , u_1 + u_2) \otimes_{\mathcal{K}} (u_1 + u_3, u_2 + u_4) \otimes_{\mathcal{K}} (1-u_1, \pi)
 \end{align*}
Is $A$ a division algebra? An affirmative answer to the question would mean $ind(A) = 2^6$, violating Conjecture \ref{conj:main}. 
\end{question}

\section{Applications to period-index problems} \label{sec:applications}
\subsection{Brauer $p$-dimension of complete discrete valued fields}\label{sec:mainh}
As before, let $\mathcal{K}$ denote a complete discrete valued field with residue $\kappa$ of characteristic $p$.  In this section, we prove Corollary \ref{cor:mainh} which says that the assertions of  Theorem \ref{thm:main} hold if we replace the  mod-$p$ Milnor $K$-theory with  $H^i(\mathcal{K}, \mu_p^{\otimes i-1})$ without requiring that the primitive $p$-th  roots of unity are in $\mathcal{K}$ i.e., we show that  for any $\mathcal{L} \in \mathfrak{PP}(\mathcal{K})$
 \begin{align*}
   Res_{\mathcal{L}/\mathcal{K}}^i:     H^i(\mathcal{K}, \mu_p^{\otimes i-1}) \rightarrow H^i(\mathcal{L}, \mu_p^{\otimes i-1})
        \end{align*}
  is zero if
  \begin{align*}
Res_{\mathcal{L}/\mathcal{K}}^i: K_i(\mathcal{K})/p \rightarrow K_i(\mathcal{L})/p
\end{align*}
is zero.
  
The case $p=2$ directly follows from the norm residue isomorphism (Theorem \ref{thm:voe}) and from the naturality of the Galois symbol.  It is also true for $p\neq 2$ as we show below. The idea is to first  assume the presence of primitive $p$-th roots of unity in $\mathcal{K}$ so that we can  apply the norm residue isomorphism theorem and  then use a standard restriction corestriction argument to show that the assertions hold even without the assumption on the presence of roots of unity. We begin with the following lemma:
\begin{lemma}\label{lem:unity}
Let $\mathcal{E}/\mathcal{K}$ be a finite extension of degree coprime to $p$. Suppose $\mathcal{L} \in \mathfrak{PP}(\mathcal{K})$. Then $\mathcal{L} \mathcal{E} \in \mathfrak{PP}(\mathcal{E})$.
\end{lemma}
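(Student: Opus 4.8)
The plan is to produce an explicit pseudo-basis $\Lambda'$ of $\mathcal{E}$ for which $\mathcal{E}(\Lambda'^{1/p})$ coincides, inside a fixed algebraic closure, with the compositum $\mathcal{L}\mathcal{E}$, and then to identify the two fields by a degree count. Write $\mathcal{L} \simeq \mathcal{K}(\Lambda^{1/p})$ for a pseudo-basis $\Lambda = \tilde{\mathcal{B}} \cup \{\pi\}$ of $\mathcal{K}$, with $\tilde{\mathcal{B}}$ a lift of a $p$-basis $\mathcal{B}$ of $\kappa$ and $\pi$ a uniformizer of $\mathcal{K}$, and fix once and for all the $p$-th roots $u^{1/p}$ ($u \in \tilde{\mathcal{B}}$) and $\pi^{1/p}$ used to form $\mathcal{L}$. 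Since $[\mathcal{E}:\mathcal{K}]$ is prime to $p$, the field $\mathcal{E}$ is again a complete discrete valued field; its residue field $\kappa_{\mathcal{E}}$ is a finite extension of $\kappa$ of degree prime to $p$, hence separable over $\kappa$ (the inseparable degree being a power of $p$), and the ramification index $e := e(\mathcal{E}/\mathcal{K})$ is prime to $p$.

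I would first record the two consequences of the separability of $\kappa_{\mathcal{E}}/\kappa$ that are needed: a $p$-basis of $\kappa$ remains a $p$-basis of $\kappa_{\mathcal{E}}$, and therefore $\mathscr{R}_p(\kappa_{\mathcal{E}}) = \mathscr{R}_p(\kappa)$, so that $\mathscr{R}_{ps}(\mathcal{E}) = \mathscr{R}_{ps}(\mathcal{K})$. Both follow from the standard fact that for a finite separable extension $L/K$ in characteristic $p$ one has $L = KL^p$ and $L^p$ is linearly disjoint from $K$ over $K^p$. In particular, viewing $\tilde{\mathcal{B}} \subseteq \mathcal{O}_{\mathcal{K}} \subseteq \mathcal{O}_{\mathcal{E}}$, the set $\tilde{\mathcal{B}}$ is a lift of a $p$-basis of $\kappa_{\mathcal{E}}$.

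The heart of the argument is the choice of a uniformizer of $\mathcal{E}$ that absorbs the unit obstruction coming from ramification, exploiting that the definition of a pseudo-basis allows an arbitrary uniformizer. Let $\rho$ be any uniformizer of $\mathcal{E}$ and write $\pi = w\rho^{e}$ with $w \in \mathcal{O}_{\mathcal{E}}^{\times}$. Choosing $a,b \in \mathbb{Z}$ with $ea + pb = 1$ (possible since $\gcd(e,p)=1$), set $\rho' := w^{a}\rho$, which is again a uniformizer of $\mathcal{E}$, and put $\Lambda' := \tilde{\mathcal{B}} \cup \{\rho'\}$, a pseudo-basis of $\mathcal{E}$; thus $\mathcal{E}(\Lambda'^{1/p}) \in \mathfrak{PP}(\mathcal{E})$. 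The element $\sigma := (\pi^{1/p})^{a}\rho^{b} \in \mathcal{L}\mathcal{E}$ satisfies $\sigma^{p} = \pi^{a}\rho^{pb} = (w\rho^{e})^{a}\rho^{pb} = w^{a}\rho^{ea+pb} = \rho'$, so $\sigma$ is a $p$-th root of $\rho'$ lying in $\mathcal{L}\mathcal{E}$; since also $\tilde{\mathcal{B}}^{1/p} \subseteq \mathcal{L} \subseteq \mathcal{L}\mathcal{E}$, we obtain the inclusion $\mathcal{E}(\Lambda'^{1/p}) = \mathcal{E}(\tilde{\mathcal{B}}^{1/p},\sigma) \subseteq \mathcal{L}\mathcal{E}$.

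Finally I would close with a dimension count: by Remark \ref{rmk:degree}, $[\mathcal{E}(\Lambda'^{1/p}):\mathcal{E}] = p^{\mathscr{R}_{ps}(\mathcal{E})} = p^{\mathscr{R}_{ps}(\mathcal{K})} = [\mathcal{L}:\mathcal{K}]$, whereas the elementary inequality $[\mathcal{L}\mathcal{E}:\mathcal{E}] \leq [\mathcal{L}:\mathcal{K}]$ holds for any compositum (a $\mathcal{K}$-basis of $\mathcal{L}$ spans $\mathcal{L}\mathcal{E}$ over $\mathcal{E}$). Together with the inclusion just obtained, this forces $\mathcal{E}(\Lambda'^{1/p}) = \mathcal{L}\mathcal{E}$, hence $\mathcal{L}\mathcal{E} \in \mathfrak{PP}(\mathcal{E})$. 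I expect the only point needing real care to be the preservation of $p$-bases under the separable residue extension $\kappa_{\mathcal{E}}/\kappa$, which I would either cite from standard references on $p$-bases or dispatch by the two-line argument indicated above; everything else is routine once the corrected uniformizer $\rho'$ has been chosen.
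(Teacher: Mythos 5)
Your proposal is correct and follows essentially the same route as the paper's proof: identify $\tilde{\mathcal{B}}$ as a lift of a $p$-basis of the residue field of $\mathcal{E}$ (using that the residue degree and ramification index are prime to $p$), and replace $\sqrt[p]{\pi}$ by the $p$-th root of a uniformizer of $\mathcal{E}$ via the relation $\pi = w\rho^{e}$ with $\gcd(e,p)=1$. The only difference is that you spell out the B\'ezout manipulation and add a closing degree count, where the paper simply asserts $\mathcal{E}(\sqrt[p]{\pi}) = \mathcal{E}(\sqrt[p]{v\delta})$ for a suitable unit $v$.
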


\begin{proof}
Let 
\begin{align*}
\mathcal{L} = \mathcal{K}(\sqrt[p]{\pi}, \sqrt[p]{u_1}, \sqrt[p]{u_2},\cdots, \sqrt[p]{u_n})
\end{align*}
where  $\pi$ is a uniformizer of $\mathcal{K}$ and $\{u_1, u_2, \dots, u_n\}$ is a lift of some $p$-basis of $\kappa$. Then 
\begin{align*}
\mathcal{L} \mathcal{E} = \mathcal{E}(\sqrt[p]{\pi}, \sqrt[p]{u_1}, \sqrt[p]{u_2},\cdots, \sqrt[p]{u_n})
\end{align*}
Let $\mathfrak{e}$, $\delta$  denote  respectively the residue  and a uniformizer of $\mathcal{E}$.  Note that since $[\mathcal{E}:\mathcal{K}]$ is coprime to $p$,  $[\mathfrak{e}:\kappa]$  as well as the ramification index $e_{\mathcal{E}/\mathcal{K}}$ are coprime to $p$. So any $p$-basis of  $\kappa$ is still a $p$-basis of $\mathfrak{e}$  and $\mathcal{E}(\sqrt[p]{\pi}) = \mathcal{E}(\sqrt[p]{u\delta^{ e_{\mathcal{E}/\mathcal{K}}}}) = \mathcal{E}(\sqrt[p]{v\delta})$ for some $u, v \in \mathcal{O}_{\mathcal{E}}^{\times}$.  Since  $v\delta$ is also a uniformizer of $\mathcal{E}$, we conclude  from the above discussion that $\mathcal{L} \mathcal{E} \in \mathfrak{PP}(\mathcal{E})$. 
\end{proof}

\begin{proof}[Proof of Corollary \ref{cor:mainh}]
    Let $\zeta$ denote a primitive $p$-th root of unity. First assume that $\mathcal{K}$ contains $\zeta$.  By the norm residue isomorphism theorem (Theorem \ref{thm:voe}), 
    \begin{align*}
        K_i(\mathcal{K})/p \simeq H^i(\mathcal{K}, \mu_p^{\otimes i}) \simeq H^2(\mathcal{K}, \mu_p)\simeq H^i(\mathcal{K}, \mu_p^{\otimes i-1}) 
    \end{align*}
    By the functoriality of the Galois symbol and by Theorem \ref{thm:main}, it is easy to see that the natural restriction map
\begin{align}\label{eqn:res}
Res^i_{\mathcal{L}/\mathcal{K}}: H^i(\mathcal{K}, \mu_p^{\otimes i-1}) \rightarrow H^i(\mathcal{L}, \mu_p^{\otimes i-1})
\end{align}
is zero.  Now suppose that  $\mathcal{K}$ does not contain $\zeta$. The composite 
\begin{align*}
    H^i(\mathcal{K}, \mu_p^{\otimes i-1}) \rightarrow  H^i(\mathcal{K}(\zeta) ,\mu_p^{\otimes i-1})  \rightarrow H^i(\mathcal{L}(\zeta), \mu_p^{\otimes i-1})
\end{align*}
    is zero by (\ref{eqn:res}) and Lemma \ref{lem:unity}.  Since $[\mathcal{K}(\zeta):\mathcal{K}]$ is coprime to $p$ and $[\mathcal{L}:\mathcal{K}] = p^{n+1}$,  we have $[\mathcal{L}\mathcal{K}(\zeta):K] = [\mathcal{L}:K][\mathcal{K}(\zeta):K]$. Therefore, $\mathcal{L}$ and $\mathcal{K}(\zeta)$ are linearly disjoint over $\mathcal{K}$ (\cite[Corollary 2.5.2]{fried}). So
    \begin{align*}
        \mathcal{L}(\zeta) = \mathcal{L} \otimes_{\mathcal{K}} \mathcal{K}(\zeta)
    \end{align*}
    and the composition
    \begin{align*}
        H^i(\mathcal{K}, \mu_p^{\otimes i-1}) \xrightarrow{Res_{\mathcal{L}/\mathcal{K}}} H^i(\mathcal{L}, \mu_p^{\otimes i-1}) \rightarrow H^i(\mathcal{L}(\zeta), \mu_p^{\otimes i-1})
    \end{align*}
    is zero.  But $[\mathcal{L}(\zeta): \mathcal{L}]$ is coprime to $p$, so  the second arrow is injective by a standard restriction corestriction argument.  Therefore $Res_{\mathcal{L}/\mathcal{K}} =0$.
\end{proof}

The bound for Brauer $p$-dimension of $\mathcal{K}$  (Conjecture \ref{conj:main}) easily follows from the above discussions:
\begin{align*}
   Br_pdim(\mathcal{K}) \leq \mathscr{R}_{ps}(\mathcal{K})
    \end{align*}

\subsection{Period-index bounds for complete regular local rings}\label{sec:higher}
Throughout this section, $\mathcal{K}$ denotes the fraction field of a complete regular local ring $\mathcal{R}$ with residue field $\kappa$ of characteristic $p$. Let $\{u_1, u_2, \cdots, u_n\}$ denote a $p$-basis of $\kappa$.  In this section, we prove Theorem \ref{thm:ndim}.\\
\indent The proof  is a generalization of  the proof of \cite[Proposition 3.5, Theorem 3.6]{ps14} where the corresponding result is shown for $2$-dimensional complete regular local rings. However there is a  gap in the proof of \cite[Proposition 3.5]{ps14} which was pointed out by the author to V. Suresh who later sketched out a way to fix the gap.  We provide a proof of Theorem \ref{thm:ndim} using these ideas.  To prove the theorem, we need the following lemma.
\begin{lemma} \label{lem:trivial}
    Let $\mathcal{L}$ be the fraction field of a complete regular  local ring $\mathcal{S}$ with maximal ideal $\mathfrak{m}$. Let $x\in Spec(\mathcal{S})$ be a regular prime and $\mathcal{L}_x$ be  the fraction field of the completion $\hat{\mathcal{S}}_x$ of $\mathcal{S}_x$. Suppose $\alpha \in Br(\mathcal{L})$ is unramified  on $\mathcal{S}$ and  is trivial over $\mathcal{L}_x$. Then $\alpha$ is trivial.
\end{lemma}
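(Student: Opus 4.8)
The plan is to reduce the triviality of $\alpha$ over $\mathcal{L}$ to a purity statement, using the completion at $x$ as a detection device. First I would observe that since $\alpha$ is unramified on $\mathcal{S}$, by Theorem \ref{thm:purity} (purity for Brauer groups) it lies in $H^2_{\text{\'et}}(\mathcal{S},\mathbb{G}_m) = \bigcap_{y\in \operatorname{Spec}(\mathcal{S})^{(1)}} H^2_{\text{\'et}}(\mathcal{O}_{\mathcal{S},y},\mathbb{G}_m)$, viewed inside $Br(\mathcal{L})$; in particular it comes from a class $\tilde{\alpha}\in Br(\mathcal{S})$. Since $\mathcal{S}$ is a complete (hence Henselian) regular local ring, the restriction $Br(\mathcal{S})\to Br(\kappa)$ to the residue field is an isomorphism (for complete local rings the Brauer group agrees with that of the residue field). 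So it suffices to show that the image of $\tilde{\alpha}$ in $Br(\kappa)$ is zero, or equivalently that $\tilde\alpha = 0$ in $Br(\mathcal{S})$.

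Next I would use the hypothesis that $\alpha$ is trivial over $\mathcal{L}_x$, the fraction field of the completion $\hat{\mathcal{S}}_x$ of the localization at the regular prime $x$. The point is that $\mathcal{S}_x$ is a regular local ring and $\tilde\alpha$ restricts to an unramified class on $\mathcal{S}_x$ (being unramified on all of $\mathcal{S}$), hence to a class in $Br(\mathcal{S}_x)$; passing to the completion gives a class in $Br(\hat{\mathcal{S}}_x)$ whose image in $Br(\mathcal{L}_x)$ is $\alpha\otimes \mathcal{L}_x = 0$. Since $\hat{\mathcal{S}}_x$ is a regular local ring with fraction field $\mathcal{L}_x$, the map $Br(\hat{\mathcal{S}}_x)\to Br(\mathcal{L}_x)$ is injective (Auslander--Goldman / purity in dimension one, or directly because $Br(\hat{\mathcal S}_x)\hookrightarrow Br(\kappa(x))$ and the latter injects appropriately). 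Therefore $\tilde\alpha$ restricts to zero on $\hat{\mathcal{S}}_x$, and hence on $\mathcal{S}_x$, which means the residue $\tilde\alpha(x)\in Br(\kappa(x))$ of $\tilde\alpha$ at the residue field of $x$ vanishes.

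The final step is to promote vanishing at the single point $x$ to vanishing on all of $\mathcal{S}$. Here is where I expect the main obstacle: one needs $\kappa(x)$ (the residue field at $x$) to be closely enough related to $\kappa$ (the residue field at the closed point) that triviality over $\kappa(x)$ forces triviality over $\kappa$. The intended mechanism is specialization along $\mathcal{S}\twoheadrightarrow \mathcal{S}/x$, which is again a complete regular local ring with the \emph{same} residue field $\kappa$: the composite $Br(\mathcal{S})\to Br(\mathcal{S}/x)\xrightarrow{\sim} Br(\kappa)$ is the isomorphism above, while $Br(\mathcal{S})\to Br(\mathcal{S}/x)$ also factors through $Br(\mathcal{S}_x)\to Br(\kappa(x))$ once one knows the class is unramified. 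So the vanishing of $\tilde\alpha$ in $Br(\kappa(x))$, together with compatibility of the two specialization maps, forces $\tilde\alpha = 0$ in $Br(\kappa)\cong Br(\mathcal{S})$, whence $\alpha = \tilde\alpha\otimes\mathcal{L} = 0$. The delicate point to get right is the commutativity of the diagram relating the specialization $\mathcal S \to \mathcal S/x$, the localization-completion $\mathcal S \to \hat{\mathcal S}_x$, and the residue maps to $Br(\kappa)$ and $Br(\kappa(x))$; this is precisely the kind of compatibility whose absence caused the gap in \cite[Proposition 3.5]{ps14}, so I would state it carefully as a separate claim and verify it via the standard behaviour of residue maps under blowing up / the Gysin sequence, following the fix suggested by V. Suresh.
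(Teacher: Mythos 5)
Your proposal is correct and follows essentially the same route as the paper: lift $\alpha$ to a class on $\mathcal{S}$ via unramifiedness, use Auslander--Goldman injectivity of $Br(\hat{\mathcal{S}}_x)\hookrightarrow Br(\mathcal{L}_x)$ to kill it over $\hat{\mathcal{S}}_x$ and hence over its residue field $\kappa(x)=\mathrm{Frac}(\mathcal{S}/(x))$, then use regularity of $\mathcal{S}/(x)$ and the isomorphism $Br(\mathcal{S})\cong Br(\kappa)$ for the complete local ring to conclude. The ``delicate compatibility'' you flag at the end is just functoriality of the Brauer group along the two factorizations of $\mathcal{S}\to\kappa(x)$, so no separate claim is needed.
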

\begin{proof}
    Since $\alpha \in Br(\mathcal{L})$ is unramified on $\mathcal{S}$, there exists  $\mathfrak{A} \in Br(\mathcal{S})$ corresponding to the class of  an Azumaya algebra over $\mathcal{S}$ such that $\mathfrak{A} \otimes_\mathcal{S} \mathcal{L} = \alpha$. Since $\mathfrak{A} \otimes_\mathcal{S} \mathcal{L}_x$ is trivial and $\hat{\mathcal{S}}_x$ is a complete discrete valued ring, by \cite[Theorem 7.2]{aus_goldman},  $\mathfrak{A} \otimes_\mathcal{S} \hat{\mathcal{S}}_x$  is trivial. Therefore $\mathfrak{A} \otimes_\mathcal{S} \mathcal{S}_x/(x)$  is trivial. Since $\mathcal{S}_x /(x)$ is isomorphic to the fraction field of $\mathcal{S}/(x)$, and  $\mathcal{S}/(x)$  is a   regular local ring (\cite[\href{https://stacks.math.columbia.edu/tag/00NQ}{Tag 00NQ}]{stacks-project}), $\mathfrak{A}\otimes_\mathcal{S} \mathcal{S}/(x)$ is trivial (\cite[Theorem 7.2]{aus_goldman}). This implies that, $\mathfrak{A} \otimes_\mathcal{S} \mathcal{S}/\mathfrak{m}$ is trivial. But $\mathcal{S}$ is a complete regular local ring and therefore by \cite[Theorem 2.2]{saltman_br_p}, $\mathfrak{A}$ is trivial. Hence  $\alpha$ is trivial as well.
\end{proof}

\begin{proof} [Proof of Theorem \ref{thm:ndim}]
    Let $\alpha \in Br(\mathcal{K})[p]$ be such that it is ramified at most at some regular system of parameters $\Pi =\{\pi_1, \pi_2, \cdots, \pi_m\}$ of $\mathcal{K}$.  Let $\Lambda = \tilde{\mathcal{B}} \cup \Pi$ be a pseudo-basis of $\mathcal{K}$ where $\tilde{\mathcal{B}}$ is a lift of some $p$-basis $\mathcal{B}$ of $\kappa$. Let
    \begin{align*}
        \mathcal{L} = \mathcal{K}(\Lambda^{1/p}) \in \mathfrak{PP}(\mathcal{K})
    \end{align*}
   
    We need to show that $\alpha \otimes_{\mathcal{K}} \mathcal{L}$ is trivial. \\
    \indent Let $\mathcal{S} = \mathcal{R}[\Lambda^{1/p}]$. Then $\mathcal{S}$ is a complete regular local ring (\cite[Corollary 3.3]{ps14}, \cite[Theorem 7]{cohen_structure}),    with residue field $\kappa(\mathcal{B}^{1/p})$ and fraction field $\mathcal{L}$.   We now show that $\alpha$ is unramified over $\mathcal{S}$. Note that if $\mathcal{P}$ is a height one prime ideal in $\mathcal{R}$ that is not $(\pi_i)$ for some $i$,  then  by hypothesis $\alpha$ is unramified  at $\mathcal{P}$ and hence unramified over any height one prime ideal in $\mathcal{S}$ lying above $\mathcal{P}$.  We now divide the  proof into two cases: 
    \begin{enumerate}[(i)]
        \item Suppose for every $1\leq i \leq m$, $\mathcal{R}/(\pi_i)$ is of characteristic $\neq p$. In this case, $\mathcal{R}/(\pi_i)$  is necessarily of characteristic zero.  Let $\mathcal{L}' = \mathcal{K}(\Pi^{1/p})$. Note that  $\alpha \otimes_{\mathcal{K}} \mathcal{L}'$ is unramified at each $(\pi_i^{1/p})$  (\cite[Lemma 3.4]{ps14}). Let
        \begin{align*}
            \mathcal{S}' \simeq \mathcal{R}[\Pi^{1/p}]
        \end{align*}
         By purity for Brauer groups (Theorem \ref{thm:purity}), $\alpha \otimes_{\mathcal{K}} \mathcal{L}'$ is unramified on $S'$. Since $S'$ is a complete regular local ring with residue $\kappa$ (\cite[Corollary 3.3]{ps14}), it follows from  \cite[Theorem 7.2]{aus_goldman} that,
        \begin{align} \label{eqn:residue}
            Br(\mathcal{S}') \simeq Br(\kappa)
        \end{align}
         Since $Br(\kappa)[p]$ is generated by cyclic $p$-algebras (\cite[Chapter IV, Theorem 4.2.17]{jacobson_fin_dim}), the natural restriction
        \begin{align*}
            Br(\kappa)[p] \rightarrow Br(\kappa(\mathcal{B}^{1/p}))[p]
        \end{align*}
        is zero. But $\mathcal{S}$ is a complete regular local ring with residue field $\kappa(\mathcal{B}^{1/p})$, so by the naturality of the isomorphism in (\ref{eqn:residue}) with respect to restriction maps, we see that the restriction
        \begin{align*}
            Br(\mathcal{S}') \rightarrow Br(\mathcal{S})
        \end{align*}
        is zero. Hence $\alpha \otimes_{\mathcal{K}} \mathcal{L}$ is trivial.
        \item Suppose $\mathcal{R}/(\pi_i)$  is of characteristic $p$ for some $i$.   Without loss of generality, assume that $i=1$. Then $\mathcal{R}/(\pi_j)$ is necessarily of characteristic zero for $j \neq  1$. Let $\mathcal{Q}'$ be a height one prime ideal in $\mathcal{S}$ above $(\pi_j)$. Since $\mathcal{L}/\mathcal{K}$ is ramified over $(\pi_j)$ and $\mathcal{R}/(\pi_j)$ is of characteristic zero, $\alpha \otimes_{\mathcal{K}} \mathcal{L}$ is unramified at $\mathcal{Q}'$ (\cite[Lemma 3.4]{ps14}).  By \cite[\href{https://stacks.math.columbia.edu/tag/00NQ}{Tag 00NQ}]{stacks-project}, $\mathcal{R}/(\pi_1)$ is a complete regular local ring with  $\{\pi_2, \cdots,  \pi_m\}$ as a regular system of parameters and residue $\kappa$. Since  $\mathcal{R}/(\pi_1)$ is a $(p,p)$-equicharacteristic  complete  Noetherian local ring, by Cohen's structure theorem $\mathcal{R}/(\pi_1) \simeq \kappa[[\overline{\pi}_2, \cdots,  \overline{\pi}_m]]$ where $\overline{\pi}_k$ is the image of $\pi_k$ in $\mathcal{R}/(\pi_1)$.  Let $\kappa'$ be  the fraction field of $\mathcal{R}/(\pi_1)$. Then $\kappa'\simeq \kappa(( \overline{\pi}_2, \cdots,  \overline{\pi}_m))$ and $\{\overline{u}_1, \overline{u}_2,\cdots, \overline{u}_n, \overline{\pi}_2, \cdots,  \overline{\pi}_m\}$  form a $p$-basis of $\kappa'$.   Now  let $\mathcal{Q}$ be the height one prime ideal in $\mathcal{S}$ above $\pi_1$. Let $\mathcal{K}_{(\pi_1)}$  be the fraction field of  the completion $\hat{\mathcal{R}}_{(\pi_1)}$ and  let $\mathcal{L}_{\mathcal{Q}}$  be the fraction field of the completion $\hat{\mathcal{S}}_{(\mathcal{Q})}$.  We have 
        \begin{align*}
        \mathcal{L}_{\mathcal{Q}} \simeq \mathcal{K}_{(\pi_1)} (\Lambda^{1/p}) 
        \end{align*}
        Since $\mathcal{L}_{\mathcal{Q}}$ is a complete discrete   valued field  and the  residue field of  $\mathcal{K}_{(\pi_1)}$ is  $\kappa'$, $\mathcal{L}_{\mathcal{Q}} \in \mathfrak{PP}(\mathcal{K}_{(\pi_1)})$. Therefore, by Corollary \ref{cor:mainh}, $\alpha \otimes_{\mathcal{K}} \mathcal{L}_{\mathcal{Q}}$ is trivial. By \cite[Lemma 3.4]{ps14}, $\alpha \otimes_{\mathcal{K}} \mathcal{L}$ is unramified at $\mathcal{Q}$. Now  the above arguments together with the purity result for Brauer groups  (Theorem \ref{thm:purity}),  implies that $\alpha \otimes_{\mathcal{K}} \mathcal{L}$ is unramified on $\mathcal{S}$. By  taking $x=\mathcal{Q}$ in Lemma \ref{lem:trivial}, we deduce that $\alpha \otimes_{\mathcal{K}} \mathcal{L}$ is trivial.
        \end{enumerate}
\end{proof}

\begin{question}
    Let $\mathcal{K}$ be  the fraction field of a complete regular local ring  of characteristic zero  and dimension at least two with residue field  of characteristic $p \neq 2$. Suppose $\mathcal{L} \in \mathfrak{PP}(\mathcal{K})$. Is the natural restriction map
  \begin{align*}
Res_{\mathcal{L}/\mathcal{K}}^2: K_2(\mathcal{K})/p \rightarrow K_2(\mathcal{L})/p
\end{align*}
zero?
\end{question}

\subsection{Brauer $p$-dimension of semi-global fields}\label{sec:semiglobal}
In this section, $\mathcal{K}$ is a complete discrete valued field of characteristic zero with residue $\kappa$ of characteristic $p$. Assume that  $p=char~\kappa \neq2$ or that $H^2_{\acute{e}t}(\kappa,\mathbb{Z}/2(1))=0$ holds. We  prove Theorem \ref{thm:semiglobal} that if  $F$ is a semi-global field over $\mathcal{K}$ i.e., the function field of a curve over $\mathcal{K}$, then   
\begin{align*}
    Br_pdim(F) \leq \mathscr{R}_{ps}(\mathcal{K})+1
\end{align*}
The proof involves applying Theorem \ref{thm:main}, Theorem \ref{thm:ndim} and  patching techniques of  Harbater, Hartmann and Krashen (\cite{hhk09}).
\begin{proof}[Proof of Theorem \ref{thm:semiglobal}]
    The proof goes similar to \cite[Theorem 3.6]{ps14} with appropriate changes. Let $n=\mathscr{R}_p(\kappa)$. First, note that  since the transcendence degree of any finite extension of $F$ is one, it suffices to show that for any division algebra $D$ over $F$, $ind(D)\mid per(D)^{n+2}$. Moreover, by \cite[Corollary 1.1]{ps14}, we may assume that the period of $D$ is $p$. Since the $p$-rank of a field does not change under finite extensions, by replacing $\mathcal{K}$ by a finite extension, we may assume that $F$ is the function field of geometrically integral smooth projective curve over $\mathcal{K}$. \\
     \indent We now recall the standard patching setup  of \cite{hhk09}. Let $\pi$ denote a uniformizer of $\mathcal{O}_{\mathcal{K}}$. After sufficiently many blow-ups one can obtain a   regular proper model $\mathcal{X}$ of $F$ over $\mathcal{O}_{\mathcal{K}}$ such that  the support of the ramification divisor of $[D]$ and the components of the reduced special fiber are a union of regular curves with normal crossings on $\mathcal{X}$. For a closed point $P \in \mathcal{X}$, let $F_P$ denote the fraction field of the completion $\hat{\mathcal{O}}_{\mathcal{X},P}$. Let $\mathcal{X}_0$ be the special fiber of $\mathcal{X}$.  For an open subset $U$ of an irreducible component of $\mathcal{X}_0$, let $R_U$ denote the ring of regular functions on $U$. Since $\mathcal{O}_{\mathcal{K}}\subset R_U$, we can take the $\pi$-adic completion $\hat{R}_U$ of $R_U$. Denote the fraction field of $\hat{R}_U$ by $F_U$. For  an irreducible component $\eta$  of $\mathcal{X}_0$ let $F_{\eta}$ denote the completion of $F$ at the discrete valuation given by $\eta$. \\
     \indent Since the residue field of $F_{\eta}$ is of  transcendence degree one over $\kappa$, its $p$-rank is $n+1$. Therefore, by Corollary \ref{cor:mainh}, $ind(D \otimes_F F_{\eta})  \mid p^{n+2}$.  By \cite[Proposition 5.8]{hhk15} and \cite[Proposition 1.17]{boi}, there exists an irreducible open set $U_{\eta}$ of $\mathcal{X}_0$ containing $\eta$ such that $ind(D \otimes_F F_{U_{\eta}}) = ind(D \otimes_F F_{\eta})$ and hence $ind(D \otimes_{F} \mathcal{F}_{U_{\eta}}) \mid p^{n+2}$. Let $S_0$ be a finite set of closed points of $\mathcal{X}$ containing all points of intersection of components of $\mathcal{X}_0$ and the support of the ramification divisor of $D$. Let $S$ be a finite set of closed points of $\mathcal{X}$ containing $S_0$ and $\mathcal{X}_0\setminus (\bigcup U_{\eta})$ where $\eta$ varies  over generic points of $\mathcal{X}_0$. Then by \cite[Theorem 5.1]{hhk09}, 
     \begin{align}\label{eqn:ind}
         ind(D) = l.c.m\{ind(D\otimes_F F_{\zeta})\}
     \end{align}
     where $\zeta$ runs over $S$ and irreducible components of $\mathcal{X}_0 \setminus S$.\\
     \indent If $\zeta = U$ for some irreducible component $U$ of $\mathcal{X}_0\setminus S$, then $U \subset U_{\eta}$ and $F_{U_{\eta}} \subset F_U$, where  $\eta$ is the generic point of $U$.  Hence $ind(D\otimes _{\mathcal{F}} F_U) \mid p^{n+2}$.\\
     \indent On the other hand, if $\zeta = P \in S$,  the local ring $\mathcal{O}_P$  at $P$ is regular of dimension two and  by the construction of $\mathcal{X}$,  the ramification locus of $[D]$ is a subset of regular system of parameters of $\mathcal{O}_P$. Since the residue field of $\mathcal{O}_P$ is a finite extension of $\kappa$, its $p$-rank is also equal to $n$. Hence, Theorem \ref{thm:ndim} implies $ind(D\otimes_{F} F_P) \mid p^{n+2}$.  Finally, from   equation (\ref{eqn:ind}),  we get $ind(D) \mid p^{n+2}$ as required.
     \end{proof}

\subsection{Uniform bounds for semi-global fields}\label{sec:uniform}
\indent  We keep the notations in \S\ref{sec:semiglobal}. In this section, we prove Theorem \ref{thm:uniform} that 
\begin{align*}
    gssd_p^2(F)\leq 2(\mathscr{R}_{ps}(\mathcal{K})+1)
\end{align*}
 and that $F$ is uniformly $(2,p)$-bounded with  $[F(\zeta):F]p^{2(\mathscr{R}_{ps}(\mathcal{K})+1)}$  a $(2,p)$-uniform bound of $F$ (where $\zeta$ is a primitive $p$-th root of unity). The proof is similar to that in \cite[Theorem 2.4]{ps_uniform} with appropriate changes.

\begin{proof}[Proof of Theorem \ref{thm:uniform}]
    Since any finite extension of a semi-global field is semi-global, it suffices to show that for any finite set $S=\{\alpha_1, \alpha_2,\cdots, \alpha_m\} \in  Br(F)[p]$, $ind(S) | p^{2(\mathscr{R}_{ps}(\mathcal{K})+1)}$. As before, choose a regular proper model $\mathcal{X}$ of $F$ over $\mathcal{O}_{\mathcal{K}}$ such that the support of the ramification divisors of $\alpha_i$ for all $i$ and the special fiber is contained in $C\cup E$ where $C$ and $E$ are regular (not necessarily irreducible) curves on $\mathcal{X}$ with only normal crossings.\\
\indent Let $n=\mathscr{R}_p(\kappa)$. For any generic point $\eta$ of an irreducible component of the special fiber $\mathcal{X}_0$ of $\mathcal{X}$,   the residue  field $\kappa(\eta)$ of $F_{\eta}$ is of transcendence degree one  over $\kappa$ and therefore its $p$-rank is $n+1$. Let $\pi_{\eta}$ denote a uniformizer of $F_{\eta}$ and let $u_{\eta,1}, \dots, u_{\eta,{n+1}}$  denote a lift of some $p$-basis of $\kappa(\eta)$.  By the Chinese remainder theorem, there exists $f, u_1, \cdots, u_{n+1} \in F^{\times}$ such that $f$ is a uniformizer at each $\eta$ and $\overline{u}_j = \overline{u}_{\eta, j}, \forall \eta, j $.  By Corollary \ref{cor:mainh}, 
\begin{align}\label{eqn:eta}
\alpha_j \otimes F_{\eta}(\sqrt[p]{f}, \sqrt[p]{u_1}, \cdots, \sqrt[p]{u_{n+1}}) =0, \forall j
\end{align}
By \cite[Proposition 5.8]{hhk15} and \cite[Proposition 1.17]{boi}, there exists an irreducible open set $U_{\eta}$ of $\mathcal{X}_0$ containing $\eta$ such that 
\begin{align}\label{eqn:ueta}
\alpha_j \otimes F_{U_{\eta}}(\sqrt[p]{f}, \sqrt[p]{u_1}, \cdots, \sqrt[p]{u_{n+1}}) =0, \forall j
\end{align}
Let  $\mathscr{P}$ denote the collection  of all closed points in $\mathcal{X}$ that are not in $U_{\eta}$ for any $\eta$. Let $A$ be the semi-local ring at $\mathscr{P}$ and for $P\in \mathscr{P}$, let $A_P$ denote the local ring at $P$. By our choice of $\mathcal{X}$, each $\alpha_i$ is ramified at most at $\pi_P, \delta_P$ where $\mathfrak{m}_P = (\pi_P, \delta_P)$ is the maximal ideal at $P$. Note that the $p$-rank of the residue field  at $P$ is $n$.  By the Chinese remainder theorem, there exists $v_1, \cdots, v_n \in A$ such that  their residues  at any $P$ form a $p$-basis.  Now by \cite[Lemma 2.3]{ps_uniform},  there exists $g_1, g_2 \in F^{\times}$ such that for every $P\in\mathscr{P}$, $\mathfrak{m}_P=(g_1, g_2)$ and $g_1$  defines $C$ at all $P \in \mathscr{P}\cap C$ and $g_2$ defines $E$ at all $P\in \mathscr{P} \cap E$.  So each $\alpha_j$ is ramified at most at $g_1$ and $g_2$ at  each $P$. By Theorem \ref{thm:ndim}, we conclude that for any $P\in \mathscr{P}$,
\begin{align}\label{eqn:closedP}
\alpha_j \otimes  F_{P}(\sqrt[p]{g_1}, \sqrt[p]{g_2}, \sqrt[p]{v_1}, \cdots, \sqrt[p]{v_{n}}) =0, \forall j
\end{align}
Let  
\begin{align*}
  L:= F(\sqrt[p]{f}, \sqrt[p]{u_1}, \cdots, \sqrt[p]{u_{n+1}}, \sqrt[p]{g_1}, \sqrt[p]{g_2}, \sqrt[p]{v_1}, \cdots, \sqrt[p]{v_{n}})  
\end{align*}
Note that $[L:F]= p^{2(\mathscr{R}_{ps}(\mathcal{K})+1)}$. Consider the normal closure $\overline{\mathcal{X}}$  of $\mathcal{X}$ in $L$. Let $\phi: \mathcal{Y} \rightarrow \mathcal{X}$ be a proper birational morphism such that $\mathcal{Y}$ is regular. Let $y$ be a point in the special fiber of $\mathcal{Y}$.  To prove the first part of the theorem, it suffices to show that $\alpha_j \otimes L_y =0,  \forall j$ where $y$ is any point in the special fiber of $\mathcal{Y}$ (\cite[Theorem 9.12]{hhk_torsors}).  There are three possibilities:
\begin{enumerate}[(i)]
    \item Suppose $\phi(y) = \eta$ is a generic point of an irreducible component, then by the choice of $L$, $F_{\eta}(\sqrt[p]{f}, \sqrt[p]{u_1}, \cdots, \sqrt[p]{u_{n+1}}) \subseteq L_y$. Therefore,  by (\ref{eqn:eta})$, \alpha_j \otimes L_y =0, \forall j$.
    \item Suppose $\phi(y) = P$ is a closed point and $P\in U_{\eta}$ for some $\eta$, then $F_{U_{\eta}} \subset F_P$. Therefore, we have $F_{U_{\eta}}(\sqrt[p]{f}, \sqrt[p]{u_1}, \cdots, \sqrt[p]{u_{n+1}}) \subseteq F_{P}(\sqrt[p]{f}, \sqrt[p]{u_1}, \cdots, \sqrt[p]{u_{n+1}}) \subseteq L_y $ and by (\ref{eqn:ueta}), $\alpha_j \otimes L_y =0, \forall j$. 
    \item Suppose $\phi(y) = P$ is a closed point and $P\notin U_{\eta}$ for any $\eta$. Then we have $P \in \mathscr{P}$. Since $F_P(\sqrt[p]{g_1}, \sqrt[p]{g_2}, \sqrt[p]{v_1}, \cdots, \sqrt[p]{v_{n}}) \subseteq L_y$, $\alpha_j \otimes L_y =0$ by (\ref{eqn:closedP}).
\end{enumerate}
    This proves the first part of the theorem. For the second part of the theorem, we note that  for any finite extension $E/F$, $E(\zeta)$ is a semi-global field and the isomorphism $H^2(E(\zeta), \mu_p^{\otimes 2}) \cong  H^2(E(\zeta), \mu_p)$ yields  uniform $(2,p)$-boundedness as well as  the corresponding bound from the first part of the claim.
    \end{proof}

\section*{Acknowledgments}
The author acknowledges the support of the DAE, Government of India, under Project Identification No. RTI4001. She is grateful to David Harbater, R. Parimala and V. Suresh for the many useful discussions and for their feedback on the manuscript.

\nocite*{}
\bibliographystyle{alpha}
\bibliography{ref}

\end{document}